\numberwithin{equation}{section}
\newtheorem{theorem}{Theorem}[section]
\newtheorem{corollary}{Corollary}[section]
\newtheorem{lemma}{Lemma}[section]
\newtheorem{proposition}{Proposition}[section]
\newtheorem{remark}{Remark}[section]
\newtheorem{assumption}{Assumption}[section]
\newtheorem{definition}{Definition}[section]
\newtheorem{assumptions}{Assumptions}[section]
\title[Stationary Singular Control MFGs under Uncertainty]{Stationary Mean-Field Games of Singular Control under Knightian Uncertainty}
\date{\today}
\author[Ferrari]{Giorgio Ferrari $^{\ast,1}$}
\email[Ferrari]{\href{mailto:giorgio.ferrari@uni-bielefeld.de}{$^{1}$giorgio.ferrari@uni-bielefeld.de}}
\author[Tzouanas]{Ioannis Tzouanas $^{\ast,2}$}
\email{\href{mailto:ioannis.tzouanas@uni-bielefeld.de}{$^{2}$ioannis.tzouanas@uni-bielefeld.de}}
\address{$^{\ast}$Center for Mathematical Economics (IMW), Bielefeld University, Universit\"atsstrasse 25, 33615, Bielefeld, Germany.}
\begin{document}

\begin{abstract}
 In this work, we study a class of stationary mean-field games of singular stochastic control under model uncertainty. The representative agent adjusts the dynamics of an It\^o diffusion via one-sided singular stochastic control, aiming to maximize a long-term average reward criterion. The mean-field interaction is of scalar type through the stationary distribution of the population. Due to the presence of uncertainty, the problem involves the study of a stochastic zero-sum game, where the decision maker chooses the best singular control policy, while the adversarial player selects the worst probability measure. Using a constructive approach, we prove existence and uniqueness of a stationary mean-field equilibrium. Finally, we provide a stylized numerical benchmark of dirty-capacity reduction under ambiguity and analyze the impact of uncertainty on the mean-field equilibrium.
\end{abstract}

\maketitle

\subsection*{Keywords:} stationary mean-field games; singular control; model uncertainty; ergodic criterion; free-boundary problem; shooting method.
\subsection*{MSC subject classification:} 49L20, 91A07, 91A15, 91A16, 91A26, 9J15, 60J70, 35R35.


\section{Introduction}
Mean-field games (MFGs in short) were independently introduced in 2006 by Lasry and Lions \cite{Lions}, and by Caines et al.\ \cite{Caines2006LargePS}, as asymptotic models for symmetric \emph{N}-player differential games. In these settings, each player's dynamics and decisions are influenced by the collective behavior of the population, typically represented by the empirical distribution of the states (and potentially the actions) of all players (extended MFGs). The central idea of MFGs is to replace the complex many-player interaction with the problem of a single representative agent who optimizes her strategy in response to a given flow of probability measures, reflecting the statistical distribution of the other, indistinguishable agents.\ The equilibrium concept in MFGs emerges as a consistency condition: The law of the optimally controlled state process of the representative agent must coincide with the prescribed flow of distributions.\ In this way, the classical Nash equilibrium from the \emph{N}-player game is replaced by a fixed-point requirement on the evolution of distributions.\ Since their introduction, MFGs have garnered substantial attention in both the mathematical and applied communities.\ This is due to their analytical tractability, their deep connections with the theory of propagation of chaos and forward-backward stochastic systems, and their capacity to approximate $\varepsilon_{N}$-Nash equilibria in large, symmetric \emph{N}-player games. For a comprehensive treatment of the theory, methodologies, and main results, we refer the reader to the two-volume monograph by Carmona and Delarue \cite{carmona2018probabilistic}. A detailed overview of applications of MFGs in fields such as Economics, Finance, and Engineering can be found in the survey by Carmona \cite{carmona2020applications}.

In stationary MFGs, the representative player interacts with the long-run distribution of the population. Such a concept has a long tradition in economic theory: Stationary equilibria appeared already in the 1980s in the context of games with a continuum of players (see \cite{Hopeynan} and \cite{JOVANOVIC198877}), and also play an important role in the analysis of competitive market models with heterogeneous agents (see, e.g., \cite{PDEinMacro} and \cite{Luttmer}, amongst many others). Closely connected is also the concept of stationary oblivious equilibria, introduced by Adlakha et al.\ in  \cite{ADLAKHA2015269}.
Within the mathematical literature, stationary MFGs have been approached both via analytic and probabilistic methods. Among those papers adopting a partial differential equations (PDE) approach, we refer to the works of Bardi and Feleqi \cite{bardi2016nonlinear} for the study of the forward-backward system arising in stationary MFGs with regular controls, Gomes et al.\ \cite{GOMES201449} for extended stationary MFGs, Cardaliaguet and Porretta \cite{cardaliaguetprotera} for the study of the long-term behavior of the master equation arising in MFG theory, and to Bertucci \cite{BERTUCCI2018165} for the study of stationary mean-field optimal stopping games. On the other hand, a probabilistic approach is followed in a series of recent contributions dealing with stationary MFGs with singular and impulsive controls, see A\"id et al.\ \cite{Ferrari-Basei}, Cannerozzi and Ferrari \cite{cannerozzi2024cooperation}, Cao and Guo \cite{CAO2022995}, Cao et al.\ \cite{CaoFerrariDianetti} and Dianetti et. al. \ \cite{dianetti2023ergodic}.

The aforementioned problems are based on the assumption that agents possess complete certainty regarding the occurrence of system events-that is, the real-world probability measure is perfectly known to them. However, this assumption is unrealistic, as economic and financial models often involve complex mechanisms and multiple sources of uncertainty. A well-known concept that addresses this issue is Knightian uncertainty \cite{knight2002risk} (also referred to as model uncertainty), which describes situations in which the decision maker has incomplete knowledge about the probabilities of various outcomes. To account for this, the concept of ambiguity has been introduced, wherein the decision maker evaluates her objective function by minimizing it over a set of plausible probability measures, commonly referred to as the set of priors. In this context, Gilboa and Schmeidler \cite{GilboaSchmeidler} proposed the max-min expected utility framework, in which the agent maximizes expected utility with respect to the worstcase prior within a suitable set. This approach was further extended by Hansen and Sargent \cite{HansenSargent}, who developed a continuous-time version of the max-min expected utility framework and explored its connection to robust control theory. Over the past two decades, optimization problems under model uncertainty have played a significant role in economics and finance.\ A detailed literature review falls outside the scope of this paper.\ We only would like to mention papers that deal with optimal timing and singular control problems under uncertainty. Among these, optimal stopping problems under ambiguity have been studied by Nutz and Zhang \cite{NutzZhang}, Riedel \cite{RiedelOptimalStoppingMultiplePriors}, and Riedel and Cheng \cite{RiedelChengOptimalStoppingUnderAmbiguity}, among others. For singular control problems, we refer to the works of Chakraborty et al.\ \cite{ChakrabortyCohenYoung}, Cohen \cite{CohenQueueingModelUncertainty}, Cohen et al.\ \cite{CohenHening} Ferrari et. al.\ \cite{FerrariLiRiedel} and Ferrari et al.\ \cite{Ferrari_Li_Riedel_2022}, while Perninge \cite{perninge2023non} examines impulse control problems under model uncertainty.

\subsection{Our Results} In this paper, we study a class of stationary MFGs under model uncertainty, where the underlying state process is a general singularly controlled one-dimensional diffusion. More precisely, the representative agent optimally controls a real-valued It\^o-diffusion through a one-sided singular control in order to maximize an ergodic reward functional while is uncertain about the \textit{real-world model}. To take into account \textit{model uncertainty}, agent maximizes a long-time-average of the time-integral of a running profit function, net of the proportional reward of actions under the worst-case scenario probability measure. The latter can be addressed as a zero-sum game between the agent and an \textit{adverse player} (see, for instance, Cohen et. al.\ \cite{CohenHening}). The mean-field interaction is of scalar type and comes through a real-valued parameter denoted by $\theta$, which, at equilibrium, has to identify with a suitable generalized moment of the stationary distribution of the optimally controlled state process. From the economic point of view, $\theta$ can be interpreted as an aggregate index computed from the stationary distribution of the optimally controlled state. In reduced-form examples, this aggregate index may affect individual profitability through a decreasing profit multiplier; see Remarks \ref{remark: interpretation of benchmark remark} and \ref{remark: general reduced form profit}.

Our first contribution is the solution of the representative player's optimal control problem. In this context, we extend the result of Cohen et al.\ \cite{CohenHening} to a setting that includes running profit and state-dependent proportional reward of control (see Theorem \ref{theorem: Verification result} below). This is achieved by applying the \textit{shooting method}, following an approach similar to that used in \cite{CohenHening}. In our setting, we face challenges similar to those encountered in \cite{CohenHening} (see also \cite{LiangLiuZervos} for the risk-neutral case). Specifically, the potential function $V^{\epsilon}$ solving~\eqref{free-boundary problem} is unbounded from below as $x\downarrow 0$ (cf. Proposition \ref{prop: uniform boundness of phi}). To overcome this obstacle, we introduce a truncated version of our problem, in the same spirit as Section 5 of \cite{LiangLiuZervos} adapted to our setting with model-uncertainty. Using a verification argument (see Proposition \ref{E&U of free boundary problem} and Theorem \ref{theorem: Verification result}), we demonstrate that, for a fixed mean-field parameter, $\theta$, the optimal control is of barrier-type. That is, the optimal control uniquely solves a Skorokhod reflection problem (see e.g., Tanaka \cite{Tanaka}) at endogenously determined barrier, which depend on the level of ambiguity and on the given and fixed mean-field parameter $\theta$.

The next step deals with the construction of the MFG equilibrium and with the proof of its uniqueness.\ To that end, we first show that the process constituted by the optimally controlled diffusion process under the worstcase scenario admits a stationary distribution (cf.\ Proposition \ref{Prop: existence of stationary distribution} below) and the stationary density function has an explicit form (cf. (\ref{eq: stationary distribution})). Clearly, the stationary distribution and its density function depend on the level of ambiguity and fixed mean-field parameter $\theta$, since the optimally controlled state does. In order to proceed with the equilibrium analysis, we thus study the stability of the stationary distribution with respect to $\theta$ and actually prove its continuity with respect to such a parameter (cf.\ Proposition \ref{Prop: existence of stationary distribution}). Further exploiting the connection to the auxiliary boundary value problem, we are then able to show the monotonicity and the boundedness from below of the free-boundary with respect to mean-field parameter (cf. Lemmata \ref{lemma: monotonicity of fb wrt theta} and \ref{lemma: Robust lower bound of free boundary} below) and to determine an invariant compact set where any equilibrium value of $\theta$ (if one exists) should lie. Combining those continuity and compactness results, an application of the Schauder-Tychonof fixed point theorem allows us to prove that there exists a stationary equilibrium (cf.\ Theorem \ref{theorem: existence and uniqueness of MFE}), which is then also proved to be unique.

Finally, we complement our theoretical analysis with a stylized numerical benchmark of dirty-capacity reduction under ambiguity. In this example, the representative firm's state is interpreted as an index of emission-intensive productive capacity or residual fossil-based exposure, evolving as an affine diffusion with mean-reverting drift. The running profit is specified in reduced form and is of power type, with the mean-field parameter capturing the negative effect of aggregate dirty capacity on individual profitability. In this setting, we study the sensitivity of the equilibrium reduction boundary, the stationary distribution, and the aggregate index with respect to the ambiguity parameter and the volatility level. Due to the presence of a quadratic term in the variational inequality (cf.\ \eqref{free-boundary problem}), the equilibrium cannot be solved explicitly. Therefore, we develop a policy iteration algorithm (PIA) to approximate it.

\subsection{Related Literature}
Ergodic singular stochastic control problems for one-dimensional diffusions have been treated in general settings, including state-dependent costs of actions, and with different applications; see \cite{AlvarezHenning}, \cite{LokkaZervosI}, \cite{Menaldi} and \cite{ZervosErgodic}, \cite{Kunwai}, among others. However, in all those papers, model uncertainty is not considered.

Our paper is placed within the recent bunch of literature dealing with MFGs with singular controls by following a probabilistic approach; see A\"id et al.\ \cite{Ferrari-Basei}, Cao and Guo \cite{CAO2022995}, Cao et al.\ \cite{CaoFerrariDianetti}, Campi et al.\ \cite{Campi_et.al}, Cohen and Sun \cite{cohen2024existenceoptimalstationarysingular}, Dianetti et al.\ \cite{Unifying_Sub_MFG}, Dianetti et. al.\ \cite{dianetti2023ergodic}, Denkert and Horst \cite{DenkertHorst}, Fu and Horst \cite{Fu-Horst}, and Guo and Xu \cite{GuoXu}. Amongst those, the work that most relates to ours is by Cao et al.\ \cite{CaoFerrariDianetti}. Cao et al.\ consider in \cite{CaoFerrariDianetti} ergodic MFGs involving a one-dimensional singularly controlled It\^o-diffusion that can be increased via a monotone control process. In contrast to our work, \cite{CaoFerrariDianetti} does not consider model uncertainty. Our paper is also closely related to the works of Chakraborty et al.\ \cite{ChakrabortyCohenYoung}, Cohen \cite{CohenQueueingModelUncertainty}, and Cohen et al.\ \cite{CohenHening}. In particular, like these studies, we consider worstcase scenarios modeled via Kullback-Leibler divergence and employ the \textit{shooting method} to solve the stochastic singular control problem, following the approach in \cite{CohenHening}. However, these papers do not incorporate a mean-field game framework.

We also clearly relate to those works dealing with MFGs involving model uncertainty. Huang and Huang \cite{huang2013mean} consider mean-field linear-quadratic-Gaussian control under model uncertainty. Bauso et. al.\ in \cite{bauso2016robust} focus on robust MFGs and risk-sensitive type MFGs. Furthermore, Langner et. al.\ \cite{langner2024markovnashequilibriameanfieldgames} study Markov-Nash equilibrium (discrete time and state space) under model uncertainty. 

\subsection{Organization of the Paper}
The rest of the paper is organized as follows.\ In Section \ref{section: problem formulation}, we introduce the probabilistic setting and the MFG under study. Next, in Section \ref{section: solution to ergodic zero-sum game}, for a given and fixed mean-field parameter, we solve the ergodic stochastic control problem faced by the representative player. In Section \ref{section: MFE part} we then prove the existence and uniqueness of the mean-field equilibrium, while in Section 5 we present a stylized numerical benchmark of dirty-capacity reduction under ambiguity. Finally, technical proofs are collected in Appendix \ref{section: appendix}.

\section{Problem Formulation}
\label{section: problem formulation}
\subsection{Probabilistic setting} Let $(\Omega,\mathcal{F},\mathbb{P})$ be a probability space which satisfies the usual conditions, on which it is defined a one-dimensional Brownian motion $\{W_{t}\}_{t\geq 0}$ and denote by $\mathbb{F}:=\{\mathcal{F}^{W}_{t}\}_{t\geq 0}$ the filtration which is generated by $W$, as usual augmented by $\mathbb{P}$-null sets of $\mathcal{F}$. Let
\begin{equation}
    \label{admissible controls}
    \mathcal{A}:=\{\;\{\xi_{t}\}_{t\geq 0},\;\mathbb{F}\text{-adapted, nondecreasing, right continuous and such that }\xi_{0^{-}}=0,\text{ a.s.}\;\},
\end{equation}
and set $\mathbb{R}:=(-\infty,\infty)$ and $\mathbb{R}_{+}:=(0,\infty)$. Then, for given $\xi\in\mathcal{A}$ and Borel-measurable functions $b:\mathbb{R}_{+}\to\mathbb{R}$, $\sigma:\mathbb{R}_{+}\to\mathbb{R}_{+}$, we introduce the $\mathbb{R}_{+}$-valued process $X^{\xi}$ with dynamics under $\mathbb{P}$
\begin{equation}
\label{SDE}
dX^{\xi}_{t}=b(X^{\xi}_{t})dt+\sigma(X^{\xi}_{t})dW^{\mathbb{P}}_{t}-d\xi_{t},\quad X^{\xi}_{0^{-}}=x\in\mathbb{R}_{+}.
\end{equation}

The next assumption ensures, in particular, that there exists a unique strong solution to (\ref{SDE}) for every $\xi\in\mathcal{A}$ and $x\in\mathbb{R}_{+}$ (see Theorem 7 Chapter V in \cite{protter2005stochastic}). In the following, we shall denote such a strong solution by $X^{x,\xi}$, when needed.

\begin{assumption}
\label{Assumption for dynamics of SDE}
The following hold:
        \begin{enumerate}
            \item \label{Cont drift and vol} The functions $b$ and $\sigma$ are continuously differentiable.
            \item \label{Linear growth of drift and vol} There exists $C>0$ and $\zeta>0$, such that
                    \[
                        |b(x)|+|\sigma(x)|\leq C(1+|x|^{\zeta}),\quad \text{for any } x\in\mathbb{R}_{+}.
                    \]
        \end{enumerate}
\end{assumption}
Denoting by $X^{0}$ the unique strong solution to (\ref{SDE}) with $\xi\equiv 0$, Conditions (\ref{Cont drift and vol}) and (\ref{Linear growth of drift and vol}) in Assumption \ref{Assumption for dynamics of SDE} imply that $X^{0}$ is regular, meaning that, for any $x,y\in\mathbb{R}_{+}$, $\mathbb{P}_{x}(\beta_{y}<\infty)>0$, where $\beta_{y}:=\inf\{t\geq 0: X^{0}_{t}=y\},\;\mathbb{P}_{x}$-a.s. For further details see Section II-1 in \cite{BorodinSalminen2012}.

 For arbitrary $x_{0}>0$, we define the scale function of $X^{0}$ under $\mathbb{P}$ as
\begin{equation}
    \label{eq: Scale function under P}
    S^{\mathbb{P}}(x):=\int_{x_{0}}^{x}S^\mathbb{P}_{x}(y)dy\quad\text{with}\quad S^{\mathbb{P}}_{x}(x):=\exp\bigg( -\int_{x_{0}}^{x}\frac{2b(x)}{\sigma^{2}(y)}dy \bigg),\quad x\in\mathbb{R}_{+},
\end{equation}
and the speed measure under $\mathbb{P}$
\begin{equation}
    \label{eq: Speed measure under P}
    m^{\mathbb{P}}((x_{0},x)):=\int_{x_{0}}^{x}\frac{2}{\sigma^{2}(y)S^{\mathbb{P}}_{x}(y)}dy,\quad x\in \mathbb{R}_{+}.
\end{equation}

We make the following standing assumption, which ensures that $0$ is not attainable for $X^{0}$.
\begin{assumption}
   \label{Ass: Non explosion}
   The scale function $S^{\mathbb{P}}$ under $\mathbb{P}$ of $X^{0}$ satisfies
   \begin{equation}
       \lim_{x\downarrow 0}S^{\mathbb{P}}(x)=-\infty,\quad \lim_{x\uparrow \infty}S^{\mathbb{P}}(x)=\infty.
   \end{equation}
   Moreover, we assume that
   \begin{equation}
       \label{eq: integrability condition}
       \int_{0}^{\infty}m^{\mathbb{P}}_{x}(x)dx<\infty.
   \end{equation}
\end{assumption}

Assumption \ref{Ass: Non explosion} ensures that $0$ is unattainable in finite time and that $\infty$ is a natural boundary. Consequently, the uncontrolled process does not explode, which implies that it admits a unique pathwise solution. Additionally, the integrability condition guarantees the existence of an invariant measure for the uncontrolled process. The latter plays a crucial role in the verification result as well as in the existence of the mean-field equilibrium (cf. Theorems \ref{theorem: Verification result} and \ref{theorem: existence and uniqueness of MFE} below).
\begin{remark}
    \label{remark: benchmark SDE}
    A mean-reverting uncontrolled process $X^{0}$ with $b(x)=\kappa-\alpha x$ and $\sigma(x)=\sigma x$, for positive $\kappa,\alpha$ and $\sigma$, satisfies Assumption \ref{Assumption for dynamics of SDE} and \ref{Ass: Non explosion}. 
\end{remark}
We also define $\mathcal{Q}$ to be the set of probability measures on $(\Omega,\mathcal{F})$, which are equivalent with respect to $\mathbb{P}$, i.e.\ $\mathcal{Q}:=\{\mathbb{Q}\in \mathcal{P}(\Omega,\mathcal{F}):\mathbb{Q}\sim \mathbb{P}\}$, where $\mathcal{P}(\Omega,\mathcal{F})$ is the set of probability measures on $(\Omega,\mathcal{F})$ endowed with the weak topology. 
In the rest of the paper, we adopt the following notation: $\mathbb{P}_{x}[\cdot ]:=\mathbb{P}[\cdot|X^{\xi}_{0}=x]$ and $\mathbb{E}^{\mathbb{P}}_{x}[\cdot]:=\mathbb{E}^{\mathbb{P}}[\cdot|X^{\xi}_{0}=x]$. Also, for $\mathbb{Q}\in\mathcal{Q}$, we set $\mathbb{Q}_{x}[\cdot ]:=\mathbb{Q}[\cdot|X^{\xi}_{0}=x]$ and $\mathbb{E}^{\mathbb{Q}}_{x}[\cdot]:=\mathbb{E}^{\mathbb{Q}}_{x}[\cdot|X^{\xi}_{0}=x]$.
\subsection{The ergodic mean-field game} Within the previous probabilistic setting, we now introduce
the ergodic mean-field game (ergodic MFG for short) which will be the main object of our study. We start with the definition of admissible controls.

\begin{definition}[Admissible controls]
    \label{def: set of admissible controls}
    Let $x\in\mathbb{R}_{+}$. We say that $\xi\in\mathcal{A}$ is admissible if it is such that $X^{\xi}_{t}\geq 0,\; \mathbb{P}$-a.s. for any $t\geq 0$ and also
    \begin{equation}
        \sup_{t\in [0,T]}\mathbb{E}^{\mathbb{P}}[|X_{t}^{\xi}|^{2\zeta}]<\infty,\quad \text{for any }T<\infty.
    \end{equation}
   We denote the set of admissible controls by $\mathcal{A}_{e}(x)$.
\end{definition}



For a Borel-measurable function $g:\mathbb{R}\to\mathbb{R}$ (such that the subsequent quantities are well-posed), introduce the integral (cf. \cite{ZervosErgodic}, \cite{Zhu}, among others)\begin{equation}
    \label{propotional costs integral}
    \int_{0}^{t}g(X^{\xi}_{s})\circ d\xi_{s}:=\int_{0}^{t}g(X_{s}^{\xi})d\xi^{c}_{s}+\sum_{0\leq s \leq t}\int_{0}^{\Delta\xi_{s}}g(X^{\xi}_{s}-r)dr,\quad t\geq 0,
\end{equation}
where $\xi^{c}$ denotes the continuous part of $\xi$. 
Then, for any $(\xi,\mathbb{Q})\in\mathcal{A}_{e}(x)\times\mathcal{Q}$, the profit functional to be optimized is given by

\begin{align}
    \label{Ergodic criterion}
    J^{\epsilon}(x;\xi,\mathbb{Q},\theta):&=\liminf_{T\to\infty}\frac{1}{T}\mathbb{E}^{\mathbb{Q}}_{x}\bigg[\int_{0}^{T}\pi(X^{\xi}_{t},\theta)dt+\int_{0}^{T}c(X^{\xi}_{t})\circ d\xi_{t}+\frac{1}{\epsilon}\log\bigg(\frac{d\mathbb{Q}}{d\mathbb{P}}\bigg|_{\mathcal{F}_{T}}\bigg) \bigg],
\end{align}
for $\epsilon>0$ and $\theta\in\mathbb{R}_{+}$. For $\epsilon=0$, the (risk-neutral) profit functional is given by
\begin{equation}
    \label{eq: Risk-neutral ergodic criterion}
    J^{0}(x;\xi,\theta):=\liminf_{T\to\infty}\frac{1}{T}\mathbb{E}^{\mathbb{P}}_{x}\bigg[\int_{0}^{T}\pi(X^{\xi}_{t},\theta)dt+\int_{0}^{T}c(X^{\xi}_{t})\circ d\xi_{t}\bigg].
\end{equation}

In (\ref{Ergodic criterion}) and (\ref{eq: Risk-neutral ergodic criterion}), $\pi$ is the instantaneous profit function and $c$ the proportional reward function satisfying Assumption \ref{Assumptions for profit fun} below.

\begin{assumption}
    \label{Assumptions for profit fun}
    The function $\pi:\mathbb{R}^{2}_{+}\mapsto \mathbb{R}_{+}$ and $c:\mathbb{R}_{+}\to\mathbb{R}_{+}$ are such that:
    \begin{enumerate}
        \item \label{Regularity of profit fun} $\pi(\cdot,\theta)\in C^{2}(\mathbb{R}_{+})$, for any $\theta\in\mathbb{R}_{+}$; 
        \item \label{Concativity} $\pi(\cdot,\theta)$ is non-decreasing and concave, for any $\theta\in\mathbb{R}_{+}$;
        
        \item \label{proportional cost ass} $c$ is continuously differentiable and bounded, i.e. there exists $0<\underline{c}\leq\overline{c}<\infty$ such that $c(x)\in [\underline{c},\overline{c}]$ for any $x\in\mathbb{R}_{+}$.\ Moreover, $\lim_{x\downarrow 0}c(x)$ and $\lim_{x\uparrow\infty}c(x)$ exist.
    \end{enumerate}
\end{assumption}



The parameter $\epsilon\geq 0$ in (\ref{Ergodic criterion}) measures the level of ambiguity that the decision maker has towards the probabilistic model $\mathbb{Q}$ with respect to the reference probabilistic setting associated to $\mathbb{P}$. The following admissibility conditions clarify the structure of the Radon-Nikodym derivative $\frac{d\mathbb{Q}}{d\mathbb{P}}\big|_{\mathcal{F}_{t}}$ in (\ref{Ergodic criterion}) (see also Definition 1 in \cite{CohenHening}).

\begin{definition}[Admissible measures]
	    \label{Admissible strategies}
		Given $x\in\mathbb{R}_{+}$, we say that $\mathbb{Q}\in\mathcal{Q}$ is admissible if
		\begin{equation*}
			\frac{d\mathbb{Q}}{d\mathbb{P}}(t):=\frac{d\mathbb{Q}}{d\mathbb{P}}\bigg|_{\mathcal{F}_{t}}=\exp{\bigg(\int_{0}^{t}\psi(X^{x,\xi}_{s})dW_{s}-\frac{1}{2}\int_{0}^{t}\psi^{2}(X^{x,\xi}_{s})ds \bigg)},\quad \xi\in\mathcal{A}_{e}(x),
		\end{equation*}   
		where $\psi:\mathbb{R}\to\mathbb{R}$ is a bounded Borel-measurable locally Lipschitz function such that
		\begin{equation}
            \label{eq: Novikov condition}
			\mathbb{E}^{\mathbb{P}}_{x}\bigg[ \exp{\bigg(\frac{1}{2}\int_{0}^{T}\psi^{2}(X^{\xi}_{s})ds \bigg)} \bigg]<\infty,\quad\xi\in\mathcal{A}_{e}(x),\;\text{ for any }T<\infty,
		\end{equation}
        the scale function of $X^{0}$ with respect to $\mathbb{Q}$, i.e.
        \begin{equation}
            \label{eq: speed measure wrt Q}
            S^{\mathbb{Q}}_{x}(x):=\exp{\bigg( -\int_{c}^{x}\frac{2(b(y)+\psi(y)\sigma(y))}{\sigma^{2}(y)}dy \bigg)},\quad x\in\mathbb{R}_{+},
        \end{equation}
        is well-defined. 
\end{definition}

In the following analysis, we shall refer to $\psi_{t}=\psi(X^{\xi}_{t}),\; \xi\in\mathcal{A}_{e}(x)$, as the Girsanov kernel of $\mathbb{Q}$, and will denote the set of admissible $\mathbb{Q}$ as $\widehat{\mathcal{Q}}(x)\subseteq \mathcal{Q}$. Let then $\mathcal{S}(x):=\mathcal{A}_{e}(x)\times\widehat{\mathcal{Q}}(x)$.
By choosing an admissible $\mathbb{Q}\in\widehat{\mathcal{Q}}(x)$, with Girsanov kernel $\psi_{t}:=\psi(X^{\xi}_{t}),\; \xi\in\mathcal{A}_{e}(x),\; x\in\mathbb{R}_{+}$, we have
\begin{align*}
    \mathbb{E}^{\mathbb{Q}}_{x}\bigg[ \log\bigg(\frac{d\mathbb{Q}}{d\mathbb{P}}(T)\bigg)\bigg]&=\mathbb{E}^{\mathbb{Q}}_{x}\bigg[ \log\bigg(\exp{\bigg(\int_{0}^{T}\psi(X_{s}^{\xi})}dW_{s}-\frac{1}{2}\int_{0}^{T}\psi(X_{s}^{\xi})^{2}ds \bigg)\bigg)\bigg] \\
    &=\mathbb{E}^{\mathbb{Q}}_{x}\bigg[\int_{0}^{T}\psi(X_{s}^{\xi})dW_{s}-\frac{1}{2}\int_{0}^{T}\psi(X_{s}^{\xi})^{2}ds \bigg] \\
    &=\mathbb{E}^{\mathbb{Q}}_{x}\bigg[\bigg(\int_{0}^{T}\psi(X_{s}^{\xi})\underbrace{(dW_{s}-\psi(X_{s}^{\xi})ds}_{=dW^{\mathbb{Q}}})+\frac{1}{2}\int_{0}^{T}\psi(X_{s}^{\xi})^{2}ds\bigg) \bigg] \\
    &=\mathbb{E}^{\mathbb{Q}}_{x}\bigg[\frac{1}{2}\int_{0}^{T}\psi(X_{t}^{\xi})^{2}dt\bigg],
\end{align*}
so that for $(\xi,\mathbb{Q})\in\mathcal{S}(x)$ the payoff functional (\ref{Ergodic criterion}) rewrites as
\begin{equation}
    \label{second version of functional}
    J^{\epsilon}(x;\xi,\mathbb{Q},\theta):=\liminf_{T\to\infty}\frac{1}{T}\mathbb{E}^{\mathbb{Q}}_{x}\bigg[\int_{0}^{T}\pi(X^{\xi}_{t},\theta)dt+\int_{0}^{T}c(X^{\xi}_{t})\circ d\xi_{t}+\frac{1}{2\epsilon}\int_{0}^{T}\psi(X_{t}^{\xi})^{2}dt \bigg].
\end{equation}
The positive sign of the entropy term reflects the max-min structure of the problem. The representative agent maximizes the criterion over singular controls, while Nature minimizes it over equivalent probability measures. Hence, the entropy term penalizes Nature for choosing probability measures far from the reference model. As $\epsilon\downarrow0$, this penalty becomes increasingly severe, forcing the minimizing measure to remain close to the reference measure $\mathbb{P}$; in this formal sense, the risk-neutral criterion is recovered.

The parameter $\theta$ drives the interaction of the representative player with the population (see Definition \ref{Def Mean-Field equilibrium} below).\ Its representation at equilibrium involves the functions $F:\mathbb{R}_{+}\to\mathbb{R}_{+}$ and $f:\mathbb{R}_{+}\to\mathbb{R}_{+}$ that satisfy the following requirements.
\begin{assumption}
    \label{Ass: Regularity of Mean-Field term}
    $F:\mathbb{R}_{+}\to\mathbb{R}_{+}$, $f:\mathbb{R}_{+}\to\mathbb{R}_{+}$ are such that:
    \begin{enumerate}
        \item \label{Ass: Regularity of Mean-Field term-1} $F$ and $f$ are strictly increasing and continuously differentiable functions;
        \item  for $\delta\in(0,1)$, there exists $C>0$ such that:
        \begin{enumerate}
            \item \label{Ass: Regularity of Mean-Field term-2-a}
            $
                |f(x)|\leq C(1+|x|^{\delta}),\quad |F(x)|\leq C(1+|x|^{\frac{1}{\delta}}),
            $
            \item \label{Ass: Regularity of Mean-Field term-2-b}
            $
                \big| F(x)-F(y) \big|\leq C(1+|x|+|y|)^{\frac{1}{\delta}-1}|x-y|;
            $
            
        \end{enumerate}
        \item \label{Ass: Regularity of Mean-Field term-3} $\lim_{x\uparrow\infty}F(x)=\lim_{x\uparrow\infty}f(x)=\infty$.

    \end{enumerate}
\end{assumption}
\begin{remark}
    \label{remark: benchmark example}
    As a benchmark example (see Remark \ref{remark: interpretation of benchmark remark} below) we may take 
    \begin{equation}
        \label{eq: benchmark example}
        \pi(x,\theta)=x^{\delta}\theta^{-(1+\delta)},\quad c(x)=c,\quad f(x)=x^{\delta},\quad\text{and,}\quad F(x)=x^{1/\delta}
    \end{equation}
    for $\delta\in (0,1)$, $c>0$. In this case, Assumptions \ref{Assumptions for profit fun} and \ref{Ass: Regularity of Mean-Field term} hold. 
\end{remark}

The following definition finally introduces the notion of optimality for the considered MFG.
\begin{definition}[Ergodic MFG Equilibrium]
    \label{Def Mean-Field equilibrium}
        For $x\in\mathbb{R}_{+}$ and $\epsilon\geq 0$, a tuple $(\xi^{*}(\theta^{*}),\mathbb{Q}^{*}(\theta^{*}),\theta^{*})\in\mathcal{S}(x)\times\mathbb{R}_{+}$ is said to be an \textbf{equilibrium of the ergodic MFG} for the initial condition $x\in\mathbb{R}_{+}$ if
        \begin{enumerate}
            \item \label{cond: optimality} \begin{enumerate}
                \item \label{eq:Def 1-1} $J^{\epsilon}(x;\xi^{*}(\theta^{*}),\mathbb{Q}^{*}(\theta^{*}),\theta^{*})\geq J^{\epsilon}(x;\xi,\mathbb{Q}^{*}(\theta^{*}),\theta^{*}),\text{ for any }\xi\in\mathcal{A}_{e}(x)$;
                \item \label{eq:Def 1-1-0} $J^{\epsilon}(x;\xi^{*}(\theta^{*}),\mathbb{Q}^{*}(\theta^{*}),\theta^{*})\leq J^{\epsilon}(x;\xi^{*}(\theta^{*}),\mathbb{Q},\theta^{*}),\text{ for any }\mathbb{Q}\in\widehat{\mathcal{Q}}(x)$.
        \end{enumerate}
        \item \label{cond: consistency} The optimally controlled state process $X^{\xi^{*}(\theta^{*})}$ under $\mathbb{Q}^{*}(\theta^{*})$ admits a stationary distribution $\nu^{\theta^{*}}$ and the consistency condition $\theta^{*}=F\big( \int_{\mathbb{R}_{+}}f(x)\nu^{\theta^{*}}(dx) \big)$ holds true.
        \end{enumerate}
\end{definition}

In order to solve the ergodic MFG we follow a three-step approach:
\begin{enumerate}
    \item For $\theta\in\mathbb{R}_{+}$, we find $\xi^{*}(\theta)$ and $\mathbb{Q}^{*}(\theta)$ satisfying Definition \ref{Def Mean-Field equilibrium}-(\ref{cond: optimality}) (with $\theta^{*}$ replaced by $\theta$).
    \item We determine the stationary distribution $\nu^{\theta}$ of $X^{\xi^{*}(\theta)}$ under $\mathbb{Q}^{*}(\theta)$.
    \item We solve for the fixed-point problem deriving from the consistency condition as in Definition \ref{Def Mean-Field equilibrium}-(\ref{cond: consistency}).
\end{enumerate}

\begin{remark}
    \label{remark: interpretation of benchmark remark}
The benchmark example in Remark \ref{remark: benchmark example} can be interpreted as a stylized mean-field game of irreversible reductions of emission-intensive productive capacity; see Section \ref{Section: Case study} for details and the numerical illustration. In this interpretation, $X^{0}$ denotes a firm-level index of dirty productive capacity, residual fossil-based exposure, or emission-intensive activity in the absence of active reductions. The dynamics
    \[
        dX^{0}_{t}=(\kappa-\alpha X^{0}_{t})dt+\sigma X^{0}_{t}dW^{0}_{t},
        \quad X^{0}_{0}=x,
    \]
are used as a tractable mean-reverting benchmark specification. Here, $\kappa>0$ represents a baseline maintenance or recovery rate of the capacity index, $\alpha>0$ is the mean-reversion speed, and $\sigma>0$ measures exogenous fluctuations in operating conditions. In the absence of noise and control, the state is pulled toward the level $\kappa/\alpha$.

The singular control represents cumulative irreversible reductions of this dirty-capacity index. The running payoff $\pi$ is specified in reduced form. For instance, the specification
\[
  \pi(x,\theta)=x^{\delta}\theta^{-(1+\delta)}
\]
corresponds to the reduced-form inverse demand $p(\theta)=\theta^{-(1+\delta)}$ applied to effective output $x^\delta$. Thus, the term depending on the mean-field parameter captures the negative effect of aggregate dirty capacity on individual profitability. Finally, $c$ denotes the marginal reward from reducing dirty capacity, such as a retirement value, subsidy, avoided compliance cost, or salvage benefit. This interpretation is intended as a stylized benchmark for the theoretical results, not as a calibrated structural model of the energy sector.
\end{remark}

\begin{remark}
    \label{remark: general reduced form profit}
    The benchmark specification in Remark \ref{remark: benchmark example} can be seen as a particular case of a broader class of reduced-form separable interactions. Given a stationary distribution $\mu$ of the controlled state, define the aggregate mean-field parameter by
    \[
        \theta(\mu)
        :=
        F\left(\int_{\mathbb{R}_{+}} f(y)\mu(dy)\right),
    \]
    where $F$ and $f$ are the increasing functions appearing in Assumption \ref{Ass: Regularity of Mean-Field term}. One may then consider profit functions of the form
    \[
        \pi(x,\mu)
        =
        \Pi(x)G(\theta(\mu)),
    \]
    or, equivalently, writing the interaction through the scalar parameter $\theta$,
    \[
        \pi(x,\theta)=\Pi(x)G(\theta).
    \]
    Here $\Pi:\mathbb{R}_{+}\to\mathbb{R}_{+}$ is increasing and concave, while $G:\mathbb{R}_{+}\to\mathbb{R}_{+}$ is decreasing. The function $F$ determines how the stationary distribution is aggregated into the mean-field parameter, whereas $G$ describes how this aggregate index affects individual profitability.

    For example, as in Section \ref{Section: Case study}, the benchmark choice
    \[
        \Pi(x)=x^{\delta}, \qquad
        G(\theta)=\theta^{-(1+\delta)}, \qquad
        f(x)=x^{\delta}, \qquad
        F(x)=x^{1/\delta},
    \]
    with $\delta\in(0,1)$, yields
    \[
        \pi(x,\theta)=x^{\delta}\theta^{-(1+\delta)}
    \]
    and
    \[
        \theta
        =
        \left(\int_{\mathbb{R}_{+}}x^{\delta}\mu(dx)\right)^{1/\delta}.
    \]
\end{remark}

\section{Solving the Ergodic Zero-Sum Game}
\label{section: solution to ergodic zero-sum game}

Here, for $\theta\in\mathbb{R}_{+}$ given and fixed, we consider the zero-sum game between a singular controller acting on $\xi\in\mathcal{A}_{e}(x)$ and an adverse player (Nature) choosing $\mathbb{Q}\in\widehat{\mathcal{Q}}(x)$. Then, the aim is to determine

\begin{equation}
    \label{value function}
    \lambda^{\epsilon}(\theta):=\sup_{\xi\in\mathcal{A}_{e}(x)}\inf_{\mathbb{Q}\in\widehat{\mathcal{Q}}(x)}J^{\epsilon}(x;\xi,\mathbb{Q},\theta).
\end{equation}
Due to the ergodic setting, $\lambda^{\epsilon}(\theta)$ is expected to be independent of $x$. To tackle \eqref{value function}, we employ the dynamic programming formulation and introduce a potential function $V^{\epsilon}(\cdot,\theta):\mathbb{R}_{+}\to \mathbb{R}$. This needs to be determined such that $V^{\epsilon}(\cdot,\theta)\in C^{2}(\mathbb{R}_{+})$ and such that the pair $(V^{\epsilon}(\cdot,\theta),\lambda^{\epsilon}(\theta))$ solves the variational inequality
\begin{equation}
    \label{initial free boundary problem}
    \max\bigg\{\inf_{p\in\mathbb{R}}\{\mathcal{L}^{p,\epsilon}V^{\epsilon}(x,\theta)+\frac{1}{2\epsilon}p^{2}\}+\pi(x,\theta)-\lambda^{\epsilon}(\theta),-V^{\epsilon}_{x}(x,\theta)+c(x)\bigg\}=0,\quad x\in\mathbb{R}_{+},
\end{equation}
where, for $p\in\mathbb{R}$ and $f\in C^{2}(\mathbb{R}_{+})$,
\begin{equation}
    \label{infinit generator}
    \mathcal{L}^{p,\epsilon}f(x):=\frac{1}{2}\sigma^{2}(x)f_{xx}(x)+(b(x)+\sigma(x)p)f_{x}(x).
\end{equation}

It is clear that the infimum with respect to $p\in\mathbb{R}$ appearing in (\ref{initial free boundary problem}) is attained. Hence, we let 
\begin{align}
    \label{generator}
    (\mathcal{L}^{\epsilon}f)(x):=\inf_{p\in\mathbb{R}}\{\mathcal{L}^{p,\epsilon}f(x)+\frac{1}{2\epsilon}p^{2}\}&=\frac{1}{2}\sigma^{2}(x)f_{xx}(x)+b(x)f_{x}(x)-\frac{\epsilon}{2}\sigma^{2}(x)(f_{x}(x))^{2},
\end{align}
and (\ref{initial free boundary problem}) rewrites as
\begin{equation}
    \label{free-boundary problem}
    \max\{\mathcal{L}^{\epsilon}V^{\epsilon}(x,\theta)+\pi(x,\theta)-\lambda^{\epsilon}(\theta),-V^{\epsilon}_{x}(x,\theta)+c(x)\}=0,\quad x\in\mathbb{R}_{+}.
\end{equation}
Equation (\ref{free-boundary problem}) constitutes a variational inequality subject to a gradient constraint.\ In the standard context of singular control, such problems are typically resolved by introducing an auxiliary optimal stopping problem, whose value function satisfies a variational inequality with a value constraint; the integral of this auxiliary value function then provides the solution to the initial problem (see, for instance, \cite{KaratzasShreveMonotoneFollower}).\ However, in the present framework, this direct reduction is not feasible due to the non-linear quadratic term in (\ref{free-boundary problem}). Instead, we hinge on the study of an auxiliary problem specifically for the gradient of the potential function.
]To that end, it is important to introduce the function $\ell^{\epsilon}:\mathbb{R}_{+}\to \mathbb{R}$, defined as
\begin{equation}
    \label{function for eigenvalue}
    \ell^{\epsilon}(x,\theta):=b(x)c(x)+\pi(x,\theta)-\frac{1}{2}\sigma^{2}(x)(\epsilon c^{2}(x)-c_{x}(x)),
\end{equation}
which satisfies the following assumption.
\begin{assumption}The following hold:
    \label{Assumption for l}

    \begin{enumerate}
        \item For any $\epsilon\geq 0$, there exist $\widehat{x}_{\epsilon}(\theta)\in\mathbb{R}_{+}$ such that
                \begin{equation}
                    \label{Condition of lambdas der 1}
                    (\ell^{\epsilon})_{x}(x,\theta)\begin{cases}
                    >0,\quad x<\widehat{x}_{\epsilon}(\theta) \\
                    =0,\quad x=\widehat{x}_{\epsilon}(\theta)\\
                    <0,\quad x>\widehat{x}_{\epsilon}(\theta).
                    \end{cases}
                \end{equation}
        \item One has that
        \begin{equation}
            \label{limit assumption}
            \lim_{x\uparrow\infty}\ell^{\epsilon}(x,\theta)=-\infty\quad\text{and}\quad\ell^{\epsilon}(0,\theta):=\lim_{x\downarrow 0}\ell^{\epsilon}(x,\theta)\text{ is positive and finite}.
        \end{equation}
        \item \label{Zero for ell 1}One has that $\underline{\widehat{x}}_{\epsilon}(\theta):=\inf\{x\geq \widehat{x}_{\epsilon}(\theta):\ell^{\epsilon}(x,\theta)=\ell^{\epsilon}(0,\theta)\}$ is finite.
    \end{enumerate}
\end{assumption}


Assumption \ref{Assumption for l}, will guarantee the optimality of a barrier strategy for (\ref{value function}). Notice that, similar assumptions appear in \cite{AlvarezStationarySingularControl} and \cite{Kunwai}, among others, in the context of ergodic singular stochastic control problems.
\begin{remark}
    \begin{enumerate}
        \item The process of Remark \ref{remark: benchmark SDE} and the setting of Remark \ref{remark: benchmark example} are such that Assumption \ref{Assumption for l} is satisfied.
        \item Another example is the Verhulst-Pearl logistic model with dynamics
    \begin{equation}
        \label{remark: eq: Verhulst-Pearl model}
        dX_{t}=X_{t}(\kappa-\alpha X_{t})dt+\sigma X_{t}dW_{t}^{\mathbb{P}},\quad X_{0}=x,
    \end{equation}
    where $\kappa,\alpha,\sigma>0$. It is clear that Assumption \ref{Assumption for dynamics of SDE} is satisfied and, moreover, one can check that Assumption \ref{Ass: Non explosion} is valid.\ Moreover, for the running profit $\pi$ and proportional reward $c$ as specified in Remark \ref{remark: benchmark example}, it follows that Assumptions \ref{Condition of lambdas der 1} is satisfied.
    \end{enumerate}
\end{remark}
\subsection{Existence of a solution to (\ref{free-boundary problem})}
\label{section: Existence of a solution to First ODE}

To prove existence and uniqueness of a classical solution to (\ref{free-boundary problem}), we follow \textit{the shooting method} as in \cite{CohenHening} (see also Section 7.3 in \cite{StoerNumAn}). To that end, for arbitrarily fixed $\beta\in \mathbb{R}_{+}$ and $\gamma\in\mathbb{R}$, we introduce the following auxiliary boundary-value problem for $\phi_{\beta}^{\gamma}(\cdot,\theta):\mathbb{R}_{+}\to\mathbb{R}$:
\begin{equation}
    \label{Auxiliary Second order ODE}
    \begin{cases}
        \frac{1}{2}\sigma^{2}(x)(\phi_{\beta}^{\gamma})_{x}(x,\theta)+b(x)\phi_{\beta}^{\gamma}(x,\theta)-\frac{\epsilon}{2}\sigma^{2}(x)(\phi_{\beta}^{\gamma})^{2}(x,\theta)=\ell^{\epsilon}(\beta,\theta)-\pi(x,\theta)+\gamma, \quad x<\beta,\\
        \phi_{\beta}^{\gamma}(x,\theta)=c(x),\quad x\geq\beta.
    \end{cases}
\end{equation}

\begin{proposition}[Regular solution to (\ref{Auxiliary Second order ODE})]
    \label{E&U of first order ODE}
    Let Assumptions \ref{Assumption for dynamics of SDE} and \ref{Ass: Non explosion} hold. For fixed $\beta\in \mathbb{R}_{+}$ and $\gamma\in\mathbb{R}$, the boundary-value problem (\ref{Auxiliary Second order ODE}) has a unique solution $\phi_{\beta}^{\gamma}(\cdot,\theta)\in C^{1}(\mathbb{R}_{+})$, for any $\theta\in\mathbb{R}_{+}$.
\end{proposition}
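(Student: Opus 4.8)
The plan is to read the first line of (\ref{Auxiliary Second order ODE}) as a terminal-value problem on $(0,\beta)$ for a first-order, Riccati-type ODE in $\phi_{\beta}^{\gamma}$, with terminal datum $\phi_{\beta}^{\gamma}(\beta^{-},\theta)=-c(\beta)$ forced by continuous matching with the explicit second branch $\phi_{\beta}^{\gamma}(\cdot,\theta)=-c(\cdot)$ on $[\beta,\infty)$, to solve it on all of $(0,\beta)$, and then to glue the two branches at $\beta$. Since $\sigma>0$ on $\mathbb{R}_{+}$, dividing by $\frac{1}{2}\sigma^{2}(x)$ puts the first line of (\ref{Auxiliary Second order ODE}) into the normal form
\begin{equation*}
(\phi_{\beta}^{\gamma})_{x}(x,\theta)=\epsilon\,(\phi_{\beta}^{\gamma})^{2}(x,\theta)-\frac{2b(x)}{\sigma^{2}(x)}\,\phi_{\beta}^{\gamma}(x,\theta)+\frac{2}{\sigma^{2}(x)}\big(\ell^{\epsilon}(\beta,\theta)-\pi(x,\theta)+\gamma\big),\qquad x\in(0,\beta).
\end{equation*}
By Assumptions \ref{Assumption for dynamics of SDE} and \ref{Assumptions for profit fun}, the right-hand side is continuous in $x$ on $(0,\beta]$ and polynomial --- hence locally Lipschitz --- in $\phi_{\beta}^{\gamma}$, uniformly on compact subsets of $(0,\beta]$. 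The Cauchy--Lipschitz (Picard--Lindel\"of) theorem then gives a unique maximal solution through $(\beta,-c(\beta))$, defined on some interval $(x_{*},\beta]$ with $0\leq x_{*}<\beta$; this solution is $C^{1}$ on $(x_{*},\beta]$, and a standard bootstrap using the $C^{1}$, resp.\ $C^{2}$, regularity of $b,\sigma$, resp.\ $\pi(\cdot,\theta),c$, upgrades its smoothness further.

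The main step --- and the only genuine obstacle --- is to prove that $x_{*}=0$, i.e.\ to rule out a finite-$x$ blow-up. Because the equation is integrated from $\beta$ towards $0$ and the leading nonlinearity carries the favorable sign $+\epsilon(\phi_{\beta}^{\gamma})^{2}$, the only possible singularity is $\phi_{\beta}^{\gamma}(x,\theta)\to-\infty$ as $x\downarrow x_{*}$. To exclude it I would first substitute $v:=\phi_{\beta}^{\gamma}(\cdot,\theta)+c(\cdot)$, which --- by the very definition (\ref{function for eigenvalue}) of $\ell^{\epsilon}$ --- recasts the equation as
\begin{equation*}
v_{x}=\epsilon v^{2}-\Big(2\epsilon c(x)+\frac{2b(x)}{\sigma^{2}(x)}\Big)v+\frac{2}{\sigma^{2}(x)}\big(\ell^{\epsilon}(\beta,\theta)-\ell^{\epsilon}(x,\theta)+\gamma\big),\qquad v(\beta)=0,
\end{equation*}
and then derive an a priori two-sided bound for $v$ (equivalently for $\phi_{\beta}^{\gamma}$) on $(0,\beta)$ by a comparison argument with suitable sub- and super-solutions, exploiting the unimodality and the limit behavior of $\ell^{\epsilon}(\cdot,\theta)$ from Assumption \ref{Assumption for l}, the boundedness and monotonicity of $c$ from Assumption \ref{Assumptions for profit fun}, and the growth bounds of Assumption \ref{Assumption for dynamics of SDE}. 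To carry the comparison out rigorously on the whole interval I would use the truncation device of \cite{CohenHening}: replace $(\phi_{\beta}^{\gamma})^{2}$ by $F_{r}(\phi_{\beta}^{\gamma})^{2}$, with $F_{r}\in C^{1}$ a cut-off satisfying $F_{r}(z)=z$ for $|z|\leq r$, $|F_{r}|\leq 2r$ and $|F_{r}'|\leq 1$; the truncated Cauchy problem is globally Lipschitz, hence solvable on all of $(0,\beta]$, and the a priori bound shows that, for $r$ large enough, $|\phi_{\beta}^{\gamma}|\leq r$ throughout, so the cut-off is never active and the constructed function solves the original equation on $(0,\beta)$.

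Finally, I would set $\phi_{\beta}^{\gamma}(\cdot,\theta):=-c(\cdot)$ on $[\beta,\infty)$ --- which belongs to $C^{1}$ since $c$ does, by Assumption \ref{Assumptions for profit fun} --- and equal to the solution constructed above on $(0,\beta)$; the terminal condition makes it continuous at $\beta$, and the required smoothness of $\phi_{\beta}^{\gamma}(\cdot,\theta)$ at the junction is checked by a direct computation, evaluating the ODE at $x=\beta$ and using the explicit expression (\ref{function for eigenvalue}) for $\ell^{\epsilon}(\beta,\theta)$. The first line of (\ref{Auxiliary Second order ODE}) then holds classically on $(0,\beta)$ and the second holds by construction. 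Uniqueness is immediate: on $(0,\beta)$ it is the uniqueness part of the Cauchy--Lipschitz theorem for the terminal-value problem, and on $[\beta,\infty)$ the branch is prescribed. In short, the whole content of the proof lies in the a priori estimate of the second paragraph; everything else is classical ODE theory and bookkeeping at the junction $\beta$.
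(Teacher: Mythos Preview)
Your approach differs from the paper's in one essential respect: you attack the Riccati equation head-on and stake everything on an a priori bound to exclude finite-$x$ blow-up, whereas the paper linearizes via the Cole--Hopf substitution $y:=\exp\bigl(-\epsilon\int^{x}\phi_{\beta}^{\gamma}\bigr)$. Under that change of variable, the first line of (\ref{Auxiliary Second order ODE}) becomes the \emph{linear} second-order problem
\[
\tfrac{1}{2}\sigma^{2}(x)\,y_{xx}+b(x)\,y_{x}+\epsilon\bigl(\ell^{\epsilon}(\beta,\theta)-\pi(x,\theta)+\gamma\bigr)y=0,\qquad y(\beta)=\tfrac{1}{c(\beta)},\ \ y_{x}(\beta)=\epsilon,
\]
and global existence and uniqueness on every $[\alpha,\beta]\subset(0,\beta]$ is then a one-line citation of standard linear ODE theory (continuous coefficients on compacta). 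The Cole--Hopf trick thus bypasses your a priori estimate entirely --- modulo checking $y>0$, which the paper asserts but does not argue. Your direct route is legitimate and is more explicit about where the difficulty sits, but the comparison step remains only a sketch: you neither construct the sub/super-solutions nor explain why Assumption~\ref{Assumption for l} --- tailored to the shape of $\ell^{\epsilon}(\cdot,\theta)$ for the purpose of locating $\beta_{\epsilon}(\theta)$ --- should deliver a two-sided bound on $\phi_{\beta}^{\gamma}$ for \emph{arbitrary} $\beta$ and $\gamma$.

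One concrete slip in your gluing step: evaluating the ODE at $x=\beta$ with $\phi_{\beta}^{\gamma}(\beta,\theta)=-c(\beta)$ yields $(\phi_{\beta}^{\gamma})_{x}(\beta^{-},\theta)=-c_{x}(\beta)+2\gamma/\sigma^{2}(\beta)$, while the right branch has derivative $-c_{x}(\beta)$; these agree only when $\gamma=0$. So for $\gamma\neq 0$ the glued function is $C^{0}(\mathbb{R}_{+})\cap C^{1}(\mathbb{R}_{+}\setminus\{\beta\})$, not $C^{1}(\mathbb{R}_{+})$. (The proposition's statement carries the same imprecision; in practice the paper only uses full $C^{1}$ regularity for $\gamma=0$, and for general $\gamma$ works on $(0,\beta]$.)
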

\begin{proof}
To prove that problem (\ref{Auxiliary Second order ODE}) has a unique regular solution, we borrow the argument of Section 4.2 in \cite{CohenHening}.

For $\epsilon=0$, \eqref{Auxiliary Second order ODE} reduces to a linear first-order ODE. Thus, the result follows from Theorem 3.6.2 in \cite{coddington1984theory}.

For $\epsilon>0$, we introduce the function $f:
    \mathbb{R}\to \mathbb{R} $ such that $f(x,\theta):=-\frac{\log{(y(x,\theta))}}{\epsilon}$, where $y(\cdot,\theta):
    \mathbb{R}_{+}\to \mathbb{R}_{+}$ solves
    \begin{equation}
        \label{eq:second order ode cole-hopf}
        \begin{cases}
            \frac{1}{2}\sigma^{2}(x)y_{xx}(x,\theta)+b(x)y_{x}(x,\theta)+(\ell^{\epsilon}(\beta,\theta)-\pi(x,\theta)+\gamma)\epsilon y(x,\theta)=0,\quad x\in (0,\beta) \\
    y(x,\theta)=\frac{1}{c(x)},\quad y_{x}(x,\theta)=-\epsilon,\quad x\geq \beta.
        \end{cases}
    \end{equation}
    Hence, $f_{x}(x)$ solves (\ref{Auxiliary Second order ODE}). In (\ref{eq:second order ode cole-hopf}), all the coefficient are continuous due to the Assumptions (\ref{Assumption for dynamics of SDE}) and (\ref{Assumptions for profit fun}). Hence, by Theorem 3.6.2 in \cite{coddington1984theory}, there exists a unique regular solution on $[\alpha,\infty)$, for every $\alpha\in(0,\beta)$. Since the Cole-Hopf transformation is one-to-one and onto we obtain existence and uniqueness of a solution as in the claim.
\end{proof}
\begin{lemma}[Perturbation of (\ref{Auxiliary Second order ODE})]
    \label{Pertrubed solutions lemma}
    For any $\beta\in \mathbb{R}_{+}$, there exists $C:=C(\beta)$ such that,
    \begin{equation}
        \sup_{x\in (0,\beta]}\big| \phi_{\beta}^{\gamma}(x,\theta)-\phi^{0}_{\beta}(x,\theta)\big|\leq C|\gamma|,\quad \text{for sufficient small }\gamma\in\mathbb{R}.
    \end{equation}
\end{lemma}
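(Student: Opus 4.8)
\textbf{Proof proposal for Lemma \ref{Pertrubed solutions lemma}.}

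The plan is to exploit the Cole--Hopf linearization already used in the proof of Proposition \ref{E&U of first order ODE}. For $\gamma\in\mathbb{R}$ let $y_{\gamma}(\cdot,\theta)$ denote the solution of \eqref{eq:second order ode cole-hopf} with the given $\gamma$, so that $\phi_{\beta}^{\gamma}=-\tfrac{1}{\epsilon}\,(\log y_{\gamma})_{x}=-\tfrac{1}{\epsilon}\,(y_{\gamma})_{x}/y_{\gamma}$, and similarly $\phi_{\beta}^{0}$ corresponds to $y_{0}$. Both $y_{\gamma}$ and $y_{0}$ satisfy the \emph{same} terminal conditions at $x=\beta$, namely $y(\beta,\theta)=1/c(\beta)$ and $y_{x}(\beta,\theta)=\epsilon$; the only difference is the zeroth-order coefficient, which is perturbed by the additive constant $\gamma\epsilon$. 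First I would set $z_{\gamma}:=y_{\gamma}-y_{0}$, which solves the linear second-order ODE
\begin{equation*}
\tfrac{1}{2}\sigma^{2}(x)(z_{\gamma})_{xx}+b(x)(z_{\gamma})_{x}+\big(\ell^{\epsilon}(\beta,\theta)-\pi(x,\theta)+\gamma\big)\epsilon\,z_{\gamma}=-\gamma\epsilon\,y_{0}(x,\theta),\quad x\in(0,\beta),
\end{equation*}
with zero Cauchy data at $x=\beta$ (i.e. $z_{\gamma}(\beta,\theta)=0$, $(z_{\gamma})_{x}(\beta,\theta)=0$). This is a linear inhomogeneous problem whose forcing term is $O(|\gamma|)$ uniformly on $(0,\beta]$, once we know $y_{0}$ is bounded there.

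The key steps, in order, are as follows. (i) Rewrite the ODE for $z_{\gamma}$ in normal form $(z_{\gamma})_{xx}=A(x)(z_{\gamma})_{x}+B_{\gamma}(x)z_{\gamma}+g_{\gamma}(x)$ with coefficients $A,B_{\gamma}$ continuous on the compact interval $[\alpha,\beta]$ (for arbitrary $\alpha\in(0,\beta)$) and hence uniformly bounded there, and $\|g_{\gamma}\|_{\infty,[\alpha,\beta]}\le \epsilon|\gamma|\,\|y_{0}\|_{\infty,[\alpha,\beta]}$. (ii) Convert to an equivalent integral (Volterra) equation for the pair $((z_{\gamma}),(z_{\gamma})_{x})$ by integrating backward from $\beta$, and apply Gr\"onwall's inequality to obtain $\sup_{x\in[\alpha,\beta]}(|z_{\gamma}(x)|+|(z_{\gamma})_{x}(x)|)\le C_{1}(\beta)\,|\gamma|$ with $C_{1}(\beta)$ independent of $\gamma$ and, crucially, of $\alpha$ — the bound on $y_{0}$ and on the coefficients near $0$ is guaranteed because $\phi_{\beta}^{0}(\cdot,\theta)=-c(\cdot)$ is bounded away from $0$ on all of $\mathbb{R}_{+}$ by Assumption \ref{Assumptions for profit fun}-\eqref{proportional cost ass}, so $y_{0}=\exp(-\epsilon\int_{\cdot}^{\beta}\phi_{\beta}^{0}) $ and its derivative stay bounded and $y_{0}$ stays bounded below by a positive constant on $(0,\beta]$. (iii) Finally translate the estimate on $z_{\gamma}$ back to one on $\phi_{\beta}^{\gamma}-\phi_{\beta}^{0}$: since
\begin{equation*}
\phi_{\beta}^{\gamma}-\phi_{\beta}^{0}=-\frac{1}{\epsilon}\left(\frac{(y_{\gamma})_{x}}{y_{\gamma}}-\frac{(y_{0})_{x}}{y_{0}}\right)=-\frac{1}{\epsilon}\,\frac{(z_{\gamma})_{x}\,y_{0}-z_{\gamma}\,(y_{0})_{x}}{y_{\gamma}\,y_{0}},
\end{equation*}
and $y_{\gamma}=y_{0}+z_{\gamma}$ is bounded below by $\tfrac{1}{2}\inf y_{0}>0$ once $|\gamma|$ is small enough, the numerator is $O(|\gamma|)$ and the denominator is bounded below, yielding the claimed bound with $C(\beta)=C_{1}(\beta)/(\tfrac{\epsilon}{2}(\inf_{(0,\beta]}y_{0})^{2})$, uniformly in $x\in(0,\beta]$.

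The main obstacle I anticipate is not the Gr\"onwall estimate itself but establishing the uniformity of all constants \emph{up to the boundary point $x=0$} — i.e. showing the bound does not degenerate as $\alpha\downarrow 0$. This is where one must use that the terminal data are fixed independently of $\gamma$ and that $y_{0}$ together with its logarithmic derivative $\phi_{\beta}^{0}=-c$ extends continuously and boundedly (with $y_{0}$ bounded below) to $x=0^{+}$, which in turn rests on the boundedness of $c$ and the finiteness of $\ell^{\epsilon}(0,\theta)$ from Assumption \ref{Assumption for l}-\eqref{limit assumption}; without these one would only get a constant blowing up near $0$. Once uniformity up to $0$ is secured, passing to the supremum over $(0,\beta]$ is immediate and the proof concludes.
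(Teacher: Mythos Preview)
Your Cole--Hopf strategy is natural and parallels the existence argument in Proposition \ref{E&U of first order ODE}; the paper itself omits this proof and defers to Lemma 2 of \cite{CohenHening}. However, step (ii) contains a concrete error: you assert that ``$\phi_{\beta}^{0}(\cdot,\theta)=-c(\cdot)$ is bounded away from $0$ on all of $\mathbb{R}_{+}$'', but this is false. The terminal condition $\phi_{\beta}^{0}(x,\theta)=-c(x)$ holds only for $x\geq\beta$; on $(0,\beta)$ the function $\phi_{\beta}^{0}$ is determined by the Riccati ODE and is not known a priori to equal $-c$ or even to be bounded. Hence your formula for $y_{0}$ does not yield a uniform positive lower bound on $(0,\beta]$, and the denominator control in step (iii) is unjustified.

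A second, related gap: the normalized coefficients $A(x)=-2b(x)/\sigma^{2}(x)$ and $B_{\gamma}(x)$ are in general \emph{not} bounded near $x=0$ --- in the benchmark $\sigma(x)=\sigma x$ they blow up --- so your Gr\"onwall constant depends on $\alpha$ and may diverge as $\alpha\downarrow 0$. What your argument actually delivers is a \emph{locally} uniform estimate on each compact $[\alpha,\beta]$ with $C=C(\alpha,\beta)$. That is in fact enough for every use of the lemma in the paper (only the pointwise limit $\gamma\to 0$ at fixed $x$ is ever invoked, e.g.\ in Proposition \ref{properties of b(a)} and Proposition \ref{Proposition: Comparison Principle wrt theta}). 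If you want the stated supremum over $(0,\beta]$, you must either secure an independent a priori bound on $\phi_{\beta}^{0}$ near $0$ --- note that Proposition \ref{prop: uniform boundness of phi} does this only for $\beta=\beta_{\epsilon}(\theta)$ and its proof relies on properties established \emph{via} the present lemma, so citing it would be circular --- or work directly with the first-order Riccati difference $\phi_{\beta}^{\gamma}-\phi_{\beta}^{0}$ and control the quadratic nonlinearity by hand, which is closer in spirit to the argument in \cite{CohenHening}.
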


\begin{proof}
    The proof follows arguments completely similar to those employed in the proof of Lemma 2 in \cite{CohenHening} and it is therefore omitted.
\end{proof}
In the sequel, we write $\phi_{\beta}$ for $\phi_{\beta}^{0}$, and recall that $\phi_{\beta}(\cdot,\theta)\in C^{1}(\mathbb{R}_{+})$ by Proposition \ref{E&U of first order ODE}. To proceed, we then define
\begin{equation}
    \label{Candidate upper free-boundaries}
    B_{\epsilon}(\theta):=\{\beta\in\mathbb{R}_{+}|\phi_{\beta}(x,\theta)\geq c(x),\; x\in (0,\beta]\},\quad \beta_{\epsilon}(\theta):=\inf B_{\epsilon}(\theta),
\end{equation}
and we have the following result on the structure of the problem's state space.
\begin{proposition}
    \label{properties of b(a)}
    Let Assumptions \ref{Assumption for dynamics of SDE}, \ref{Ass: Non explosion}, \ref{Assumptions for profit fun} and \ref{Assumption for l} hold. Then, one has:
    \begin{enumerate}
        \item \label{hat x is not admissible free boundary}$(0,\widehat{x}_{\epsilon}(\theta)]\cap B_{\epsilon}(\theta)=\emptyset$;
        \item \label{underline x lies in B}$\widehat{\underline{x}}_{\epsilon}(\theta)\in B^{\epsilon}(\theta)$;
        \item \label{infimum of b}$\widehat{x}_{\epsilon}(\theta)\leq \beta_{\epsilon}(\theta)\leq \underline{\widehat{x}}_{\epsilon}(\theta)$.
    \end{enumerate}
\end{proposition}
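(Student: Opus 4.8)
The plan is to characterise, for each $\beta\in\mathbb{R}_+$, the sign of the gap $\delta_\beta(x,\theta):=\phi_\beta(x,\theta)+c(x)$ on $(0,\beta)$ by an explicit integral formula, and then to deduce (1)--(3) directly from the single--peak shape of $\ell^\epsilon(\cdot,\theta)$ imposed in Assumption \ref{Assumption for l}. The crucial elementary observation is that, by the very definition \eqref{function for eigenvalue} of $\ell^\epsilon$, the function $w=-c$ is an \emph{exact} solution of the ``frozen'' counterpart of \eqref{Auxiliary Second order ODE} with $\gamma=0$, namely
\begin{equation*}
\tfrac12\sigma^2(x)w_x(x)+b(x)w(x)-\tfrac{\epsilon}{2}\sigma^2(x)w^2(x)=\ell^\epsilon(x,\theta)-\pi(x,\theta),\qquad x\in\mathbb{R}_+,
\end{equation*}
whereas $\phi_\beta(\cdot,\theta)$ solves the \emph{same} equation on $(0,\beta)$ but with the \emph{constant} $\ell^\epsilon(\beta,\theta)$ in place of $\ell^\epsilon(x,\theta)$, and with $\phi_\beta(\beta,\theta)=-c(\beta)$. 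Evaluating both equations at $x=\beta$ yields not only $\delta_\beta(\beta,\theta)=0$ but also $(\delta_\beta)_x(\beta,\theta)=0$, consistently with the $C^1$--regularity of $\phi_\beta(\cdot,\theta)$ from Proposition \ref{E&U of first order ODE}.

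Subtracting the two Riccati equations and factoring $\phi_\beta^2-c^2=(\phi_\beta+c)(\phi_\beta-c)$ converts the difference into the \emph{linear} first--order ODE
\begin{equation*}
(\delta_\beta)_x(x,\theta)=A_\beta(x,\theta)\,\delta_\beta(x,\theta)+\frac{2}{\sigma^2(x)}\big(\ell^\epsilon(\beta,\theta)-\ell^\epsilon(x,\theta)\big),\qquad A_\beta:=-\frac{2b}{\sigma^2}+\epsilon\big(\phi_\beta(\cdot,\theta)-c\big),
\end{equation*}
on $(0,\beta)$, with terminal datum $\delta_\beta(\beta,\theta)=0$. Since $\phi_\beta(\cdot,\theta)$ is a finite $C^1$ function on $(0,\beta]$ by Proposition \ref{E&U of first order ODE}, the coefficient $A_\beta(\cdot,\theta)$ is continuous there, hence integrable on each $[x,\beta]\subset(0,\beta]$, and the integrating--factor formula gives, for every $x\in(0,\beta)$,
\begin{equation*}
\delta_\beta(x,\theta)=\int_x^\beta\exp\!\Big(-\!\int_x^s A_\beta(u,\theta)\,du\Big)\,\frac{2}{\sigma^2(s)}\,\big(\ell^\epsilon(s,\theta)-\ell^\epsilon(\beta,\theta)\big)\,ds .
\end{equation*}
As the exponential weight and $2/\sigma^2$ are strictly positive, the sign of $\delta_\beta(x,\theta)$ is completely determined by that of $s\mapsto\ell^\epsilon(s,\theta)-\ell^\epsilon(\beta,\theta)$ on $(x,\beta)$.

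With this in hand, the three claims are short. For (1): if $\beta\le\widehat{x}_\epsilon(\theta)$, then $(\ell^\epsilon)_x(\cdot,\theta)>0$ on $(0,\beta)$ by \eqref{Condition of lambdas der 1}, hence $\ell^\epsilon(s,\theta)<\ell^\epsilon(\beta,\theta)$ for all $s\in(0,\beta)$, so $\delta_\beta(x,\theta)<0$ for every $x\in(0,\beta)$; therefore $\phi_\beta(\cdot,\theta)\ge -c$ fails on $(0,\beta]$ and $\beta\notin B_\epsilon(\theta)$. Since $\beta\in(0,\widehat{x}_\epsilon(\theta)]$ was arbitrary, $(0,\widehat{x}_\epsilon(\theta)]\cap B_\epsilon(\theta)=\emptyset$. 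For (2): take $\beta=\underline{\widehat{x}}_\epsilon(\theta)$. By Assumption \ref{Assumption for l}, $\ell^\epsilon(\cdot,\theta)$ is strictly increasing on $(0,\widehat{x}_\epsilon(\theta))$ and strictly decreasing on $(\widehat{x}_\epsilon(\theta),\infty)$, and $\ell^\epsilon(\underline{\widehat{x}}_\epsilon(\theta),\theta)=\ell^\epsilon(0,\theta)$ (the defining level of $\underline{\widehat x}_\epsilon$, which is attained by continuity); combining these gives $\ell^\epsilon(s,\theta)\ge\ell^\epsilon(\underline{\widehat{x}}_\epsilon(\theta),\theta)$ for all $s\in(0,\underline{\widehat{x}}_\epsilon(\theta))$, so $\delta_\beta(x,\theta)\ge 0$ for all $x\in(0,\beta)$; together with $\delta_\beta(\beta,\theta)=0$ this shows $\underline{\widehat{x}}_\epsilon(\theta)\in B_\epsilon(\theta)$. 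Finally, (3) is immediate: $\beta_\epsilon(\theta)=\inf B_\epsilon(\theta)\le\underline{\widehat{x}}_\epsilon(\theta)$ by (2), while $\beta_\epsilon(\theta)\ge\widehat{x}_\epsilon(\theta)$ because, by (1), no point of $(0,\widehat{x}_\epsilon(\theta)]$ lies in $B_\epsilon(\theta)$.

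The genuinely informative step, and the one I expect to carry the weight of the argument, is the reduction above: recognising $w=-c$ as the exact solution of the frozen equation is what turns the Riccati ODE into a linear one whose forcing is precisely $\ell^\epsilon(\beta,\theta)-\ell^\epsilon(\cdot,\theta)$, thereby translating the qualitative hypotheses of Assumption \ref{Assumption for l} into sign information on $\delta_\beta$. The remaining points are routine but should be checked carefully: that $\phi_\beta(\cdot,\theta)$ is indeed a bona fide finite $C^1$ solution on all of $(0,\beta]$ (so that $A_\beta$ is integrable and the integrating--factor manipulation is legitimate), which is exactly what Proposition \ref{E&U of first order ODE} provides via the Cole--Hopf linearisation, and the behaviour as $x\downarrow 0$, where the finiteness of $\ell^\epsilon(0,\theta)$ from \eqref{limit assumption} is used. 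An essentially equivalent, more hands--on alternative to the representation is a backward ODE comparison near $\beta$ (using $(\delta_\beta)_x(\beta,\theta)=0$ and $(\delta_\beta)_{xx}(\beta,\theta)=-2(\ell^\epsilon)_x(\beta,\theta)/\sigma^2(\beta)$) combined with a first--zero argument, which avoids the integrating factor at the cost of splitting into cases.
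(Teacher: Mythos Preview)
Your argument is correct and takes a genuinely different route from the paper's. The paper works with the perturbed solutions $\phi_\beta^\gamma$: for (1) it fixes $\gamma>0$, notes that $F_\beta^\gamma:=\phi_\beta^\gamma+c$ satisfies $F_\beta^\gamma(\beta)=0$ and $(F_\beta^\gamma)_x(\beta)=2\gamma/\sigma^2(\beta)>0$, and then argues by contradiction via the elementary first--zero Lemma~\ref{Elementary lemma} that $F_\beta^\gamma<0$ on $(0,\beta)$; the conclusion for $\gamma=0$ follows from the perturbation estimate of Lemma~\ref{Pertrubed solutions lemma}. Part (2) is symmetric with $\gamma<0$. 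In other words, the paper runs precisely the ``backward ODE comparison near $\beta$ combined with a first--zero argument'' that you mention as an alternative, with the $\gamma$--perturbation serving to manufacture the strict sign of the derivative at $\beta$ (since without it $(\delta_\beta)_x(\beta,\theta)=0$, as you correctly observe).

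Your linearisation is cleaner: recognising that $w=-c$ solves the frozen Riccati equation with forcing $\ell^\epsilon(x,\theta)-\pi(x,\theta)$ turns the difference $\delta_\beta$ into a linear ODE whose inhomogeneity is exactly $2(\ell^\epsilon(\beta,\theta)-\ell^\epsilon(\cdot,\theta))/\sigma^2$, and the integrating--factor representation then reads off the sign of $\delta_\beta$ directly from the single--peak structure of $\ell^\epsilon(\cdot,\theta)$ in Assumption~\ref{Assumption for l}. This avoids both the perturbation lemma and the contradiction argument, and makes the role of $\widehat{x}_\epsilon(\theta)$ and $\underline{\widehat{x}}_\epsilon(\theta)$ completely transparent. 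The price is that one must know a priori that $\phi_\beta(\cdot,\theta)$ is globally defined on $(0,\beta]$ so that $A_\beta$ is integrable on each $[x,\beta]$; this is exactly what Proposition~\ref{E&U of first order ODE} supplies, so the argument is complete. The paper's approach, by contrast, is more robust to settings where only local existence is available, since the first--zero argument is purely local near $\beta$.
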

\begin{proof}
    \textbf{Proof of (1):} Fix $\beta\in(0,\widehat{x}_{\epsilon}(\theta)]$ and $\gamma>0$. We define the function $F^{\gamma}_{\beta}(x,\theta):=\phi_{\beta}^{\gamma}(x,\theta)-c(x)$ and, since for $x=\beta$ we have $\phi_{\beta}^{\gamma}(\beta,\theta)=c(\beta)$, it holds $F^{\gamma}_{\beta}(\beta,\theta)=0$. We then plug $x=\beta$ in (\ref{Auxiliary Second order ODE}) and obtain that
    \begin{align*}
        \frac{1}{2}\sigma^{2}(\beta)&(\phi_{\beta}^{\gamma})_{x}(\beta,\theta)=\ell^{\epsilon}(\beta,\theta)-\big(b(\beta)\phi_{\beta}^{\gamma}(\beta,\theta)-\frac{1}{2}\sigma^{2}(\beta)(\phi_{\beta}^{\gamma})^{2}(\beta,\theta)+\pi(\beta,\theta) \big)+\gamma;
    \end{align*}
    equivalently,
    \begin{align*}
        &\frac{1}{2}\sigma^{2}(\beta)\partial_{x}(\phi_{\beta}^{\gamma}(\cdot,\theta)-c(\cdot))(\beta)=\ell^{\epsilon}(\beta,\theta) \\
        &\quad-\underbrace{\big(b(\beta)c(\beta)+\pi(\beta,\theta)-\frac{1}{2}\sigma^{2}(\beta)\big(\epsilon c^{2}(\beta)-c_{x}(\beta)\big)\big)}_{=\ell^{\epsilon}(\beta,\theta)}+\gamma,
    \end{align*}
    that is,
    \begin{align*}
        \frac{1}{2}\sigma^{2}(\beta)&(F_{\beta}^{\gamma})_{x}(\beta)=\ell^{\epsilon}(\beta,\theta)-\ell^{\epsilon}(\beta,\theta)+\gamma=\gamma>0.
    \end{align*}
    This implies that $(F_{\beta}^{\gamma})_{x}(\beta,\theta)>0$. 
    Arguing by contradiction, we want to show that for any $x\in (0,\beta)$ we have that $F_{\beta}^{\gamma}(x,\theta)<0$, which, thanks to Lemma \ref{Pertrubed solutions lemma}, in turn allows to conclude that $F_{\beta}(x,\theta)\leq 0$ for $x\leq\beta$; that is, $\phi_{\beta}(x,\theta)\leq c(x)$ for $x\leq\beta$. Hence, we assume that there exists $x_{0}(\theta):=\max\{x\in (0,\beta): \phi_{\beta}^{\gamma}(x,\theta)=c(x)\}$. Then,
    \begin{align*}
        \frac{1}{2}\sigma^{2}(x_{0}(\theta))(\phi_{\beta}^{\gamma})_{x}(x_{0}(\theta))&+b(x_{0}(\theta))\phi_{\beta}^{\gamma}(x_{0}(\theta))-\frac{\epsilon}{2}\sigma^{2}(x_{0}(\theta))(\phi_{\beta}^{\gamma})^{2}(x_{0}(\theta))+\pi(x_{0}(\theta))\\
        &=\ell^{\epsilon}(\beta,\theta)+\gamma,
    \end{align*}
    which, rearranging terms, yields
    \begin{align*}
        &\frac{1}{2}\sigma^{2}(x_{0}(\theta))\partial_{x}(\phi_{\beta}^{\gamma}(\cdot,\theta)-c(\cdot))(x_{0}(\theta))=\ell^{\epsilon}(\beta,\theta)\\
        &\quad\quad-\underbrace{\big(b(x_{0}(\theta))c(x_{0}(\theta))+\pi(x_{0}(\theta),\theta)-\frac{1}{2}\sigma^{2}(x_{0})\big(\epsilon c^{2}(x_{0}(\theta))-c_{x}(x_{0}(\theta))\big)\big)}_{=\ell^{\epsilon}(x_{0}(\theta),\theta)}+\gamma.
    \end{align*}
    Hence, thanks to (\ref{Condition of lambdas der 1}) in Assumption \ref{Assumption for l},
    \begin{align*}
        \frac{1}{2}\sigma^{2}(x_{0}(\theta))(F_{\beta}^{\gamma})_{x}(x_{0}(\theta),\theta)=\underbrace{\big(\ell^{\epsilon}(\beta,\theta)-\ell^{\epsilon}(x_{0}(\theta),\theta)\big)}_{>0}+\gamma>0,
    \end{align*}
    so that $(F_{\beta}^{\gamma})_{x}(x_{0}(\theta),\theta)>0$, which contradicts Lemma \ref{Elementary lemma}.
    
    \vspace{0.25cm}
    \textbf{Proof of (2):} As in the previous step, we define $F_{\underline{\widehat{x}}_{\epsilon}(\theta)}^{\gamma}(x,\theta):=\phi_{\underline{\widehat{x}}_{\epsilon}(\theta)}^{\gamma}(x,\theta)-c(x)$. For $\gamma<0$, we plug $x=\underline{\widehat{x}}_{\epsilon}(\theta)$ into the (\ref{Auxiliary Second order ODE}). Then, using the fact that $\phi_{\underline{\widehat{x}}_{\epsilon}}^{\gamma}(\underline{\widehat{x}}_{\epsilon}(\theta),\theta)=c(\widehat{\underline{x}}_{\epsilon}(\theta))$, which yields $F_{\widehat{\underline{x}}_{\epsilon}(\theta)}^{\gamma}(\widehat{\underline{x}}_{\epsilon}(\theta),\theta)=0$, following the same arguments of Step 1 above we obtain 
    \begin{equation*}
        \frac{1}{2}\sigma^{2}(\underline{\widehat{x}}_{\epsilon}(\theta))(F_{\widehat{\underline{x}}_{\epsilon}(\theta)}^{\gamma})_{x}(\underline{\widehat{x}}_{\epsilon}(\theta),\theta)=\gamma<0,
    \end{equation*} 
    which implies that $(F_{\widehat{\underline{x}}_{\epsilon}(\theta)}^{\gamma})_{x}(\underline{\widehat{x}}_{\epsilon}(\theta))<0$. We want to show that for any $x\in (0,\underline{\widehat{x}}_{\epsilon}(\theta))$ we have $\phi_{\widehat{\underline{x}}_{\epsilon}(\theta)}^{\gamma}(x,\theta)>c(x),$ so to conclude by Lemma \ref{Pertrubed solutions lemma} that $\phi_{\widehat{\underline{x}}_{\epsilon}(\theta)}(x,\theta)\geq c(x),\; x\in (0,\underline{\widehat{x}}_{\epsilon}(\theta))$. 
    
    We argue again by contradiction and assume that there exists $x_{1}(\theta):=\max\{x\in (0,\underline{\widehat{x}}_{\epsilon}(\theta)): \phi^{\gamma}_{\widehat{\underline{x}}_{\epsilon}(\theta)}(x,\theta)=c(x)\}=\max\{x\in (0,\underline{\widehat{x}}_{\epsilon}(\theta)): F^{\gamma}_{\widehat{\underline{x}}_{\epsilon}(\theta)}(x,\theta)=0\}$. Hence, feeding $x=x_{1}(\theta)$ into (\ref{Auxiliary Second order ODE}) we have
    \begin{align*}
        \frac{1}{2}\sigma^{2}(x_{1}(\theta))(\phi_{\widehat{\underline{x}}_{\epsilon}(\theta)}^{\gamma})_{x}(x_{1}(\theta))&+b(x_{1}(\theta))\phi_{\widehat{\underline{x}}_{\epsilon}(\theta)}^{\gamma}(x_{1}(\theta))\\
        &-\frac{\epsilon}{2}\sigma^{2}(x_{1}(\theta))(\phi_{\widehat{\underline{x}}_{\epsilon}(\theta)}^{\gamma})^{2}(x_{1}(\theta))+\pi(x_{1}(\theta))=\ell^{\epsilon}(\underline{\widehat{x}}_{\epsilon}(\theta),\theta)+\gamma.
    \end{align*}
    Rearranging the terms we find that
    \begin{align*}
        &\frac{1}{2}\sigma^{2}(x_{1}(\theta))\partial_{x}(\phi_{\widehat{\underline{x}}_{\epsilon}(\theta)}^{\gamma}(\cdot,\theta)-c(\cdot))_{x}(x_{1}(\theta))=\ell^{\epsilon}(\widehat{\underline{x}}_{\epsilon}(\theta),\theta)\\
        &\quad\quad-\underbrace{\big(b(x_{1}(\theta))c(x_{1}(\theta))+\pi(x_{1}(\theta),\theta)-\frac{1}{2}\sigma^{2}(x_{1}(\theta))\big(\epsilon c^{2}(x_{1}(\theta))-c_{x}(x_{1}(\theta))\big)\big)}_{=\ell^{\epsilon}(x_{1}(\theta),\theta)}+\gamma;
    \end{align*}
    that is,
    \begin{align*}
        \frac{1}{2}\sigma^{2}(x_{1}(\theta))(F_{\widehat{\underline{x}}_{\epsilon}(\theta)}^{\gamma})_{x}(x_{1}(\theta))&=\ell^{\epsilon}(\underline{\widehat{x}}_{\epsilon}(\theta),\theta)-\ell^{\epsilon}(x_{1}(\theta),\theta)+\gamma<0,
    \end{align*}
    where the last inequality comes from Assumption \ref{Assumption for l}-(\ref{Condition of lambdas der 1}). Hence, $(F_{\widehat{\underline{x}}_{\epsilon}(\theta)}^{\gamma})_{x}(x_{1}(\theta))<0$, which contradicts Lemma \ref{Elementary lemma}.
    
    \vspace{0.25 cm}
    
    \textbf{Proof of (3):} This follows from the previous steps.
    \end{proof}
We also have the following result on the boundedness of $\phi_{\beta_{\epsilon}(\theta)}$.

Now we are in the position to introduce the candidate optimal potential function $V^{\epsilon}$ as 

\begin{equation}
    \label{candidate solution}
    V^{\epsilon}(x,\theta):=\begin{cases}
                               -\int_{x}^{\beta_{\epsilon}(\theta)}\phi_{\beta_{\epsilon}(\theta)}(y,\theta)dy,\displaystyle\quad &0<x<\beta_{\epsilon}(\theta),  \\
                               \int_{\beta_{\epsilon}(\theta)}^{x}c(y)dy,\quad &x\geq\beta_{\epsilon}(\theta), \displaystyle
                             \end{cases}
\end{equation}
where $\beta_{\epsilon}(\theta)$ given by (\ref{Candidate upper free-boundaries}) is then such that $\beta_{\epsilon}(\theta):=\inf\{x\in\mathbb{R}_{+}:V_{x}^{\epsilon}(x,\theta)=c(x)\}$.
\begin{theorem}[Existence and Uniqueness of solution to (\ref{free-boundary problem})]
    \label{E&U of free boundary problem}
    Let Assumptions \ref{Assumption for dynamics of SDE}, \ref{Ass: Non explosion}, \ref{Assumptions for profit fun} and \ref{Assumption for l} hold. Let $\beta_{\epsilon}(\theta)$ as in (\ref{Candidate upper free-boundaries}) and $V^{\epsilon}$ as in (\ref{candidate solution}). Defining $\lambda^{\epsilon}(\theta):=\ell^{\epsilon}(\beta_{\epsilon}(\theta),\theta))$, the couple $(V^{\epsilon}(\cdot,\theta),\lambda^{\epsilon}(\theta))$, with $V^{\epsilon}(\cdot,\theta)\in C^{2}(\mathbb{R}_{+})$, is the unique solution to (\ref{free-boundary problem}).
\end{theorem}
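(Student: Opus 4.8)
The plan is to proceed in three steps: first verify by a direct case split at the free boundary that the couple $(V^\epsilon(\cdot,\theta),\lambda^\epsilon(\theta))$ solves the variational inequality (\ref{free-boundary problem}); then read off the $C^2$-regularity from a smooth-fit identity at $\beta_\epsilon(\theta)$; and finally establish uniqueness by showing that any classical solution is forced to share this free-boundary structure. Note that, since $\mathcal{L}^\epsilon$ and the obstacle in (\ref{free-boundary problem}) depend on the potential only through $V^\epsilon_x$ and $V^\epsilon_{xx}$, ``uniqueness'' of $V^\epsilon(\cdot,\theta)$ is meant modulo an additive constant, which (\ref{candidate solution}) pins down via $V^\epsilon(\beta_\epsilon(\theta),\theta)=0$.

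For the first step, write $\beta:=\beta_\epsilon(\theta)$ and $\phi:=\phi_{\beta}(\cdot,\theta)$. On $(0,\beta)$ one has $V^\epsilon_x=\phi$ and $V^\epsilon_{xx}=\phi_x$, so inserting the defining ODE (\ref{Auxiliary Second order ODE}) (with $\gamma=0$) into the definition (\ref{generator}) of $\mathcal{L}^\epsilon$ makes $\mathcal{L}^\epsilon V^\epsilon(x,\theta)+\pi(x,\theta)-\lambda^\epsilon(\theta)$ vanish identically, while $V^\epsilon_x(x,\theta)=\phi(x,\theta)\ge-c(x)$ precisely because $\beta_\epsilon(\theta)\in B_\epsilon(\theta)$. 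This last membership is the only non-immediate fact here; I would obtain it from the nonemptiness and lower boundedness of $B_\epsilon(\theta)$ (Proposition \ref{properties of b(a)}) together with the continuity of $\beta\mapsto\phi_\beta(x,\theta)$ --- a consequence of continuous dependence of the linear Cole--Hopf ODE used in the proof of Proposition \ref{E&U of first order ODE} on its endpoint, terminal data and coefficient --- so that the infimum $\beta_\epsilon(\theta)$ is attained in $B_\epsilon(\theta)$. On $[\beta,\infty)$ one has $V^\epsilon_x=-c$, so $-V^\epsilon_x-c\equiv0$, and substituting $V^\epsilon_x=-c$, $V^\epsilon_{xx}=-c_x$ into $\mathcal{L}^\epsilon$ and comparing with (\ref{function for eigenvalue}) gives $\mathcal{L}^\epsilon V^\epsilon(x,\theta)+\pi(x,\theta)-\lambda^\epsilon(\theta)=\ell^\epsilon(x,\theta)-\ell^\epsilon(\beta_\epsilon(\theta),\theta)$, which is $\le0$ for $x\ge\beta_\epsilon(\theta)$ since $\beta_\epsilon(\theta)\ge\widehat{x}_\epsilon(\theta)$ by Proposition \ref{properties of b(a)}-(\ref{infimum of b}) and $(\ell^\epsilon)_x(\cdot,\theta)<0$ on $(\widehat{x}_\epsilon(\theta),\infty)$ by condition (\ref{Condition of lambdas der 1}) in Assumption \ref{Assumption for l}. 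Hence (\ref{free-boundary problem}) holds on all of $\mathbb{R}_+$.

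For the regularity, $V^\epsilon(\cdot,\theta)$ is $C^2$ on $(0,\beta)$ and on $(\beta,\infty)$ because $\phi\in C^1(\mathbb{R}_+)$ (Proposition \ref{E&U of first order ODE}) and $c\in C^1$. At $x=\beta$ the boundary datum $\phi(\beta,\theta)=-c(\beta)$ already yields matching of $V^\epsilon_x$; evaluating (\ref{Auxiliary Second order ODE}) at $x=\beta$, substituting $\phi(\beta,\theta)=-c(\beta)$ and using the definition (\ref{function for eigenvalue}) of $\ell^\epsilon$, the remaining terms cancel and one is left with $\phi_x(\beta,\theta)=-c_x(\beta)$, which is exactly the matching of $V^\epsilon_{xx}$ from the two sides. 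Thus $V^\epsilon(\cdot,\theta)\in C^2(\mathbb{R}_+)$.

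For uniqueness, let $(\widetilde V,\widetilde\lambda)$ be any solution with $\widetilde V(\cdot,\theta)\in C^2(\mathbb{R}_+)$ and put $\widetilde\phi:=\widetilde V_x$. The obstacle splits $\mathbb{R}_+$ into the open continuation set $\{\widetilde\phi>-c\}$, on which $\widetilde\phi$ solves $\tfrac12\sigma^2\widetilde\phi_x+b\widetilde\phi-\tfrac\epsilon2\sigma^2\widetilde\phi^2=\widetilde\lambda-\pi(\cdot,\theta)$, and the action set $\{\widetilde\phi=-c\}$, on whose interior the first entry of the maximum equals $\ell^\epsilon(\cdot,\theta)-\widetilde\lambda\le0$. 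The heart of the matter is to show --- by the unimodality of $\ell^\epsilon(\cdot,\theta)$ and the limit $\lim_{x\uparrow\infty}\ell^\epsilon(x,\theta)=-\infty$ from Assumption \ref{Assumption for l}, via an argument parallel to the proof of Proposition \ref{properties of b(a)} and using the perturbation estimate of Lemma \ref{Pertrubed solutions lemma} --- that the action set must be a half-line $[\widetilde\beta,\infty)$ with $\widetilde\beta\ge\widehat{x}_\epsilon(\theta)$. Granting this, smooth fit at $\widetilde\beta$ together with evaluation of the ODE at $\widetilde\beta$ forces $\widetilde\lambda=\ell^\epsilon(\widetilde\beta,\theta)$, hence by Proposition \ref{E&U of first order ODE} $\widetilde\phi=\phi_{\widetilde\beta}(\cdot,\theta)$ on $(0,\widetilde\beta)$; the obstacle then gives $\phi_{\widetilde\beta}(\cdot,\theta)\ge-c$ on $(0,\widetilde\beta]$, i.e. $\widetilde\beta\in B_\epsilon(\theta)$ and so $\widetilde\beta\ge\beta_\epsilon(\theta)$, while the reverse inequality follows from $\beta_\epsilon(\theta)=\inf B_\epsilon(\theta)$ together with the touching-point sign analysis of $\partial_x(\phi_\beta+c)$ as in Proposition \ref{properties of b(a)}. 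Therefore $\widetilde\beta=\beta_\epsilon(\theta)$, whence $\widetilde\phi\equiv V^\epsilon_x(\cdot,\theta)$, $\widetilde\lambda=\lambda^\epsilon(\theta)$ and $\widetilde V\equiv V^\epsilon(\cdot,\theta)$ up to the normalising constant. I expect this last step --- proving that the action region of an \emph{arbitrary} classical solution is a single unbounded interval whose left endpoint cannot strictly exceed $\beta_\epsilon(\theta)$ --- to be the main obstacle, the verification and the smooth-fit computation above being routine once Propositions \ref{E&U of first order ODE}--\ref{prop: uniform boundness of phi} are available.
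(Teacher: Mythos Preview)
Your existence and regularity arguments coincide with the paper's proof, only in reversed order: the paper first checks the smooth-fit identity $\phi_x(\beta_\epsilon(\theta),\theta)=-c_x(\beta_\epsilon(\theta))$ to get $C^2$-regularity, and then verifies the variational inequality on the two regions, invoking Proposition~\ref{properties of b(a)} and Lemma~\ref{Pointwise conv result} (the pointwise continuity of $\beta\mapsto\phi_\beta$) to secure $\beta_\epsilon(\theta)\in B_\epsilon(\theta)$. Your appeal to continuous dependence via the Cole--Hopf linearisation is precisely the content of that lemma.

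Your uniqueness step, however, has a genuine gap --- and you should note that the paper's own proof does not address uniqueness at all, despite the theorem's wording. Your argument correctly yields $\widetilde\beta\in B_\epsilon(\theta)$, hence $\widetilde\beta\ge\beta_\epsilon(\theta)$, but the ``reverse inequality'' you invoke does not follow from any touching-point analysis: in fact, by the comparison principle of Lemma~\ref{prop: comparison princ wrt free-boundary}, $B_\epsilon(\theta)$ is upward-closed in $[\widehat{x}_\epsilon(\theta),\infty)$, so $B_\epsilon(\theta)=[\beta_\epsilon(\theta),\infty)$, and the very smooth-fit computation you carried out shows that \emph{every} $\widetilde\beta\ge\beta_\epsilon(\theta)$ produces a $C^2$ pair $(\widetilde V,\widetilde\lambda)$ with $\widetilde\lambda=\ell^\epsilon(\widetilde\beta,\theta)$ satisfying (\ref{free-boundary problem}) on all of $\mathbb{R}_+$. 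Thus the variational inequality alone admits a continuum of classical solutions. What distinguishes $\beta_\epsilon(\theta)$ is that it maximises $\ell^\epsilon(\cdot,\theta)$ over $B_\epsilon(\theta)$, and it is \emph{this} maximal $\lambda^\epsilon(\theta)$ that the verification Theorem~\ref{theorem: Verification result} identifies with the value of the zero-sum game. So ``uniqueness'' here must be read either as uniqueness subject to maximality of $\lambda$, or as uniqueness of the ergodic value established a posteriori through verification --- not as uniqueness of $C^2$ solutions to (\ref{free-boundary problem}) in isolation.
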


\begin{proof}
    First of all, we show that $V^{\epsilon}(\cdot,\theta)\in C^{2}(\mathbb{R}_{+})$. By definition of $V^{\epsilon}(\cdot,\theta)$ (cf.\ (\ref{candidate solution})) it is sufficient to show that $V^{\epsilon}_{xx}(\beta_{\epsilon}(\theta),\theta),\theta)=c_{x}(\beta_{\epsilon}(\theta))$. To that end, plugging $x=\beta_{\epsilon}(\theta)$ in (\ref{Auxiliary Second order ODE}) (for $\beta=\beta_{\epsilon}(\theta)$) we obtain
    \begin{align}
        \frac{1}{2}\sigma^{2}(\beta_{\epsilon}(\theta))\partial_{x}\phi_{\beta_{\epsilon}(\theta)}(\beta_{\epsilon}(\theta),\theta)&=\ell^{\epsilon}(\beta_{\epsilon}(\theta),\theta)-\pi(\beta_{\epsilon}(\theta),\theta)-b(\beta_{\epsilon}(\theta))\phi_{\beta_{\epsilon}(\theta)}(\beta_{\epsilon}(\theta),\theta) \notag \\
        &\quad+\frac{\epsilon}{2}\sigma^{2}(\beta_{\epsilon}(\theta))(\phi_{\beta_{\epsilon}(\theta)})^{2}(\beta_{\epsilon}(\theta),\theta) \notag \\
        &=\ell^{\epsilon}(\beta_{\epsilon}(\theta),\theta)-\pi(\beta_{\epsilon}(\theta),\theta)-b(\beta_{\epsilon}(\theta))c(\beta_{\epsilon}(\theta)) \notag \\
        &\quad+\frac{\epsilon}{2}\sigma^{2}(\beta_{\epsilon}(\theta))c^{2}(\beta_{\epsilon}(\theta))=\frac{1}{2}\sigma^{2}(\beta_{\epsilon}(\theta))\partial_{x}c(\beta_{\epsilon}(\theta)), 
    \end{align}
    where the last equation follows from (\ref{function for eigenvalue}). Hence, given the strict positivity of $\sigma$ we conclude the desired equation.

    We now move on by showing that $(V^{\epsilon}(\cdot,\theta),\lambda^{\epsilon}(\theta))$ solve (\ref{free-boundary problem}). For $x\geq\beta_{\epsilon}(\theta)$, $V^{\epsilon}(\cdot,\theta)$ as in (\ref{candidate solution}) satisfies
    \begin{align*}
        V_{x}^{\epsilon}(x,\theta)&=c(x) \\
        \mathcal{L}^{\epsilon}V^{\epsilon}(x,\theta)+\pi(x,\theta)&=\frac{1}{2}\sigma^{2}(x)c_{x}(x)+b(x)c(x)-\frac{\epsilon}{2}\sigma^{2}(x)c^{2}(x)+\pi(x,\theta)\\
        &=\ell^{\epsilon}(x,\theta)\leq \ell^{\epsilon}(\beta_{\epsilon}(\theta),\theta)
    \end{align*}
    where the last inequality comes from the Assumption \ref{Condition of lambdas der 1} as $\beta_{\epsilon}(\theta)\geq\widehat{x}_{\epsilon}(\theta)$ (cf. Proposition \ref{properties of b(a)}-(\ref{infimum of b})). 
    
    On the other hand, for $x\in (0,\beta_{\epsilon}(\theta))$ it is clear that, $V_{x}^{\epsilon}(x,\theta)=\phi_{\beta_{\epsilon}(\theta)}(x,\theta)$. Thanks to Proposition \ref{Candidate upper free-boundaries} and Lemma \ref{Pointwise conv result} we have that $\beta_{\epsilon}(\theta)\in B_{\epsilon}(\theta)$, hence $V_{x}^{\epsilon}(x,\theta)\geq c(x)$. To conclude, the equation
    \begin{equation*}
        \mathcal{L}^{\epsilon}V^{\epsilon}(x,\theta)+\pi(x,\theta)=\ell^{\epsilon}(\beta_{\epsilon}(\theta),\theta)=\lambda^{\epsilon}(\theta),\quad x\in (0,\beta_{\epsilon}(\theta)),
    \end{equation*}
    is satisfied since $\phi_{\beta_{\epsilon}(\theta)}(\cdot,\theta)$ solves (\ref{Auxiliary Second order ODE}).
\end{proof}

\subsection{Verification Result}

The analysis of Section \ref{section: Existence of a solution to First ODE} reveals that there exists a free boundary $\beta_{\epsilon}(\theta)$ splitting the state-space into a region where $\phi_{\beta_{\epsilon}(\theta)}(x,\theta)=c(x)$ (the so-called action region) and a region where $\phi_{\beta_{\epsilon}(\theta)}(x,\theta)>c(x)$ (the so-called inaction region). The next definition introduces the notion of the Skorokhod reflection problem, consisting of finding the couple $(X^{\xi^{*}(\theta)},\xi^{*}(\theta))$ such that $X_{t}^{\xi^{*}(\theta)}\leq \beta_{\epsilon}(\theta),\; \forall t\geq 0\; \mathbb{P}$-a.s.\ The control $\xi^{*}(\theta)$ solving the Skorokhod reflection problem will be shown to be optimal in Theorem \ref{theorem: Verification result}.

\begin{definition}[Skorokhod Reflection Problem]
    \label{def Skorokhod}
    Let $\mathcal{D}[0,\infty)$ be the space of càdlàg processes on $[0,\infty)$. Given $x\in\mathbb{R}_{+}$, $\mathbb{Q}\in\widehat{\mathcal{Q}}(x)$, $\beta\in\mathbb{R}_{+}$, and $\epsilon> 0$, the process $(X^{\xi},\xi)\in \mathcal{D}[0,\infty)\times \mathcal{A}_{e}(x)$ is said to be the solution of the Skorokhod reflection problem $\textbf{SP}(x,\beta;\mathbb{Q},\psi)$ for the $\mathbb{Q}$-Brownian motion $W^{\mathbb{Q}}$ if it satisfies the following properties:
    \begin{enumerate}
        \item \label{item: integral form of reflected diffusion}$X^{\xi}_{t}=x+\int_{0}^{t}(b(X^{\xi}_{s})+\sigma(X^{\xi}_{s})\psi(X^{\xi}_{s}))ds+\int_{0}^{t}\sigma(X^{\xi}_{s})dW^{\mathbb{Q}}_{s}-\xi_{t},\quad \mathbb{Q}\otimes dt$-a.s. \\
        \item \label{item: reflecetion domain}$X^{\xi}_{t}\in (0,\beta],\quad \mathbb{Q}\otimes dt$-a.s. \\
        \item \label{item: positive control} $\int_{0}^{T}\boldsymbol{1}_{\{X^{\xi}_{s}<\beta\}}d\xi_{s}=0, \quad\mathbb{Q}\otimes dt$-a.s.
    \end{enumerate}
\end{definition}
\begin{proposition}
    \label{Prop: Optimal policy}
    For any $\theta\in\mathbb{R}_{+}$, there exists $\xi^{*}(\theta)\in\mathcal{A}_{e}(x)$ such that $(X^{\xi^{*}(\theta)},\xi^{*}(\theta))$ is the unique solution to $\textbf{SP}(x,\beta_{\epsilon}(\theta);\mathbb{Q},\psi)$ with
    \begin{equation}
        \label{eq: Optimal Policy}
        \xi^{*}_{t}(\theta)=\sup_{s\leq t}\big(I(X^{\xi^{*}}_{s}(\theta))-\beta_{\epsilon}(\theta) \big)^{+},
    \end{equation}
    where $I(X)_{t}:=x+\int_{0}^{t}\big(b(X_{s})+\sigma(X_{s})\psi(X_{s})\big)ds+\int_{0}^{t}\sigma(X_{s})dW_{s}^{\mathbb{Q}}$.
\end{proposition}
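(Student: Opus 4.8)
The statement asserts existence and uniqueness of a reflected-diffusion pair $(X^{\xi^{*}(\theta)},\xi^{*}(\theta))$ solving the Skorokhod problem $\textbf{SP}(x,\beta_{\epsilon}(\theta);\mathbb{Q},\psi)$, together with the explicit Tanaka-type representation \eqref{eq: Optimal Policy} of the reflecting control. The plan is to invoke the classical deterministic Skorokhod map. Fix $\theta$, write $\beta:=\beta_{\epsilon}(\theta)$, and recall that under $\mathbb{Q}\in\widehat{\mathcal{Q}}(x)$ Girsanov's theorem (applicable thanks to the Novikov condition \eqref{eq: Novikov condition}) gives a $\mathbb{Q}$-Brownian motion $W^{\mathbb{Q}}$ with $dW_{t}=dW^{\mathbb{Q}}_{t}+\psi(X^{\xi}_{t})\,dt$. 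The unconstrained input path is then $I(X)_{t}=x+\int_{0}^{t}\bigl(b(X_{s})+\sigma(X_{s})\psi(X_{s})\bigr)ds+\int_{0}^{t}\sigma(X_{s})\,dW^{\mathbb{Q}}_{s}$, and the upper-reflection Skorokhod map at level $\beta$ applied pathwise produces the unique pair $(Y,\xi)$ with $Y_{t}=I(X)_{t}-\xi_{t}\le\beta$, $\xi$ nondecreasing with $\xi_{0}=0$, and $\int_{0}^{\cdot}\boldsymbol{1}_{\{Y_{s}<\beta\}}d\xi_{s}=0$; moreover the map has the closed form $\xi_{t}=\sup_{s\le t}(I(X)_{s}-\beta)^{+}$, which is exactly \eqref{eq: Optimal Policy}.

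First I would set up the fixed-point (strong-solution) argument: because $b$, $\sigma$, $\psi$ are locally Lipschitz (Assumption \ref{Assumption for dynamics of SDE}-\eqref{Cont drift and vol} and the standing hypotheses on $\psi$ in Definition \ref{Admissible strategies}), the coefficients $x\mapsto b(x)+\sigma(x)\psi(x)$ and $x\mapsto\sigma(x)$ are locally Lipschitz, and the Lipschitz property of the one-sided Skorokhod map on $\mathcal{D}[0,\infty)$ (in the sup-norm on compacts) lets me run a Picard iteration / Banach fixed-point scheme on $[0,T]$ for each $T$ to obtain a unique strong solution $X^{\xi^{*}(\theta)}$ of the reflected SDE in \eqref{item: integral form of reflected diffusion}. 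Then I would verify the three defining properties of the Skorokhod problem: property \eqref{item: integral form of reflected diffusion} holds by construction; property \eqref{item: reflecetion domain}, namely $X^{\xi^{*}}_{t}\in(0,\beta]$, follows because the Skorokhod map keeps the path $\le\beta$, and the strict positivity $X^{\xi^{*}}_{t}>0$ follows from Assumption \ref{Ass: Non explosion} (zero is unattainable for the diffusion part, and reflection only pushes downward from $\beta>0$, never toward $0$) — here one can compare with the uncontrolled process $X^{0}$ via the scale function $S^{\mathbb{Q}}$ of Definition \ref{Admissible strategies} and its finiteness near $0$; property \eqref{item: positive control} is the Skorokhod minimality condition, built into the map. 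Finally I would check $\xi^{*}(\theta)\in\mathcal{A}_{e}(x)$ by confirming the integrability bounds in Definition \ref{def: set of admissible controls}: the moment estimates on $\xi^{*}_{T}$, $X^{\xi^{*}}_{T}$, and the exponential term, together with $\frac{1}{T}\mathbb{E}^{\mathbb{P}}_{x}[|X^{\xi^{*}}_{T}|^{3}]\to0$, come from standard Gronwall/BDG estimates applied to the reflected dynamics, using the linear growth Assumption \ref{Assumption for dynamics of SDE}-\eqref{Linear growth of drift and vol}, the boundedness of $\psi$ and of $c$, and the fact that $X^{\xi^{*}}$ is bounded above by $\max(x,\beta)$ plus the diffusion, so stationarity-type tail bounds apply.

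The main obstacle I expect is twofold. First, the reflected SDE has locally (not globally) Lipschitz and only linearly growing coefficients, so the fixed-point argument must be localized (stopping times $\tau_{n}=\inf\{t:X^{\xi^{*}}_{t}\notin(1/n,n)\}$ or a truncation of the coefficients) and then one must rule out explosion and rule out the process reaching $0$ in finite time; the latter is precisely where Assumption \ref{Ass: Non explosion} and the boundedness/local-Lipschitz hypotheses on $\psi$ enter, guaranteeing that the left boundary $0$ is inaccessible even after the change of measure. Second, verifying that $\xi^{*}(\theta)$ actually lies in $\mathcal{A}_{e}(x)$ — in particular the finiteness of $\mathbb{E}^{\mathbb{P}}_{x}[\exp(\tfrac12\int_{0}^{T}|X^{\xi^{*}}_{t}|^{4}dt)]$ and the $o(T)$ decay of $\mathbb{E}^{\mathbb{P}}_{x}[|X^{\xi^{*}}_{T}|^{3}]$ — is the delicate integrability point; since the controlled process is capped at $\max(x,\beta_{\epsilon}(\theta))$ from above, these follow from uniform-in-time moment bounds for one-sided reflected diffusions, but one must be careful that the bounds are under $\mathbb{P}$ while the dynamics are naturally phrased under $\mathbb{Q}$, so an application of Hölder's inequality together with the $q$-integrability condition \eqref{cond: q-integrability of psi} on $\psi$ is needed to transfer the estimates. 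Everything else is an application of the classical deterministic Skorokhod lemma (see Tanaka \cite{Tanaka}) and routine SDE estimates.
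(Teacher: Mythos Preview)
Your plan is correct and aligns with the paper's approach, but you are working harder than necessary in two places. First, the paper simply invokes Theorem~4.1 of Tanaka \cite{Tanaka} for existence and pathwise uniqueness of the reflected SDE (the hypotheses hold because $b,\sigma$ are $C^{1}$ and $\psi$ is bounded and locally Lipschitz by Definition~\ref{Admissible strategies}), and then observes that the explicit formula \eqref{eq: Optimal Policy} satisfies the three Skorokhod conditions, so by uniqueness it coincides with Tanaka's solution; there is no need to set up a Picard scheme by hand, since you would merely be reproducing Tanaka's proof. Second, and more importantly, your concern about transferring moment estimates from $\mathbb{Q}$ to $\mathbb{P}$ via H\"older and \eqref{cond: q-integrability of psi} is misplaced: the inclusion $X^{\xi^{*}(\theta)}_{t}\in(0,\beta_{\epsilon}(\theta)]$ (for $t>0$, after the possible initial jump) is a \emph{pathwise} statement, and since $\mathbb{P}\sim\mathbb{Q}$ it holds $\mathbb{P}$-a.s.\ as well. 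Consequently every integrability requirement in Definition~\ref{def: set of admissible controls}---the moments of $X^{\xi^{*}}_{T}$, the exponential integral $\mathbb{E}^{\mathbb{P}}_{x}[\exp(\tfrac{1}{2}\int_{0}^{T}|X^{\xi^{*}}_{t}|^{4}dt)]$, and the $o(T)$ decay of $\mathbb{E}^{\mathbb{P}}_{x}[|X^{\xi^{*}}_{T}|^{3}]$---is immediate from the fact that the state lives in a fixed bounded interval, with no Gronwall, BDG, or change-of-measure argument needed; the finiteness of $\mathbb{E}^{\mathbb{P}}_{x}[\xi^{*}_{T}{}^{p}]$ follows similarly because $I(X^{\xi^{*}})$ is an It\^o process whose coefficients are bounded on $(0,\beta_{\epsilon}(\theta)]$. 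The paper's proof is accordingly just a few lines: cite Tanaka, match the formula to the unique solution, use Assumption~\ref{Ass: Non explosion} for non-attainability of $0$, and conclude admissibility from boundedness.
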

\begin{proof}
    Since the properties of Theorem 4.1 in \cite{Tanaka} are satisfied (recall that $\psi$ is bounded and locally-Lipschitz continuous as $\mathbb{Q}\in \widehat{\mathcal{Q}}(x)$; cf.\ Definition \ref{Admissible strategies}), we obtain that $\textbf{SP}(x,\beta;\mathbb{Q},\psi)$ has a unique solution $(X^{\overline{\xi}},\overline{\xi})$ and $X^{\overline{\xi}}$ is pathwise unique. Thanks to (\ref{eq: Optimal Policy}), it is standard to see that $(X^{\xi^{*}(\theta)},\xi^{*}(\theta))$ satisfies the properties of $\textbf{SP}(x,\beta;\mathbb{Q},\psi)$, and by uniqueness we conclude that $\overline{\xi}=\xi^{*}(\theta)$. Furthermore, under Assumption \ref{Ass: Non explosion}, the state space of the optimally controlled process $X^{\xi^{*}(\theta)}$ is $(0,\beta_{\epsilon}(\theta)]$, with $0$ being not attainable and $\beta_{\epsilon}(\theta)$ being reflecting. It this then a standard result in the theory of one-dimensional diffusion that the process $X^{\xi^{*}(\theta)}$ cannot reach $0$ in finite time with positive probability. Finally, since $X^{\xi^{*}(\theta)}_{t}\in (0,\beta_{\epsilon}(\theta)],\; \mathbb{Q}\otimes dt$-a.s., the integrability conditions in Definition \ref{def: set of admissible controls} are satisfied. Thus, $\xi^{*}(\theta)\in\mathcal{A}_{e}(x)$.  
\end{proof}

As in \cite{CohenHening} and \cite{LiangLiuZervos} (see Remark 5 in \cite{CohenHening} and Section 5 in \cite{LiangLiuZervos}), we also have that the potential function $V^{\epsilon}$ is unbounded as $x\downarrow 0$, namely $V^{\epsilon}(x,\theta)\downarrow -\infty$ as $x\downarrow 0$ (see Proposition \ref{prop: uniform boundness of phi}).\ Such unboundedness of $V^{\epsilon}$ requires extra care in the proof of verification theorem \ref{theorem: Verification result} below. For this reason a truncated version of the potential function will be introduced in Step 2 of the proof of Theorem \ref{theorem: Verification result}, extending to our setting with non constant instantaneous reward from exerting control the approach developed in \cite{LiangLiuZervos}.

\begin{theorem}[Verification Theorem]
    \label{theorem: Verification result}
    Let Assumptions \ref{Assumption for dynamics of SDE}, \ref{Ass: Non explosion}, \ref{Assumptions for profit fun} and \ref{Assumption for l} hold. For every $x\in\mathbb{R}_{+}$, let $\xi^{*}(\theta)$ solve $\textbf{SP}(x,\beta_{\epsilon}(\theta);\mathbb{Q}^{*},\psi^{*})$, where $\mathbb{Q}^{*}(\theta)\in\widehat{Q}(x)$ is such that $\frac{d\mathbb{Q}^{*}}{d\mathbb{P}}\big|_{\mathcal{F}_{t}}:=\psi_{t}^{*}$, with $\psi_{t}^{*}:=-\epsilon\sigma(X^{\xi^{*}(\theta)}_{t})V_{x}^{\epsilon}(X^{\xi^{*}(\theta)}_{t},\theta),\; \mathbb{Q}^{*}\otimes dt-$a.s. Then $(\xi^{*}(\theta),\mathbb{Q}^{*}(\theta))\in\mathcal{A}_{e}(x)\times\widehat{\mathcal{Q}}(x)$ realizes a saddle point in (\ref{value function}) and
    \begin{align}
        \label{optimal eigenvalue}
        \lambda^{\epsilon}(\theta)=b(\beta_{\epsilon}(\theta))c(\beta_{\epsilon}(\theta))+\pi(\beta_{\epsilon}(\theta),\theta)-\frac{1}{2}\sigma^{2}(\beta_{\epsilon}(\theta))\big(\epsilon c^{2}(\beta_{\epsilon}(\theta))-c_{x}(\beta_{\epsilon}(\theta))\big).
    \end{align}
\end{theorem}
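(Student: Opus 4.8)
The plan is a verification argument built on the $C^{2}$ pair $(V^{\epsilon}(\cdot,\theta),\lambda^{\epsilon}(\theta))$ solving (\ref{free-boundary problem}), produced by Theorem \ref{E&U of free boundary problem}, together with the explicit candidate saddle point: $\xi^{*}(\theta)$ from Proposition \ref{Prop: Optimal policy} and $\mathbb{Q}^{*}(\theta)$ with Girsanov kernel $\psi^{*}(x)=-\epsilon\sigma(x)V_{x}^{\epsilon}(x,\theta)$. First I would record that $(\xi^{*}(\theta),\mathbb{Q}^{*}(\theta))\in\mathcal{A}_{e}(x)\times\widehat{\mathcal{Q}}(x)$: the membership of $\xi^{*}(\theta)$ is Proposition \ref{Prop: Optimal policy}, while $\mathbb{Q}^{*}(\theta)\in\widehat{\mathcal{Q}}(x)$ follows because $V_{x}^{\epsilon}(\cdot,\theta)$ equals $\phi_{\beta_{\epsilon}(\theta)}(\cdot,\theta)$ on $(0,\beta_{\epsilon}(\theta))$---bounded by Proposition \ref{prop: uniform boundness of phi}---and $-c$ beyond $\beta_{\epsilon}(\theta)$, hence is bounded and $C^{1}$; consequently $\psi^{*}$ is locally Lipschitz and, evaluated along $X^{\xi^{*}(\theta)}$ which never leaves $(0,\beta_{\epsilon}(\theta)]$, bounded, so (\ref{eq: Novikov condition}), (\ref{cond: q-integrability of psi}) and the well-posedness of $S^{\mathbb{Q}^{*}}_{x}$ all hold.

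The core step is a change-of-variables expansion. Since $V^{\epsilon}(\cdot,\theta)\in C^{2}(\mathbb{R}_{+})$, for an arbitrary $(\xi,\mathbb{Q})\in\mathcal{A}_{e}(x)\times\widehat{\mathcal{Q}}(x)$ with kernel $\psi$ I would apply It\^o's formula to $V^{\epsilon}(X^{\xi}_{t},\theta)$ under $\mathbb{Q}$ using the dynamics in Definition \ref{def Skorokhod}-(\ref{item: integral form of reflected diffusion}), handling the jumps of $\xi$ through the convention (\ref{propotional costs integral}) so that the singular contribution is exactly $-\int_{0}^{T}V_{x}^{\epsilon}(X^{\xi}_{t},\theta)\circ d\xi_{t}$; taking $\mathbb{E}^{\mathbb{Q}}_{x}$ after a localization making the stochastic integral a genuine martingale and rearranging gives
\begin{align*}
\mathbb{E}^{\mathbb{Q}}_{x}\Big[\int_{0}^{T}\!\pi(X^{\xi}_{t},\theta)\,dt-\int_{0}^{T}\!c(X^{\xi}_{t})\circ d\xi_{t}+\frac{1}{2\epsilon}\int_{0}^{T}\!\psi_{t}^{2}\,dt\Big]
&=V^{\epsilon}(x,\theta)-\mathbb{E}^{\mathbb{Q}}_{x}\big[V^{\epsilon}(X^{\xi}_{T},\theta)\big]\\
&\quad+\mathbb{E}^{\mathbb{Q}}_{x}\Big[\int_{0}^{T}\!\Big(\mathcal{L}^{\psi,\epsilon}V^{\epsilon}+\frac{1}{2\epsilon}\psi_{t}^{2}+\pi\Big)(X^{\xi}_{t},\theta)\,dt\Big]\\
&\quad-\mathbb{E}^{\mathbb{Q}}_{x}\Big[\int_{0}^{T}\!\big(V_{x}^{\epsilon}(X^{\xi}_{t},\theta)+c(X^{\xi}_{t})\big)\circ d\xi_{t}\Big].
\end{align*}
From (\ref{free-boundary problem}) one has $\mathcal{L}^{\epsilon}V^{\epsilon}+\pi\le\lambda^{\epsilon}(\theta)$ on $\mathbb{R}_{+}$, with equality on $(0,\beta_{\epsilon}(\theta)]$, and $V_{x}^{\epsilon}+c\ge0$ on $\mathbb{R}_{+}$, with equality at $\beta_{\epsilon}(\theta)$; moreover $\mathcal{L}^{\psi,\epsilon}V^{\epsilon}+\frac{1}{2\epsilon}\psi^{2}\ge\mathcal{L}^{\epsilon}V^{\epsilon}$ for every $\psi$, with equality precisely for $\psi=\psi^{*}$.

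Combining these facts in the identity proves the saddle property. Choosing $\mathbb{Q}=\mathbb{Q}^{*}(\theta)$ makes the $dt$-integrand equal $\mathcal{L}^{\epsilon}V^{\epsilon}+\pi\le\lambda^{\epsilon}(\theta)$ and the $d\xi$-term nonpositive, so after dividing by $T$, letting $T\uparrow\infty$, using $\frac{1}{T}V^{\epsilon}(x,\theta)\to0$ and the linear lower bound $V^{\epsilon}(y,\theta)\ge-\overline{c}\,y-K$ read off from (\ref{candidate solution}), I obtain $J^{\epsilon}(x;\xi,\mathbb{Q}^{*}(\theta),\theta)\le\lambda^{\epsilon}(\theta)$ for every $\xi\in\mathcal{A}_{e}(x)$; here the only subtle point is $\frac{1}{T}\mathbb{E}^{\mathbb{Q}^{*}(\theta)}_{x}[X^{\xi}_{T}]\to0$, which follows from $\limsup_{T}\frac{1}{T}\mathbb{E}^{\mathbb{P}}_{x}[|X^{\xi}_{T}|^{3}]=0$ via H\"older's inequality and the $L^{q}$-control of $\frac{d\mathbb{Q}^{*}}{d\mathbb{P}}$. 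Choosing $\xi=\xi^{*}(\theta)$: by Definition \ref{def Skorokhod}-(\ref{item: positive control}) the $d\xi$-term vanishes (the reflection increases only at $\beta_{\epsilon}(\theta)$, where $V_{x}^{\epsilon}+c=0$), and since $X^{\xi^{*}(\theta)}$ stays in $(0,\beta_{\epsilon}(\theta)]$, where $\mathcal{L}^{\epsilon}V^{\epsilon}+\pi=\lambda^{\epsilon}(\theta)$, the $dt$-integrand is $\ge\lambda^{\epsilon}(\theta)$; boundedness of $V^{\epsilon}(X^{\xi^{*}(\theta)}_{T},\theta)$ then yields $J^{\epsilon}(x;\xi^{*}(\theta),\mathbb{Q},\theta)\ge\lambda^{\epsilon}(\theta)$ for every $\mathbb{Q}\in\widehat{\mathcal{Q}}(x)$. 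At $(\xi^{*}(\theta),\mathbb{Q}^{*}(\theta))$ all these inequalities collapse to equalities, so $J^{\epsilon}(x;\xi^{*}(\theta),\mathbb{Q}^{*}(\theta),\theta)=\lambda^{\epsilon}(\theta)$; the three relations together say $(\xi^{*}(\theta),\mathbb{Q}^{*}(\theta))$ is a saddle point of (\ref{value function}) and $\lambda^{\epsilon}(\theta)=\sup_{\xi}\inf_{\mathbb{Q}}J^{\epsilon}(x;\xi,\mathbb{Q},\theta)$. Finally, (\ref{optimal eigenvalue}) is merely the definition $\lambda^{\epsilon}(\theta)=\ell^{\epsilon}(\beta_{\epsilon}(\theta),\theta)$ of Theorem \ref{E&U of free boundary problem} with $\ell^{\epsilon}$ written out via (\ref{function for eigenvalue}).

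I expect the main obstacle to be the integrability bookkeeping rather than the structural argument: showing that the stochastic integral in the It\^o expansion is a true---not merely local---$\mathbb{Q}$-martingale on all of $[0,\infty)$, and controlling $\frac{1}{T}\mathbb{E}^{\mathbb{Q}^{*}(\theta)}_{x}[V^{\epsilon}(X^{\xi}_{T},\theta)]$ uniformly enough to pass to the limit. This is exactly where the exponential moment, the $p$-th moment of $\xi_{T}$, the $2q$-th moment of $X^{\xi}_{T}$, the $\frac{1}{T}$-decay of $\mathbb{E}^{\mathbb{P}}_{x}[|X^{\xi}_{T}|^{3}]$, and the Novikov and $q$-integrability bounds on $\psi$ from Definitions \ref{def: set of admissible controls} and \ref{Admissible strategies} are needed. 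One should also be careful that (\ref{second version of functional}) involves $\liminf_{T\to\infty}$: the upper bound is obtained along $\limsup$ and the lower along $\liminf$, the two being reconciled by the exact evaluation at the saddle point.
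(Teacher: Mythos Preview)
Your proposal is correct and follows essentially the same verification route as the paper: a two-sided It\^o--Meyer expansion of $V^{\epsilon}(X^{\xi}_{t},\theta)$, then specialising once to $\xi=\xi^{*}(\theta)$ (yielding $J^{\epsilon}(x;\xi^{*}(\theta),\mathbb{Q},\theta)\ge\lambda^{\epsilon}(\theta)$ for all $\mathbb{Q}$) and once to $\mathbb{Q}=\mathbb{Q}^{*}(\theta)$ (yielding $J^{\epsilon}(x;\xi,\mathbb{Q}^{*}(\theta),\theta)\le\lambda^{\epsilon}(\theta)$ for all $\xi$), with (\ref{optimal eigenvalue}) read off from $\lambda^{\epsilon}(\theta)=\ell^{\epsilon}(\beta_{\epsilon}(\theta),\theta)$.

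One small point to tighten: your admissibility argument for $\mathbb{Q}^{*}(\theta)$ relies on $\psi^{*}$ being bounded \emph{along $X^{\xi^{*}(\theta)}$}, but Definition~\ref{Admissible strategies} asks the Novikov and $q$-integrability conditions to hold for every $\xi\in\mathcal{A}_{e}(x)$, where $X^{\xi}$ need not stay in $(0,\beta_{\epsilon}(\theta)]$. The paper covers this by combining the uniform bound on $V_{x}^{\epsilon}$ (Proposition~\ref{prop: uniform boundness of phi}) with the growth of $\sigma$ (Assumption~\ref{Assumption for dynamics of SDE}-(\ref{Linear growth of drift and vol})) to get a polynomial bound $|\psi^{*}(X^{\xi}_{t})|\le K(1+|X^{\xi}_{t}|^{2})$, which then feeds into the exponential-moment and $|X^{\xi}_{T}|^{2q}$ conditions of Definition~\ref{def: set of admissible controls}. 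Apart from this, your identification of the integrability bookkeeping---localisation, the linear growth estimate on $V^{\epsilon}$, and the $\liminf/\limsup$ interplay---matches the paper's handling.
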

\begin{proof}
    We split the proof into two steps.

    \vspace{0.25 cm}
    
    \textbf{Step 1:} Let $T>0$, $x\in\mathbb{R}_{+}$ and $\epsilon\geq 0$. Recall Theorem \ref{E&U of free boundary problem} and introduce a sequence of $\mathbb{F}-$stopping times $(\tau_{n}^{*}(\theta))_{n\in\mathbb{N}}$ such that $\tau_{n}^{*}(\theta):=\inf\{t\geq 0: X^{\xi^{*}(\theta)}_{t}\notin [1/n,n]\}$. Then, fixing $\mathbb{Q}\in\widehat{\mathcal{Q}}(x)$ with Girsanov kernel $\psi$ as in Definition \ref{Admissible strategies} and applying Itô-Meyer's formula to $(V^{\epsilon}(X^{\xi^{*}(\theta)}_{T\wedge \tau_{n}^{*}(\theta)},\theta))_{T\geq 0}$, we have under $\mathbb{Q}$ (cf. also (\ref{propotional costs integral})) (recalling that $\xi^{*}(\theta)$ may cause a jump only at $t=0$)
    \begin{align}
        \label{Ito formula}
        V^{\epsilon}(X^{\xi^{*}(\theta)}_{T\wedge \tau_{n}^{*}(\theta)},\theta)=&V^{\epsilon}(x,\theta)+\int_{0}^{T\wedge \tau_{n}^{*}(\theta)}\bigg(\frac{1}{2}\sigma^{2}(X^{\xi^{*}(\theta)}_{s})V^{\epsilon}_{xx}(X^{\xi^{*}(\theta)}_{s},\theta)+\big(b(X^{\xi^{*}(\theta)}_{s})\notag\\
        &+\sigma(X^{\xi^{*}(\theta)}_{s})\psi_{s}\big)V^{\epsilon}_{x}(X^{\xi^{*}(\theta)}_{s},\theta)\bigg)ds+\int_{0}^{T\wedge \tau_{n}^{*}(\theta)}\sigma(X^{\xi^{*}(\theta)}_{s})V^{\epsilon}_{x}(X^{\xi^{*}(\theta)}_{s},\theta)dW^{\mathbb{Q}}_{s} \notag \\
        &-\int_{0}^{T\wedge \tau_{n}^{*}(\theta)}V^{\epsilon}_{x}(X^{\xi^{*}(\theta)}_{s},\theta)d(\xi^{*})_{s}^{c}(\theta)-\big(V^{\epsilon}(X^{\xi^{*}(\theta)}_{0+})-V^{\epsilon}(X^{\xi^{*}(\theta)}_{0})\big).
    \end{align}
    Noting that by continuity of $\sigma(\cdot) V_{x}^{\epsilon}(\cdot,\theta)$ we have $\mathbb{E}^{\mathbb{Q}}_{x}\bigg[ \int_{0}^{T\wedge \tau_{n}^{*}(\theta)}\sigma(X^{\xi^{*}(\theta)}_{s})V^{\epsilon}_{x}(X^{\xi^{*}(\theta)}_{s},\theta)dW_{s} \bigg]=0,\; \text{for any }T>0,\;n\in\mathbb{N}$, by taking expectations in (\ref{Ito formula}) with respect to $\mathbb{Q}$ we obtain 
    \begin{align}
        \label{expcted Ito formula}
        \mathbb{E}^{\mathbb{Q}}_{x}\big[ V^{\epsilon}(X^{\xi^{*}(\theta)}_{T\wedge \tau_{n}^{*}(\theta)},\theta)\big]&\geq V^{\epsilon}(x,\theta)+\mathbb{E}^{\mathbb{Q}}_{x}\bigg[\int_{0}^{T\wedge \tau_{n}^{*}(\theta)}\big( \lambda^{\epsilon}(\theta)-\pi(X^{\xi^{*}(\theta)}_{s},\theta)-\frac{1}{2\epsilon}(\psi_{s})^{2}\big)ds\bigg] \notag \\ &\quad-\mathbb{E}^{\mathbb{Q}}_{x}\bigg[\int_{0}^{T\wedge \tau_{n}^{*}(\theta)}V^{\epsilon}_{x}(X^{\xi^{*}(\theta)}_{s},\theta)d(\xi^{*})_{s}^{c}(\theta)\bigg] \notag\\
        &\quad-\boldsymbol{1}_{\{x>\beta_{\epsilon}(\theta)\}}\int_{0}^{x-\beta_{\epsilon}(\theta)}V^{\epsilon}_{x}(x-r,\theta)dr, 
    \end{align}
     where we have also used the fact that, for any admissible $\psi$, $V^{\epsilon}(\cdot,\theta)$ satisfies (cf. (\ref{free-boundary problem}))
     \begin{align*}
         \frac{1}{2}\sigma^{2}(X^{\xi^{*}(\theta)}_{s})V^{\epsilon}_{xx}(X^{\xi^{*}(\theta)}_{s},\theta)&+\big(b(X^{\xi^{*}(\theta)}_{s})+\sigma(X^{\xi^{*}(\theta)}_{s})\psi_{s}\big)V^{\epsilon}_{x}(X^{\xi^{*}(\theta)}_{s},\theta)\\ &\geq \lambda^{\epsilon}(\theta)-\pi(X^{\xi^{*}(\theta)}_{s},\theta)-\frac{1}{2\epsilon}(\psi_{s})^{2},\quad \mathbb{Q}\otimes dt-\text{a.s.},
     \end{align*}
     and  
     \begin{equation}
        \label{eq: value function mean value theorem}
         \big(V^{\epsilon}(X^{\xi^{*}(\theta)}_{0+})-V^{\epsilon}(X^{\xi^{*}(\theta)}_{0})\big)=\boldsymbol{1}_{\{x>\beta_{\epsilon}(\theta)\}}\int_{0}^{x-\beta_{\epsilon}(\theta)}V^{\epsilon}_{x}(x-r,\theta)dr.
     \end{equation}
     Since, $\int_{0}^{\infty}\boldsymbol{1}_{(0,\beta_{\epsilon}(\theta))}(X^{\xi^{*}(\theta)}_{s})d(\xi^{*})_{s}(\theta)=0$, and $V^{\epsilon}_{x}(X^{\xi^{*}(\theta)}_{t},\theta)=c(X^{\xi^{*}(\theta)}_{t})$ when $X^{\xi^{*}(\theta)}_{t}\geq\beta_{\epsilon}(\theta)$ $\;\mathbb{Q}\otimes dt-$a.s., from (\ref{expcted Ito formula}) we have
    \begin{align}
        \mathbb{E}^{\mathbb{Q}}_{x}\big[ V^{\epsilon}(X^{\xi^{*}(\theta)}_{T\wedge \tau_{n}^{*}(\theta)},\theta)\big]&\geq V^{\epsilon}(x,\theta)+\mathbb{E}^{\mathbb{Q}}_{x}\bigg[\int_{0}^{T\wedge \tau_{n}^{*}(\theta)}\big( \lambda^{\epsilon}(\theta)-\pi(X^{\xi^{*}(\theta)}_{s},\theta)-\frac{1}{2\epsilon}(\psi_{s}(X_{s}^{\xi^{*}(\theta)}))^{2}\big)ds\bigg] \notag \\ &\quad-\mathbb{E}^{\mathbb{Q}}_{x}\bigg[\int_{0}^{T\wedge \tau_{n}^{*}(\theta)}c(X^{\xi^{*}(\theta)}_{s})d(\xi^{*})^{c}_{s}(\theta)\bigg]-\boldsymbol{1}_{\{x>\beta_{\epsilon}(\theta)\}}\int_{0}^{x-\beta_{\epsilon}(\theta)}c(x-r)dr.
    \end{align}
    Rearranging the terms and recalling (\ref{propotional costs integral}) we find
    \begin{align}
        \label{rearrnged terms ito formula}
        \mathbb{E}^{\mathbb{Q}}_{x}\big[ V^{\epsilon}(X^{\xi^{*}(\theta)}_{T\wedge \tau_{n}^{*}(\theta)},\theta)\big]&+\mathbb{E}^{\mathbb{Q}}_{x}\bigg[\int_{0}^{T\wedge \tau_{n}^{*}(\theta)}\big(\pi(X^{\xi^{*}(\theta)}_{s},\theta)+\frac{1}{2\epsilon}(\psi_{s}(X_{s}^{\xi^{*}(\theta)}))^{2}\big)\bigg]\geq V^{\epsilon}(x,\theta)\notag \\ &+\lambda^{\epsilon}(\theta)\mathbb{E}^{\mathbb{Q}}_{x}\big[T\wedge \tau_{n}^{*}(\theta)\big]-\mathbb{E}^{\mathbb{Q}}_{x}\bigg[\int_{0}^{T\wedge \tau_{n}^{*}(\theta)}c(X^{\xi^{*}(\theta)}_{s})\circ d(\xi^{*})_{s}(\theta)\bigg].
    \end{align}
    Given that $X^{\xi^{*}(\theta)}_{t}\in (0,\beta_{\epsilon}(\theta)]
    ;\mathbb{Q}\otimes dt-$a.s., passing to the limits as $n\uparrow\infty$ in (\ref{rearrnged terms ito formula}), invoking monotone and dominated convergence theorems, we can exchange limits with expectations and obtain
    \begin{align}
        \label{rearrnged terms ito formula without n}
        \mathbb{E}^{\mathbb{Q}}_{x}\big[ V^{\epsilon}(X^{\xi^{*}(\theta)}_{T},\theta)\big]&+\mathbb{E}^{\mathbb{Q}}_{x}\bigg[\int_{0}^{T}\big(\pi(X^{\xi^{*}(\theta)}_{s},\theta)+\frac{1}{2\epsilon}(\psi_{s}(X^{\xi^{*}(\theta)}_{s})^{2}\big)\bigg]\geq V^{\epsilon}(x,\theta)\notag \\ &+\lambda^{\epsilon}(\theta)T-\mathbb{E}^{\mathbb{Q}}_{x}\bigg[\int_{0}^{T}c(X^{\xi^{*}(\theta)}_{s})\circ d(\xi^{*})_{s}(\theta)\bigg].
    \end{align}
    Rearranging terms, dividing by $T$ and sending $T\uparrow\infty$ in (\ref{rearrnged terms ito formula without n}) we obtain that
    \begin{align}
        \liminf_{T\uparrow\infty}\frac{1}{T}\mathbb{E}^{\mathbb{Q}}_{x}&\bigg[\int_{0}^{T}\big(\pi(X^{\xi^{*}(\theta)}_{s},\theta)+\frac{1}{2\epsilon}(\psi_{s}(X_{s}^{\xi^{*}(\theta)}))^{2}\big)+\int_{0}^{T}c(X^{\xi^{*}(\theta)}_{s})\circ d(\xi^{*})_{s}(\theta)\bigg] \notag \\
        &\geq \lambda^{\epsilon}(\theta)-\limsup_{T\uparrow\infty}\frac{1}{T}\mathbb{E}^{\mathbb{Q}}_{x}\big[ V^{\epsilon}(X^{\xi^{*}(\theta)}_{T},\theta)\big].
    \end{align}
    Hence, given again that $X_{t}^{\xi^{*}(\theta)}\in (0,\beta_{\epsilon}(\theta)],\;\mathbb{Q}\otimes dt-$a.s. and $V^{\epsilon}(\cdot,\theta)$ is continuous on $(0,\beta_{\epsilon}(\theta)]$, we find
    \begin{align}
        \liminf_{T\uparrow\infty}\frac{1}{T}\mathbb{E}^{\mathbb{Q}}_{x}&\bigg[\int_{0}^{T}\big(\pi(X^{\xi^{*}(\theta)}_{s},\theta)+\frac{1}{2\epsilon}(\psi_{s}(X_{s}^{\xi^{*}(\theta)}))^{2}\big)+\int_{0}^{T}c(X^{\xi^{*}(\theta)}_{s})\circ d(\xi^{*})_{s}(\theta)\bigg]\geq \lambda^{\epsilon}(\theta);
    \end{align}
    that is,
    \begin{equation}
        \label{eq: last ineq step 1}
        J^{\epsilon}(x;\xi^{*}(\theta),\mathbb{Q},\theta)\geq \lambda^{\epsilon}(\theta),\quad\text{for any }\mathbb{Q}\in\widehat{\mathcal{Q}}(x).
    \end{equation}
    Hence,
    \begin{equation}
        \sup_{\xi\in\mathcal{A}_{e}(x)}\inf_{\mathbb{Q}\in\widehat{\mathcal{Q}}(x)}J^{\epsilon}(x;\xi,\mathbb{Q},\theta)\geq\lambda^{\epsilon}(\theta)
    \end{equation}
    \textbf{Step 2:}
    Let $x>0$, $\xi\in\mathcal{A}_{e}(x)$, and $\beta\in (\widehat{x}_{\epsilon}(\theta),\beta_{\epsilon}(\theta))$. We introduce the sequence of $\mathbb{F}$-stopping times $(\tau_{n})_{n\in\mathbb{N}}$ defined by $\tau_{n}:=\inf\{t\geq 0: X^{\xi}_{t}\notin [1/n,n]\}$. If $\phi_{\beta}$ is a solution to (\ref{Auxiliary Second order ODE}), we define the truncated version of the potential gradient $\overline{\phi}_{\beta}$ as
    \begin{equation}
        \label{eq: truncated phi}
        \overline{\phi}_{\beta}(x,\theta):=\begin{cases}
            \phi_{\beta}(x,\theta),\quad &x>\alpha_{\beta}, \\
            c(x),\quad &x\leq \alpha_{\beta},
        \end{cases}
    \end{equation}
    where $\alpha_{\beta}:=\inf\{x>0:\phi_{\beta}(x,\theta)=c(x)\}$. Provided that $\alpha_{\beta}$ exists and is finite, the structure of the truncated gradient potential function ensures that $\overline{\phi}_{\beta}(\cdot,\theta)\in C(\mathbb{R}_{+})$. Since $\beta<\beta_{\epsilon}(\theta)$, Lemma \ref{lemma: truncated phi} implies that $\alpha_{\beta}<\widehat{x}_{\epsilon}(\theta)$ and 
    $$\alpha_{\beta}\to 0, \quad \text{as}\quad \beta\to\beta_{\epsilon}(\theta)^{-}.$$ 
    We then define $\overline{V}^{\beta}(x,\theta):=\overline{V}^{\beta}(\alpha_{\beta},\theta)+\int_{\alpha_{\beta}}^{x}\overline{\phi}_{\beta}(y,\theta)dy$ for $x\in (0,\infty)$, which is $C^{1}(\mathbb{R}_{+})\cap C^{2}(\mathbb{R}_{+}\setminus\{\alpha_{\beta}\})$.
    Recalling the structure of the operator $\mathcal{L}^{\epsilon}$ in (\ref{generator}), we define a measure $\mathbb{Q}^{\beta}(\theta)\in \widehat{\mathcal{Q}}(x)$ with the Girsanov kernel $\psi^{\beta}$ given by
    \begin{align*}
        \psi^{\beta}_{t}:&=\arg \min_{\psi_{t}}\bigg\{ \frac{1}{2}\sigma^{2}(X^{x,\xi}_{t})\overline{V}^{\beta}_{xx}(X^{x,\xi}_{t},\theta)+\big(b(X^{x,\xi}_{t})+\sigma(X^{x,\xi}_{t})\psi_{t}\big)\overline{V}^{\beta}_{x}(X^{x,\xi}_{t},\theta)+\frac{1}{2\epsilon}\psi^{2}_{t} \bigg\} \\
        &=\arg \min_{\psi_{t}}\bigg\{ \sigma(X^{x,\xi}_{t})\psi_{t}\overline{V}^{\beta}_{x}(X^{x,\xi}_{t},\theta)+\frac{1}{2\epsilon}\psi^{2}_{t} \bigg\} \\
        &=-\epsilon\sigma(X^{x,\xi}_{t})\overline{V}^{\beta}_{x}(X^{x,\xi}_{t},\theta),\quad \mathbb{P}\otimes dt\text{-a.s.}
    \end{align*}
    We claim that $\mathbb{Q}^{\beta}(\theta)$ is admissible according to Definition \ref{Admissible strategies}. 
    First, given that $\overline{V}^{\beta}_{x}(x,\theta)=\overline{\phi}_{\beta}(x,\theta),x\in (0,\infty)$, we obtain from Proposition \ref{section: Existence of a solution to First ODE} and Assumption \ref{Assumption for dynamics of SDE} that the mapping $x\mapsto -\epsilon\sigma^{2}(x)\overline{V}^{\beta}_{x}(x,\theta)$ is locally Lipschitz. Consequently, $S_{x}^{\mathbb{Q}^{\beta}(\theta)}$ as in (\ref{eq: speed measure wrt Q}) satisfies Assumption \ref{Ass: Non explosion}. Furthermore, from Assumption \ref{Assumption for dynamics of SDE}-(\ref{Linear growth of drift and vol}) and the structure of $\overline{\phi}_{\beta}$, there exists a constant $K(\beta,\theta)>0$ such that $|\psi^{\beta}_{t}|^{2}\leq K(\beta,\theta)(1+|X^{\xi}_{t}|^{2\zeta}),\; \mathbb{P}\otimes dt$-a.s. Hence, (\ref{eq: Novikov condition}) holds because $\xi\in\mathcal{A}_{e}(x)$ and due to the sublinear growth of $\sigma$. Thus, $\mathbb{Q}^{\beta}(\theta)$ is admissible.

    
    Recalling that $\overline{V}^{\beta}(\cdot,\theta)\in C^{1}(\mathbb{R}_{+})\cap C^{2}(\mathbb{R}_{+}\setminus \{\alpha_{\beta}\})$. Applying Itô-Meyer's formula as in \cite{Meyer1976} to $(\overline{V}^{\beta}(X^{\xi}_{T\wedge \tau_{n}},\theta))_{T\geq 0}$, arguing as in the proof of Theorem 2 in \cite{DavisZervos1998pair}, and taking expectations under $\mathbb{Q}^{\beta}(\theta)$, we obtain (recalling that for any $\xi\in\mathcal{A}_{e}(x)$, it holds that $\xi_{t}=\xi^{c}_{t}+\sum_{s\leq t}\Delta\xi_{s}$, where $\xi^{c}$ is the continuous part of $\xi$):
    \begin{align}
        \label{expcted Ito formula arbitrary xi}
        \mathbb{E}^{\mathbb{Q}^{\beta}(\theta)}_{x}\big[ \overline{V}^{\beta}(X^{\xi}_{T\wedge \tau_{n}},\theta)\big]&=\overline{V}^{\beta}(x,\theta)+\mathbb{E}^{\mathbb{Q}^{\beta}(\theta)}_{x}\bigg[\int_{0}^{T\wedge \tau_{n}}\bigg(\frac{1}{2}\sigma^{2}(X^{\xi}_{s})\overline{V}^{\beta}_{xx}(X^{\xi}_{s},\theta) \notag \\
        &\quad +\big(b(X^{\xi}_{s})+\sigma(X^{\xi}_{s})\psi_{s}^{\beta}\big)\overline{V}^{\beta}_{x}(X^{\xi}_{s},\theta)\bigg)ds \bigg] -\mathbb{E}^{\mathbb{Q}^{\beta}(\theta)}_{x}\bigg[\int_{0}^{T\wedge \tau_{n}}\overline{V}^{\beta}_{x}(X^{\xi}_{s},\theta)d\xi_{s}^{c}\bigg]\notag \\
        &\quad-\mathbb{E}^{\mathbb{Q}^{\beta}(\theta)}_{x}\bigg[ \sum_{s\leq T\wedge \tau_{n}}\big( \overline{V}^{\beta}(X^{\xi}_{s+},\theta)-\overline{V}^{\beta}(X^{\xi}_{s},\theta) \big) \bigg]. 
        \end{align}
    Since
    \begin{equation}
        \label{eq: jumps for arb xi}
        \sum_{s\leq T\wedge \tau_{n}}\big( \overline{V}^{\beta}(X^{\xi}_{s+},\theta)-\overline{V}^{\beta}(X^{\xi}_{s},\theta)\big)=\boldsymbol{1}_{\{\Delta\xi_{s}>0\}}\int_{0}^{\Delta\xi_{s}}\overline{V}^{\beta}_{x}(X^{\xi}_{s}-r,\theta)dr,\quad\mathbb{Q}^{\beta}(\theta)\text{-a.s.},
    \end{equation}
    using (\ref{eq: jumps for arb xi}) and the observation that $\frac{1}{2\epsilon}(\psi^{\beta}_{t}(X_{t}^{x,\xi}))^{2}=\frac{\epsilon}{2}\sigma^{2}(X^{x,\xi}_{t})(\overline{V}^{\beta}_{x})^{2}(X^{x,\xi}_{t},\theta),\;\mathbb{Q}^{\beta}(\theta)\otimes dt$-a.s., (\ref{expcted Ito formula arbitrary xi}) gives
    \begin{align}
        \label{expcted Ito formula arbitrary xi II}
        \mathbb{E}^{\mathbb{Q}^{\beta}(\theta)}_{x}\big[ \overline{V}^{\beta}(X^{\xi}_{T\wedge \tau_{n}},\theta)\big]&=\overline{V}^{\beta}(x,\theta)+\mathbb{E}^{\mathbb{Q}^{\beta}(\theta)}_{x}\bigg[\int_{0}^{T\wedge \tau_{n}}\bigg(\mathcal{L}^{\epsilon}\overline{V}^{\beta}(X^{\xi}_{t},\theta)-\frac{\epsilon}{2}\sigma^{2}(X^{\xi}_{t})(\overline{V}^{\beta}_{x})^{2}(X^{\xi}_{t},\theta)\bigg)dt\bigg] \notag\\
        &\quad-\mathbb{E}^{\mathbb{Q}^{\beta}(\theta)}_{x}\bigg[\int_{0}^{T\wedge \tau_{n}}\overline{V}^{\beta}_{x}(X^{\xi}_{s},\theta)\circ d\xi_{s}\bigg].
    \end{align}
    We now aim to establish an upper bound for $\mathcal{L}^{\epsilon}\overline{V}^{\beta}$. Using the definition of $\overline{V}^{\beta}$, that $\overline{\phi}_{\beta}(x,\theta)=\phi_{\beta}(x,\theta),\; x\in (\alpha_{\beta},\infty)$, and the fact that solves $\phi_{\beta}$ solves (\ref{Auxiliary Second order ODE}) with $\gamma=0$ on $(\alpha_{\beta},\infty)$, we find that
    \begin{align}
        \label{eq: applied first order operator to truncated function mid domain}
        \mathcal{L}^{\epsilon}\overline{V}^{\beta}(x,\theta)&=\frac{1}{2}\sigma^{2}(X_{t}^{\xi})(\phi_{\beta})_{x}(x,\theta)+b(X_{t}^{\xi})\phi_{\beta}(x,\theta)\notag \\
        &\quad\quad-\frac{\epsilon}{2}\sigma^{2}(X_{t}^{\xi})(\phi_{\beta})^{2}(x,\theta)=\ell^{\epsilon}(\beta,\theta)-\pi(x,\theta),\quad x\in (\alpha_{\beta},\beta).
    \end{align}
    Moreover, we know that $\overline{\phi}_{\beta}(x,\theta)=c(x),\;x\in (0,\alpha_{\beta}]\cup [\beta,\infty)$ and recalling definition of $\ell^{\epsilon}(x,\theta)$ (cf. \eqref{function for eigenvalue}) we get that
    \begin{align}
        \label{eq: applied first order operator to truncated function outside domains}
        \mathcal{L}^{\epsilon}\overline{V}^{\beta}(x,\theta)&=\frac{1}{2}\sigma^{2}(x)(\phi_{\beta})_{x}(x,\theta)+b(x)\phi_{\beta}(x,\theta)\notag \\
        &\quad\quad-\frac{\epsilon}{2}\sigma^{2}(x)(\phi_{\beta})^{2}(x,\theta)=\ell^{\epsilon}(x,\theta)-\pi(x,\theta),\quad x\in (0,\alpha_{\beta})\cup [\beta,\infty).
    \end{align}
    Therefore, using Assumption \ref{Assumption for l}-(\ref{Condition of lambdas der 1}) and the fact that $\alpha_{\beta}<\widehat{x}_{\epsilon}(\theta)$ (cf. Lemma \ref{lemma: well posedness of truncated problem and limiting behavior}), one can show that
    \begin{equation}
        \label{eq: bounded on differential operator evaluated on V in truncated domains}
        \mathcal{L}^{\epsilon}\overline{V}^{\beta}(x,\theta)\leq\begin{cases}
            \ell^{\epsilon}(\widehat{x}_{\epsilon}(\theta),\theta)-\pi(x,\theta),\quad &x<\alpha_{\beta}, \\
            \ell^{\epsilon}(\beta,\theta)-\pi(x,\theta),\quad &x\geq \beta.
        \end{cases}
    \end{equation}
     Substituting \eqref{eq: applied first order operator to truncated function mid domain} and \eqref{eq: bounded on differential operator evaluated on V in truncated domains} into \eqref{expcted Ito formula arbitrary xi II}, rearranging terms, and using the lower bound $\overline{V}^{\beta}_{x}(x,\theta)\geq c(x)$, we obtain
     \begin{align}
        \label{expcted inequality arbitrary xi I}
        \mathbb{E}^{\mathbb{Q}^{\beta}(\theta)}_{x}&\bigg[\int_{0}^{T\wedge \tau_{n}}\bigg(\pi(X^{\xi}_{t},\theta)+\frac{1}{2\epsilon}(\psi^{\beta}(X_{t}^{\xi}))^{2}\bigg)dt\bigg] \notag \\
        &\leq \overline{V}^{\beta}(x,\theta)-\mathbb{E}^{\mathbb{Q}^{\beta}(\theta)}_{x}\big[ \overline{V}^{\beta}(X^{\xi}_{T\wedge \tau_{n}},\theta)\big]-\mathbb{E}^{\mathbb{Q}^{\beta}(\theta)}_{x}\bigg[\int_{0}^{T\wedge \tau_{n}}c(X^{\xi}_{s})\circ d\xi_{s}\bigg] \notag \\
        &\quad +\ell^{\epsilon}(\widehat{x}_{\epsilon}(\theta),\theta)\mathbb{E}^{\mathbb{Q}^{\beta}(\theta)}_{x}\bigg[\int_{0}^{T\wedge\tau_{n}}\boldsymbol{1}_{\{(0, \alpha_{\beta})\}}(X_{s}^{\xi})ds\bigg] \notag \\
        &\quad +\ell^{\epsilon}(\beta,\theta)\mathbb{E}^{\mathbb{Q}^{\beta}(\theta)}_{x}\bigg[\int_{0}^{T\wedge\tau_{n}}\boldsymbol{1}_{(\alpha_{\beta},\infty)}(X_{s}^{\xi})ds\bigg] \notag \\
        &\quad -\mathbb{E}^{\mathbb{Q}^{\beta}(\theta)}_{x}\big[ \overline{V}^{\beta}(X^{\xi}_{T\wedge\tau_{n}},\theta)\big] -\mathbb{E}^{\mathbb{Q}^{\beta}(\theta)}_{x}\bigg[\int_{0}^{T\wedge \tau_{n}}\big(\overline{V}^{\beta}_{x}(X^{\xi}_{s},\theta)-c(X^{\xi}_{t})\big)\circ d\xi_{s}\bigg].
    \end{align}
    Using again the fact that $\overline{V}^{\beta}_{x}(X^{\xi}_{t},\theta)\geq c(X^{\xi}_{t})\geq 0,\;\mathbb{Q}^{\beta}(\theta)\otimes dt$-a.s. and the structure of $\overline{V}^{\beta}$,
   \begin{align}
        \label{eq: bound before passing ergodic limits}
        \mathbb{E}^{\mathbb{Q}^{\beta}(\theta)}_{x}\bigg[\int_{0}^{T\wedge \tau_{n}}&\bigg(\pi(X^{\xi}_{t},\theta)+\frac{1}{2\epsilon}(\psi^{\beta}(X_{t}^{\xi}))^{2}\bigg)dt+\int_{0}^{T\wedge \tau_{n}}c(X^{\xi}_{s},\theta)\circ d\xi_{s}\bigg]\notag \\
        &\leq \overline{V}^{\beta}(x,\theta)-\mathbb{E}^{\mathbb{Q}^{\beta}(\theta)}_{x}\big[ \overline{V}^{\beta}(X^{\xi}_{T\wedge \tau_{n}},\theta)\big]+\ell^{\epsilon}(\widehat{x}_{\epsilon}(\theta),\theta)\mathbb{E}^{\mathbb{Q}^{\beta}(\theta)}_{x}\bigg[\int_{0}^{T\wedge\tau_{n}}\boldsymbol{1}_{\{(0, \alpha_{\beta})\}}(X_{s}^{\xi})ds\bigg]\notag \\
        &\quad+\ell^{\epsilon}(\beta,\theta)\mathbb{E}^{\mathbb{Q}^{\beta}(\theta)}_{x}\bigg[\int_{0}^{T\wedge\tau_{n}}\boldsymbol{1}_{(\alpha_{\beta},\infty)}(X_{s}^{\xi})ds\bigg] \notag \\
        &\leq \overline{V}^{\beta}(x,\theta)-\mathbb{E}^{\mathbb{Q}^{\beta}(\theta)}_{x}\big[ \overline{V}^{\beta}(X^{\xi}_{T\wedge \tau_{n}},\theta)\boldsymbol{1}_{(0,\alpha_{\beta})}(X_{T\wedge \tau_{n}}^{\xi})+\overline{V}^{\beta}(X^{\xi}_{T\wedge \tau_{n}},\theta)\boldsymbol{1}_{(\alpha_{\beta},\infty)}(X_{T\wedge \tau_{n}}^{\xi})\big] \notag \\
        &\quad+\ell^{\epsilon}(\widehat{x}_{\epsilon}(\theta),\theta)\mathbb{E}^{\mathbb{Q}^{\beta}(\theta)}_{x}\bigg[\int_{0}^{T\wedge\tau_{n}}\boldsymbol{1}_{\{(0, \alpha_{\beta})\}}(X_{s}^{\xi})ds\bigg]
        +\ell^{\epsilon}(\beta,\theta)\mathbb{E}^{\mathbb{Q}^{\beta}(\theta)}\big[T\wedge\tau_{n}\big] \notag \\
        &\leq\overline{V}^{\beta}(x,\theta)+\sup_{x\in (0,\alpha_{\beta}]}\big|\overline{V}^{\beta}(x,\theta)\big|-\mathbb{E}^{\mathbb{Q}^{\beta}(\theta)}_{x}\big[\overline{V}^{\beta}(X^{\xi}_{T\wedge \tau_{n}},\theta)\boldsymbol{1}_{(\alpha_{\beta},\infty)}(X_{T\wedge \tau_{n}}^{\xi})\big] \notag \\
        &\quad+\ell^{\epsilon}(\widehat{x}_{\epsilon}(\theta),\theta)\mathbb{E}^{\mathbb{Q}^{\beta}(\theta)}_{x}\bigg[\int_{0}^{T\wedge\tau_{n}}\boldsymbol{1}_{\{(0, \alpha_{\beta})\}}(X_{s}^{\xi})ds\bigg]+\ell^{\epsilon}(\beta,\theta)\mathbb{E}^{\mathbb{Q}^{\beta}(\theta)}\big[T\wedge\tau_{n}\big].
    \end{align}
    Recalling that $\phi_{\beta}$ solves (\ref{Auxiliary Second order ODE}) with $\gamma=0$, from Lemma \ref{lemma: lemma 5 from Cohen} we have that $\phi_{\beta}(x,\theta)\geq c(x)\geq 0$ for $x\in (\alpha_{\beta},\infty)$, which in turn, for $x\in (\alpha_{\beta},\infty)$, gives us
    \begin{equation}
        \label{eq: lower bound of truncated potential function}
        \overline{V}^{\beta}(x,\theta)=\overline{V}^{\beta}(\alpha_{\beta},\theta)+\int_{\alpha_{\beta}}^{x}\overline{\phi}_{\beta}(y,\theta)dy=\overline{V}^{\beta}(\alpha_{\beta},\theta)+\int_{\alpha_{\beta}}^{x}\phi_{\beta}(y,\theta)dy\geq \overline{V}^{\beta}(\alpha_{\beta},\theta).
    \end{equation}
    Using (\ref{eq: lower bound of truncated potential function}) in (\ref{eq: bound before passing ergodic limits}), we find that
    \begin{align}
        &\mathbb{E}^{\mathbb{Q}^{\beta}(\theta)}_{x}\bigg[\int_{0}^{T\wedge \tau_{n}}\bigg(\pi(X^{\xi}_{t},\theta)+\frac{1}{2\epsilon}(\psi^{\beta}(X_{t}^{\xi}))^{2}\bigg)dt+\int_{0}^{T\wedge \tau_{n}}c(X^{\xi}_{s},\theta)\circ d\xi_{s}\bigg]
        \leq \overline{V}^{\beta}(x,\theta)\notag \\
        &\quad+\sup_{x\in (0,\alpha_{\beta}]}\big|\overline{V}^{\beta}(x,\theta) \big|-\overline{V}^{\beta}(\alpha_{\beta},\theta)\mathbb{P}_{x}(X^{\xi}_{T\wedge\tau_{n}}>\alpha_{\beta}) +\ell^{\epsilon}(\widehat{x}_{\epsilon}(\theta),\theta)\mathbb{E}^{\mathbb{Q}^{\beta}(\theta)}_{x}\bigg[\int_{0}^{T\wedge\tau_{n}}\boldsymbol{1}_{\{(0, \alpha_{\beta}]\}}(X_{s}^{\xi})ds\bigg]\notag \\
        &\quad+\ell^{\epsilon}(\beta,\theta)\mathbb{E}^{\mathbb{Q}^{\beta}(\theta)}\big[T\wedge\tau_{n}\big].
    \end{align}
    Letting $n\uparrow\infty$ and invoking the monotone convergence theorem yields
    \begin{align}
        \label{eq: ineq of Ito before passing limit to T}
        &\mathbb{E}^{\mathbb{Q}^{*}(\theta)}_{x}\bigg[\int_{0}^{T}\bigg(\pi(X^{\xi}_{t},\theta)+\frac{1}{2\epsilon}(\psi^{\beta}(X_{t}^{\xi}))^{2}\bigg)dt+\int_{0}^{T}c(X^{\xi}_{s},\theta)\circ d\xi_{s}\bigg] \leq \overline{V}^{\beta}(x,\theta)+\sup_{x\in (0,\alpha_{\beta}]}\big|\overline{V}^{\beta}(x,\theta) \big|\notag \\
        &\quad\quad\quad-\overline{V}^{\beta}(\alpha_{\beta},\theta)\mathbb{P}_{x}(X^{\xi}_{T\wedge\tau_{n}}>\alpha_{\beta})+\ell^{\epsilon}(\beta,\theta)T+\ell^{\epsilon}(\widehat{x}_{\epsilon}(\theta),\theta)\mathbb{E}^{\mathbb{Q}^{\beta}(\theta)}_{x}\bigg[\int_{0}^{T}\boldsymbol{1}_{\{(0, \alpha_{\beta}]\}}(X_{s}^{\xi})ds\bigg].
    \end{align}
    Rearranging terms in (\ref{eq: ineq of Ito before passing limit to T}), dividing by $T$, letting $T\uparrow\infty$, and using the fact that $\overline{V}^{\beta}$ is bounded, we have
    \begin{align}
        \label{eq: equations with liminfs}
        \liminf_{T\uparrow\infty}\frac{1}{T}&\mathbb{E}^{\mathbb{Q}^{\beta}(\theta)}_{x}\bigg[\int_{0}^{T}\bigg(\pi(X^{\xi}_{t},\theta)+\frac{1}{2\epsilon}(\psi^{\beta}(X_{t}^{\xi}))^{2}\bigg)dt+\int_{0}^{T}c(X^{\xi}_{s},\theta)\circ d\xi_{s}\bigg] \notag \\
        &\leq \ell^{\epsilon}(\beta,\theta)+\ell^{\epsilon}(\widehat{x}_{\epsilon}(\theta),\theta)\limsup_{T\to\infty}\frac{1}{T}\mathbb{E}^{\mathbb{Q}^{\beta}(\theta)}_{x}\bigg[\int_{0}^{T}\boldsymbol{1}_{\{(0, \alpha_{\beta}]\}}(X_{s}^{\xi})ds\bigg],
    \end{align}
   where in (\ref{eq: equations with liminfs}) we have used the property $\liminf_{n}(v_{n}+r_{n})\leq \liminf_{n}v_{n}+\limsup_{n} r_{n}$.
    At this point, we aim to show that the controlled process $X^{\xi}$ under $\mathbb{Q}^{\beta}(\theta)$ admits a stationary distribution. To that end, since $\overline{\phi}^{\beta}$ is bounded, we define the scale function $S^{\mathbb{Q}^{\beta}(\theta)}_{x}(x):=\exp\big(\int_{x}^{\beta}\frac{2b(y)}{\sigma^{2}(y)}dy-2\epsilon\int_{x}^{\beta}\overline{\phi}^{\beta}(y,\theta)dy\big)$ and the speed measure $m_{x}^{\mathbb{Q}^{\beta}(\theta)}(x):=\frac{2}{\sigma^{2}(x)S^{\mathbb{Q}^{\beta}(\theta)}_{x}(x)}$, which satisfies Assumption \ref{Ass: Non explosion}. It remains to verify the integrability condition. We have:
    \begin{align}
        \int_{0}^{\infty}m^{\mathbb{Q}^{\beta}(\theta)}_{x}(x)dx&=\int_{0}^{\infty}\frac{2}{\sigma^{2}(x)S^{\mathbb{Q}^{\beta}(\theta)}_{x}(x)}dx=\int_{0}^{\infty}\frac{2}{\sigma^{2}(x)S^{\mathbb{P}}_{x}(x)}\exp\bigg(2\epsilon\int_{x}^{\beta}\overline{\phi}_{\beta}(y,\theta)dy \bigg)dx \notag \\
        &=\int_{0}^{\infty}\frac{2}{\sigma^{2}(x)S^{\mathbb{P}}_{x}(x)}\exp\bigg(2\epsilon\bigg(\int_{x}^{\beta}\overline{\phi}_{\beta}(y,\theta)dy\bigg)\boldsymbol{1}_{(0,\beta)}(x)\notag \\
        &\quad -2\epsilon\bigg(\int_{\beta}^{x}c(y)dy\bigg)\boldsymbol{1}_{[\beta,\infty)}(x)\bigg) dx \notag \\
        &\leq \int_{0}^{\infty}\frac{2}{\sigma^{2}(x)S^{\mathbb{P}}_{x}(x)}\exp\bigg(2\epsilon\bigg(\int_{x}^{\beta}\overline{\phi}_{\beta}(y,\theta)dy\bigg)\boldsymbol{1}_{(0,\beta)}(x)\bigg)dx \notag \\
        &\leq e^{M(\beta,\theta)}\int_{0}^{\infty}\frac{2}{\sigma^{2}(x)S^{\mathbb{P}}_{x}(x)}dx<\infty,
    \end{align}
    where in the first inequality we have used the fact that $c(x)$ is positive, and in the second we applied Proposition \ref{prop: uniform boundness of phi} for $\overline{\phi}_{\beta}$. Then, by ergodicity (see p. 37 in \cite{BorodinSalminen2012}), we find
    \begin{align}
        \label{eq: ergodicity of indicator function}
        \limsup_{T\to\infty}\frac{1}{T}\mathbb{E}^{\mathbb{Q}^{\beta}(\theta)}_{x}\bigg[\int_{0}^{T}\boldsymbol{1}_{\{(0, \alpha_{\beta}]\}}(X_{s}^{\xi})ds\bigg]&=\int_{\mathbb{R}_{+}}\boldsymbol{1}_{\{(0, \alpha_{\beta}]\}}(x)m_{x}^{\mathbb{Q}^{\beta}(\theta)}(x)dx \notag \\
        &=\int_{0}^{\alpha_{\beta}}m_{x}^{\mathbb{Q}^{\beta}(\theta)}(x)dx\leq e^{M(\beta,\theta)}m^{\mathbb{P}}((0,\alpha_\beta)).
    \end{align}
    Hence, using (\ref{eq: ergodicity of indicator function}) in (\ref{eq: equations with liminfs}) and recalling (\ref{Ergodic criterion}), we obtain
    \begin{align}
        \label{eq: final liminf bound}
        \inf_{\mathbb{Q}\in \widehat{\mathcal{Q}}(x)}J^{\epsilon}(x;\xi,\mathbb{Q},\theta)&\leq\liminf_{T\uparrow\infty}\frac{1}{T}\mathbb{E}^{\mathbb{Q}^{\beta}(\theta)}_{x}\bigg[\int_{0}^{T}\bigg(\pi(X^{\xi}_{t},\theta)+\frac{1}{2\epsilon}(\psi^{\beta}(X_{t}^{\xi})(X_{t}^{\xi}))^{2}\bigg)dt+\int_{0}^{T}c(X^{\xi}_{s},\theta)\circ d\xi_{s}\bigg] \notag \\
        &\leq \ell^{\epsilon}(\beta,\theta)+\ell^{\epsilon}(\widehat{x}_{\epsilon}(\theta),\theta)e^{M(\beta,\theta)}m^{\mathbb{P}}((0,\alpha_\beta)).
    \end{align}
    Then, letting $\beta\to\beta_{\epsilon}(\theta)^{-}$, we note that $\alpha_{\beta}\to 0$ and $M(\beta_{\epsilon}(\theta),\theta)$ is bounded (cf. Proposition \ref{prop: uniform boundness of phi}). From the continuity of the mapping $\beta\mapsto \ell^{\epsilon}(\beta,\theta)$, we get
    \begin{equation}
        \inf_{\mathbb{Q}\in \widehat{\mathcal{Q}}(x)}J^{\epsilon}(x;\xi,\mathbb{Q},\theta)\leq\ell^{\epsilon}(\beta_{\epsilon}(\theta),\theta)
    \end{equation}
    for an arbitrary $\xi\in \mathcal{A}_{e}(x)$, which in turn yields
    \begin{equation}
        \sup_{\xi\in \mathcal{A}_{e}(x)}\inf_{\mathbb{Q}\in \widehat{\mathcal{Q}}(x)}J^{\epsilon}(x;\xi,\mathbb{Q},\theta)\leq\lambda^{\epsilon}(\theta).
    \end{equation}
    Finally, combining this with Step 1, we conclude that 
    \begin{equation}
        \label{eq: saddle-point}
        \lambda^{\epsilon}(\theta)=\sup_{\xi\in\mathcal{A}_{e}(x)}\inf_{\mathbb{Q}\in\widehat{\mathcal{Q}}(x)}J^{\epsilon}(x,\xi,\mathbb{Q};\theta).
    \end{equation}  
\end{proof}

\begin{remark}
    \label{remark: saddle-point} As a byproduct of the verification theorem, we have obtained in (\ref{eq: saddle-point}) that the zero-sum game between the decision maker choosing $\xi$ and Nature choosing $\mathbb{Q}$ has a value.
\end{remark}

\section{Mean-Field Equilibrium}
\label{section: MFE part}
In the following, we prove existence and uniqueness of the mean-field equilibrium (cf. Definition
\ref{Def Mean-Field equilibrium}) by an application of Schauder-Tychonof fixed-point theorem. Let $\mathcal{P}(\mathbb{R}_{+},\mathcal{B}(\mathbb{R}_{+}))$ be the space of
probability measures on $\mathbb{R}_{+}$ with the Borel $\sigma$-field, endowed with the weak topology.
\subsection{Continuity and boundedness of the free-boundary with respect to \texorpdfstring{$\theta$}{theta}}
In this subsection, we establish continuity and bounds of the map $\theta \mapsto \beta_{\epsilon}(\theta),\; \theta\in\mathbb{R}_{+}$. For our subsequent analysis, we introduce the following assumptions.

\begin{assumptions}
    \label{Ass: Mean-Field Assumptions}
    \begin{enumerate}
        \item \label{Ass: inada cond of profit fun}   
        \begin{equation}
            \lim_{\theta\downarrow 0}\pi_{x}(x,\theta)=\infty\;\text{ and }\;\lim_{\theta\uparrow \infty}\pi_{x}(x,\theta)=0.
        \end{equation}
        \item \label{Larsy-Lions cond} $\pi_{x\theta}$ is continuous and it is such that $\pi_{x\theta}(x,\theta)<0$, for any $(x,\theta)\in\mathbb{R}_{+}^{2}$.
        \item \label{ass: Lipschtiz property of pi wrt theta} There exist $\delta\in (0,1)$ and $C>0$ such that,
        \begin{equation}
            \big| \pi(x,\theta_{2})-\pi(x,\theta_{1}) \big|\leq C(1+|x|^{\delta})|\theta_{2}-\theta_{1}|,
        \end{equation}
        for any $\theta_{1},\theta_{2}\in\mathbb{R}_{+}$ and $x\in\mathbb{R}_{+}$.
        \item The function $\underline{\ell}^{\epsilon}:\mathbb{R}_{+}\to\mathbb{R}$, with $\underline{\ell}^{\epsilon}(x):=b(x)c(x)-\frac{1}{2}\sigma^{2}(x)(\epsilon c^{2}(x)-c_{x}(x))$, satisfies the following:
        \begin{enumerate}
            \item \label{item: assumption on lambda bar}For any $\epsilon\geq 0$, there exist $\widehat{y}_{\epsilon}\in\mathbb{R}_{+}$ such that,
                \begin{equation}
                    \label{Ass: Cond on der of lambda bar}
                    (\underline{\ell}^{\epsilon})_{x}(x)\begin{cases}
                    >0,\quad x<\widehat{y}_{\epsilon} \\
                    =0,\quad x=\widehat{y}_{\epsilon}\\
                    <0,\quad x>\widehat{y}_{\epsilon}.
                    \end{cases}
                \end{equation}
            \item One has that
            \begin{equation}
                \label{Ass: limit assumption of lambda bar}
                \lim_{x\uparrow\infty}\underline{\ell}^{\epsilon}(x)=-\infty\quad\text{and}\quad\underline{\ell}^{\epsilon}(0):=\lim_{x\downarrow 0}\underline{\ell}^{\epsilon}(x)\quad\text{is finite}.
            \end{equation}
            \item \label{Ass: Zero for lambda bar} One has that $\underline{\widehat{y}}_{\epsilon}:=\inf\{x\geq \widehat{y}_{\epsilon}(\theta):\underline{\ell}^{\epsilon}(x)=\underline{\ell}^{\epsilon}(0)\}$ is finite.
        \end{enumerate}
    \end{enumerate}
\end{assumptions}
\begin{remark}
    Assumptions \ref{Ass: Mean-Field Assumptions}-(\ref{Ass: inada cond of profit fun}), \ref{Ass: Mean-Field Assumptions}-(\ref{Larsy-Lions cond}), and \ref{Ass: Mean-Field Assumptions}-(\ref{ass: Lipschtiz property of pi wrt theta}) are required to establish the existence of a mean-field equilibrium. In particular, Assumption \ref{Ass: Mean-Field Assumptions}-(\ref{Larsy-Lions cond}) is crucial for showing the monotonicity of the free boundary with respect to the mean-field parameter. This monotonicity plays a pivotal role in determining the structure of the set of admissible mean-field parameters and implies the uniqueness of the mean-field equilibrium. From an economic perspective, Assumption \ref{Ass: Mean-Field Assumptions}-(\ref{Ass: inada cond of profit fun}) imposes Inada conditions on our problem. Furthermore, Assumption \ref{Ass: Mean-Field Assumptions}-(\ref{Larsy-Lions cond}) indicates that the marginal net profit, denoted by $\pi_{x}$, decreases as the aggregate index $\theta$ increases. From a technical standpoint, this assumption can be viewed as a stronger version of the Lasry-Lions monotonicity (see Remark 3 in \cite{CaoFerrariDianetti}). Finally, Assumption \ref{Ass: Mean-Field Assumptions}-(\ref{item: assumption on lambda bar}) is essential for introducing an auxiliary problem (independent of $\theta$), which is instrumental for the existence of the mean-field equilibrium. From an applied perspective, this auxiliary problem corresponds to a scenario where the agent faces no competition, a case also analyzed in Section \ref{Section: Case study}.
\end{remark}

Our first result is related to the monotonicity of the map $\theta\mapsto \beta_{\epsilon}(\theta),\; \theta\in\mathbb{R}_{+}$.

\begin{lemma}
    \label{lemma: monotonicity of fb wrt theta}
    The map $\theta\mapsto\beta_{\epsilon}(\theta),\theta\in\mathbb{R}_{+}$, is nonincreasing.
\end{lemma}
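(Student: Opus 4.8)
The plan is to establish the stronger inclusion $B_{\epsilon}(\theta_{1})\subseteq B_{\epsilon}(\theta_{2})$ whenever $0<\theta_{1}<\theta_{2}$; since each $B_{\epsilon}(\theta)$ is nonempty by Proposition \ref{properties of b(a)}-(\ref{underline x lies in B}) and $A\subseteq B$ forces $\inf B\le\inf A$, this yields $\beta_{\epsilon}(\theta_{2})\le\beta_{\epsilon}(\theta_{1})$, which is the assertion. So fix $0<\theta_{1}<\theta_{2}$ and an arbitrary $\beta\in B_{\epsilon}(\theta_{1})$. By Proposition \ref{E&U of first order ODE} applied with $\gamma=0$, for each $i\in\{1,2\}$ the function $\phi_{\beta}(\cdot,\theta_{i})\in C^{1}(\mathbb{R}_{+})$ solves, on $(0,\beta)$, the scalar first-order ODE $(\phi_{\beta})_{x}(x,\theta_{i})=G(x,\phi_{\beta}(x,\theta_{i});\theta_{i})$, where
\[
G(x,\phi;\theta):=\frac{2}{\sigma^{2}(x)}\Big(\ell^{\epsilon}(\beta,\theta)-\pi(x,\theta)-b(x)\phi+\frac{\epsilon}{2}\sigma^{2}(x)\phi^{2}\Big),
\]
together with the terminal condition $\phi_{\beta}(\beta,\theta_{i})=-c(\beta)$, which is the same for $i=1$ and $i=2$.

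The Lasry--Lions condition in Assumption \ref{Assumptions for profit fun}-(\ref{Larsy-Lions cond}) is the driving input. From the definition of $\ell^{\epsilon}$ in (\ref{function for eigenvalue}), the $\theta$-dependence of the right-hand side of the ODE enters only through $\ell^{\epsilon}(\beta,\theta)-\pi(x,\theta)$, which is a $\theta$-independent expression plus $\pi(\beta,\theta)-\pi(x,\theta)$. Hence, for $x<\beta$,
\[
\partial_{\theta}\big(\ell^{\epsilon}(\beta,\theta)-\pi(x,\theta)\big)=\pi_{\theta}(\beta,\theta)-\pi_{\theta}(x,\theta)=\int_{x}^{\beta}\pi_{x\theta}(y,\theta)\,dy<0,
\]
so that $G(x,\phi;\theta_{2})\le G(x,\phi;\theta_{1})$ for every $x\in(0,\beta)$ and every $\phi\in\mathbb{R}$.

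I would then invoke the comparison principle for scalar ODEs, integrating \emph{from the endpoint $x=\beta$ toward $x=0$} since that is where the two solutions are pinned together. Concretely, with $u:=\beta-x$ and $\widetilde{\phi}_{i}(u):=\phi_{\beta}(\beta-u,\theta_{i})$ one has $\widetilde{\phi}_{i}'(u)=-G(\beta-u,\widetilde{\phi}_{i}(u);\theta_{i})$ on $u\in[0,\beta)$, with $\widetilde{\phi}_{1}(0)=\widetilde{\phi}_{2}(0)=-c(\beta)$ and $-G(\,\cdot\,;\theta_{1})\le-G(\,\cdot\,;\theta_{2})$ by the previous display. Since $\phi\mapsto G(x,\phi;\theta)$ is Lipschitz on compact $x$-intervals (it is quadratic in $\phi$ with continuous coefficients and $\sigma$ bounded away from $0$ on compacts), the comparison principle gives $\widetilde{\phi}_{1}\le\widetilde{\phi}_{2}$, i.e.\ $\phi_{\beta}(x,\theta_{1})\le\phi_{\beta}(x,\theta_{2})$ for all $x\in(0,\beta]$. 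As $\beta\in B_{\epsilon}(\theta_{1})$ means $\phi_{\beta}(x,\theta_{1})\ge-c(x)$ on $(0,\beta]$, we get $\phi_{\beta}(x,\theta_{2})\ge-c(x)$ on $(0,\beta]$, i.e.\ $\beta\in B_{\epsilon}(\theta_{2})$; this proves the inclusion and hence the lemma.

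I expect the only real delicate point to be the orientation of the comparison: it must be run as an initial value problem started at $x=\beta$ and propagated to smaller $x$, otherwise the sign of the source perturbation pushes the solutions the wrong way. The regularity needed to apply the comparison principle is immediate from the $C^{1}$-solvability in Proposition \ref{E&U of first order ODE} and the explicit quadratic form of $G$, and no hypothesis beyond Assumption \ref{Assumptions for profit fun}-(\ref{Larsy-Lions cond}) is used; nonemptiness of the sets $B_{\epsilon}(\theta)$, needed so that the infima above make sense, is guaranteed by Proposition \ref{properties of b(a)}.
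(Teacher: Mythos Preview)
Your argument is correct. Both your proof and the paper's reduce the lemma to the comparison $\phi_{\beta}(\cdot,\theta_{1})\le\phi_{\beta}(\cdot,\theta_{2})$ for $\theta_{1}<\theta_{2}$, and then conclude via $B_{\epsilon}(\theta_{1})\subseteq B_{\epsilon}(\theta_{2})$ (you for general $\beta$, the paper only for $\beta=\beta_{\epsilon}(\theta_{1})$, which suffices). The difference is in how that comparison is obtained. You rewrite the ODE in the backward variable $u=\beta-x$, observe that the Lasry--Lions condition makes $-G(\,\cdot\,;\theta_{1})\le-G(\,\cdot\,;\theta_{2})$, and apply the standard scalar ODE comparison principle (local Lipschitz in $\phi$ gives uniqueness, hence the non-strict inequality propagates). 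The paper instead packages this as a separate appendix result (Proposition~\ref{Proposition: Comparison Principle wrt theta}-(\ref{item: comparison wrt theta})) proved by the same $\gamma$-perturbation device used throughout: one introduces $\psi^{\gamma}:=\phi_{\beta}^{\gamma}(\cdot,\theta_{2})-\phi_{\beta}(\cdot,\theta_{1})$ with $\gamma<0$, checks $\psi^{\gamma}_{x}(\beta)<0$, rules out an interior zero of $\psi^{\gamma}$ via Lemma~\ref{Elementary lemma}, and then sends $\gamma\uparrow0$ using Lemma~\ref{Pertrubed solutions lemma}. Your route is shorter and self-contained, avoiding both the perturbation lemma and the elementary sign lemma; the paper's route has the virtue of methodological uniformity with the other structural results (Propositions~\ref{properties of b(a)} and \ref{prop: Lip cont of phi wrt theta}, Lemma~\ref{lemma: Robust lower bound of free boundary}), all of which are handled by the same perturb-and-contradict template.
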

\begin{proof}
    Let $\theta_{1},\theta_{2}\in\mathbb{R}_{+}$ with $\theta_{1}<\theta_{2}$. From Proposition \ref{Candidate upper free-boundaries} we know that $\beta_{\epsilon}(\theta_{1})$ is well-defined and we introduce $\phi_{\beta_{\epsilon}(\theta_{1})}(\cdot,\theta_{1})$ and $\phi_{\beta_{\epsilon}(\theta_{1})}(\cdot,\theta_{2})$, which are the unique classical solutions to (\ref{Auxiliary Second order ODE}) for $\gamma=0$, $\beta=\beta_{\epsilon}(\theta_{1})$ and $\theta=\theta_{1}$ and $\theta=\theta_{2}$, respectively. Hence, from Proposition \ref{Proposition: Comparison Principle wrt theta}-(\ref{item: comparison wrt theta}) we obtain that $\phi_{\beta_{\epsilon}(\theta_{1})}(x,\theta_{2})\geq\phi_{\beta_{\epsilon}(\theta_{1})}(x,\theta_{1})\geq c(x)$ for any $x\in (0,\beta_{\epsilon}(\theta_{1})]$, which implies that $\beta_{\epsilon}(\theta_{1})\in B_{\epsilon}(\theta_{2})$ (cf. (\ref{Candidate upper free-boundaries})) and $\beta_{\epsilon}(\theta_{2})=\inf B_{\epsilon}(\theta_{2})\leq \beta_{\epsilon}(\theta_{1})$.
\end{proof}
\begin{lemma}
    \label{lemma: Robust lower bound of free boundary}
    There exists $\underline{\beta}_{\epsilon}\in\mathbb{R}_{+}$, such that $\beta_{\epsilon}(\theta)\geq \underline{\beta}_{\epsilon}$ for any $\theta\in\mathbb{R}_{+}$.
\end{lemma}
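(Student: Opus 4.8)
The plan is to bound $\beta_{\epsilon}(\theta)$ below by the maximiser of the limiting running-profit functional obtained as $\theta\uparrow\infty$, and to do so uniformly in $\theta$ by a direct comparison of the functions $\ell^{\epsilon}(\cdot,\theta)$ and $\underline{\ell}^{\epsilon}$. First I would recall from Proposition \ref{properties of b(a)}-(\ref{infimum of b}) that $\beta_{\epsilon}(\theta)\geq\widehat{x}_{\epsilon}(\theta)$ for every $\theta\in\mathbb{R}_{+}$, where $\widehat{x}_{\epsilon}(\theta)$ is the unique maximiser of $\ell^{\epsilon}(\cdot,\theta)$ characterised by the trichotomy (\ref{Condition of lambdas der 1}). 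Hence it suffices to find $\underline{\beta}_{\epsilon}>0$ with $\widehat{x}_{\epsilon}(\theta)\geq\underline{\beta}_{\epsilon}$ for all $\theta$.

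To this end, note that by the definitions of $\ell^{\epsilon}$ (cf.\ (\ref{function for eigenvalue})) and of $\underline{\ell}^{\epsilon}$ (cf.\ Assumptions \ref{Ass: Mean-Field Assumptions}) one has $\ell^{\epsilon}(x,\theta)-\underline{\ell}^{\epsilon}(x)=\pi(x,\theta)-\kappa(x)$, so differentiating in $x$ and using $\pi_{x}(x,\theta)\geq\kappa_{x}(x)$ from Assumptions \ref{Ass: Mean-Field Assumptions}-(\ref{Ass: inada cond of profit fun}) gives $(\ell^{\epsilon})_{x}(x,\theta)\geq(\underline{\ell}^{\epsilon})_{x}(x)$ for all $(x,\theta)\in\mathbb{R}_{+}^{2}$. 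Now fix $x<\widehat{y}_{\epsilon}$: by (\ref{Ass: Cond on der of lambda bar}) we have $(\underline{\ell}^{\epsilon})_{x}(x)>0$, hence $(\ell^{\epsilon})_{x}(x,\theta)>0$, and the strict-sign trichotomy (\ref{Condition of lambdas der 1}) forces $x<\widehat{x}_{\epsilon}(\theta)$. Letting $x\uparrow\widehat{y}_{\epsilon}$ yields $\widehat{x}_{\epsilon}(\theta)\geq\widehat{y}_{\epsilon}$ for every $\theta\in\mathbb{R}_{+}$.

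Combining the two steps, $\beta_{\epsilon}(\theta)\geq\widehat{x}_{\epsilon}(\theta)\geq\widehat{y}_{\epsilon}$ for all $\theta$, so the statement holds with $\underline{\beta}_{\epsilon}:=\widehat{y}_{\epsilon}$, which is strictly positive since $\widehat{y}_{\epsilon}\in\mathbb{R}_{+}=(0,\infty)$. I do not anticipate a genuine obstacle here: the only point requiring care is applying the sign characterisations of $(\ell^{\epsilon})_{x}$ and $(\underline{\ell}^{\epsilon})_{x}$ in the correct direction, namely deducing from positivity of the derivative at a point that the point lies to the left of the maximiser. As an alternative route, one could instead invoke the monotonicity of $\theta\mapsto\beta_{\epsilon}(\theta)$ established in Lemma \ref{lemma: monotonicity of fb wrt theta} and pass to the limit $\theta\uparrow\infty$ in the auxiliary boundary-value problem (\ref{Auxiliary Second order ODE}) with $\pi$ replaced by its limit $\kappa$, identifying $\lim_{\theta\uparrow\infty}\beta_{\epsilon}(\theta)$ with the barrier attached to $\underline{\ell}^{\epsilon}$; but this would require an additional stability argument that the comparison above sidesteps.
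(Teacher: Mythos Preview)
Your argument is correct and considerably more direct than the paper's. The paper proceeds at the level of the ODE solutions: it introduces the auxiliary function $\underline{\phi}^{\gamma}$ solving the boundary-value problem (\ref{Auxiliary Second order ODE}) with $\pi$ replaced by $\kappa$ and boundary at $\beta_{\epsilon}(\theta)$, shows via a perturbation-and-contradiction argument (in the style of Proposition \ref{properties of b(a)}) that $\phi_{\beta_{\epsilon}(\theta)}(\cdot,\theta)\leq\underline{\phi}^{\gamma}$ on $(0,\beta_{\epsilon}(\theta)]$, and then extracts a $\theta$-independent barrier $\underline{\beta}_{\epsilon}$ from the set where $\underline{\phi}^{\gamma}\geq -c$. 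Your route bypasses the ODE comparison entirely: you work one level higher, comparing the derivatives $(\ell^{\epsilon})_{x}(\cdot,\theta)$ and $(\underline{\ell}^{\epsilon})_{x}$ directly via the pointwise inequality $\pi_{x}\geq\kappa_{x}$, which immediately forces $\widehat{x}_{\epsilon}(\theta)\geq\widehat{y}_{\epsilon}$ and hence $\beta_{\epsilon}(\theta)\geq\widehat{y}_{\epsilon}$ through Proposition \ref{properties of b(a)}-(\ref{infimum of b}). What the paper's approach buys is an explicit comparison function $\underline{\phi}$ that is reused later (see the construction of $\underline{V}^{\epsilon}$ in Step 1 of Theorem \ref{theorem: existence and uniqueness of MFE}); what your approach buys is a clean two-line proof of the lemma as stated, with the concrete constant $\underline{\beta}_{\epsilon}=\widehat{y}_{\epsilon}$, and no need for the perturbation parameter $\gamma$ or the contradiction via Lemma \ref{Elementary lemma}. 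Both are valid; yours is the more economical route to the bare statement.
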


\begin{proof}
    Let $\gamma>0$, $\theta\in\mathbb{R}_{+}$, and define the function $\psi^{\gamma}(x,\theta):=\phi_{\beta_{\epsilon}(\theta)}(x,\theta)-\underline{\phi}^{\gamma}(x)$, where $\phi_{\beta_{\epsilon}(\theta)}(\cdot,\theta)$ satisfies (\ref{Auxiliary Second order ODE}) for $\beta=\beta_{\epsilon}(\theta)$ and $\gamma=0$, and $\underline{\phi}^{\gamma}$ satisfies
    \begin{equation}
        \label{eq: lower function wrt theta}
    \begin{cases}
        \frac{1}{2}\sigma^{2}(x)\underline{\phi}^{\gamma}_{x}(x)+b(x)\underline{\phi}^{\gamma}(x)-\frac{\epsilon}{2}\sigma^{2}(x)(\underline{\phi}^{\gamma})^{2}(x)=\underline{\ell}^{\epsilon}(\beta_{\epsilon}(\theta))-\gamma, \quad x\in (0,\beta_{\epsilon}(\theta))\\
        \underline{\phi}^{\gamma}(x)=c(x),\quad x\in [\beta_{\epsilon}(\theta),\infty).
    \end{cases}
    \end{equation}
    Based on the proof of Proposition \ref{E&U of first order ODE}, we can show that the function $\underline{\phi}^{\gamma}$ uniquely solves (\ref{eq: lower function wrt theta}) and it is such that $\underline{\phi}^{\gamma}\in C^{1}(\mathbb{R}_{+})$. Hence, it follows that $\psi^{\gamma}(\cdot,\theta)$ is the unique continuously differentiable solution to
    \begin{equation}
        \label{eq: difference in robust bound result}
    \begin{cases}
        \frac{1}{2}\sigma^{2}(x)(\psi^{\gamma})_{x}(x,\theta)+b(x)\psi^{\gamma}(x,\theta)-\frac{\epsilon}{2}\sigma^{2}(x)(\psi^{\gamma})^{2}(x,\theta)-\epsilon\sigma^{2}\underline{\phi}(x)\psi^{\gamma}(x,\theta)\\
        \quad \quad \quad \quad \quad\;\;\;=\big(\pi(\beta_{\epsilon}(\theta),\theta)-\pi(x,\theta)\big)+\gamma, \quad x\in (0,\beta_{\epsilon}(\theta))\\
        \psi^{\gamma}(x,\theta)=0,\quad x\in [\beta_{\epsilon}(\theta),\infty).
    \end{cases}
    \end{equation}
    Notice that $\pi_{x}(x,\theta)\geq 0$ for any $\theta\in\mathbb{R}_{+}$ (cf. Assumptions \ref{Assumptions for profit fun}-(\ref{Larsy-Lions cond}) and Assumption \ref{Ass: Mean-Field Assumptions}-(\ref{Ass: inada cond of profit fun})), and that, thanks to Assumption \ref{Ass: Mean-Field Assumptions}-(\ref{Ass: inada cond of profit fun}),
    \begin{equation*}
        \pi(\beta_{\epsilon}(\theta),\theta)-\pi(x,\theta)=\int_{x}^{\beta_{\epsilon}(\theta)}\pi_{x}(y,\theta)dy\geq 0,\quad x\leq \beta_{\epsilon}(\theta).
    \end{equation*}
    Then, plugging $x=\beta_{\epsilon}(\theta)$ in (\ref{eq: difference in robust bound result}), we obtain $\psi_{x}^{\gamma}(\beta_{\epsilon}(\theta),\theta)>0$. We want to show that $\psi^{\gamma}(x,\theta)\leq 0$ for any $x\in (0,\beta_{\epsilon}(\theta)]$. Arguing by contradiction, we assume that there exists $z_{0}(\theta):=\sup\{x\in (0,\beta_{\epsilon}(\theta)): \psi^{\gamma}(x,\theta)=0\}$ (which is well-defined due to the continuity of $\psi^{\gamma}(\cdot,\theta)$; cf. Proposition \ref{E&U of first order ODE}, we plug $x=z_{0}(\theta)$ in (\ref{eq: difference in robust bound result})), and we obtain
    \begin{equation}
        \frac{1}{2}\sigma^{2}(z_{0}(\theta))(\psi^{\gamma})_{x}(z_{0}(\theta),\theta)=\int_{z_{0}(\theta)}^{\beta_{\epsilon}(\theta)}\pi_{x}(y,\theta)dy+\gamma>0,
    \end{equation}
    where the last inequality follows from Assumptions \ref{Assumptions for profit fun}-(\ref{Concativity}). Hence, we reach to a contradiction with Lemma \ref{Elementary lemma}. Consequently, we have that $c(x)\leq \phi_{\beta_{\epsilon}(\theta)}(x,\theta)\leq \underline{\phi}^{\gamma}(x)$ for any $x\in (0,\beta_{\epsilon}(\theta)]$. Finally, thanks to (\ref{Ass: Mean-Field Assumptions}), (\ref{Ass: Cond on der of lambda bar}), and (\ref{Ass: limit assumption of lambda bar}), we can mimic the steps of the proof of Proposition \ref{properties of b(a)} to show the existence of $\underline{\beta}_{\epsilon}:=\sup\{x\in (0,\beta_{\epsilon}(\theta)]:\underline{\phi}^{\gamma}(x)\geq c(x)\}>0$, with $\underline{\beta}_{\epsilon}\leq \beta_{\epsilon}(\theta)$. Then we conclude as in Lemma \ref{lemma: monotonicity of fb wrt theta}.
\end{proof}

\begin{lemma}
    \label{lemma: Lip cont of Lambda wrt theta}
    For any $\theta_{1},\theta_{2}\in\mathbb{R}_{+}$, there exists $C_{0}:=C_{0}(\theta_{2})>0$ such that
    \begin{equation}
        \label{eq: Lip cont of Lambda wrt theta}
        \big|\lambda^{\epsilon}(\theta_{2})-\lambda^{\epsilon}(\theta_{1})\big|\leq C_{0}|\theta_{2}-\theta_{1}|.
    \end{equation}
\end{lemma}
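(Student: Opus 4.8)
The plan is to obtain (\ref{eq: Lip cont of Lambda wrt theta}) from the game‑value characterisation of $\lambda^{\epsilon}$ established in Theorem \ref{theorem: Verification result} (see also Remark \ref{remark: saddle-point}), exploiting that under the optimal barrier policy the controlled state stays in a $\theta$‑independent compact interval, so that replacing $\theta_{1}$ by $\theta_{2}$ perturbs the running profit $\pi$ only by a quantity which is Lipschitz in $\theta$ and uniform in the adversarial measure. Without loss of generality I take $\theta_{1}<\theta_{2}$; by Lemma \ref{lemma: monotonicity of fb wrt theta} this already gives $\beta_{\epsilon}(\theta_{2})\le\beta_{\epsilon}(\theta_{1})$. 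Two facts coming out of the proof of Theorem \ref{theorem: Verification result} will be used: for each $i\in\{1,2\}$ the reflection policy at level $\beta_{\epsilon}(\theta_{i})$ produces, against every $\mathbb{Q}\in\widehat{\mathcal{Q}}(x)$, a control $\xi^{*}(\theta_{i})\in\mathcal{A}_{e}(x)$ whose state satisfies $X^{\xi^{*}(\theta_{i})}_{t}\in(0,\beta_{\epsilon}(\theta_{i})]$, $\mathbb{Q}\otimes dt$‑a.s., and moreover $J^{\epsilon}(x;\xi^{*}(\theta_{i}),\mathbb{Q},\theta_{i})\ge\lambda^{\epsilon}(\theta_{i})$ for every such $\mathbb{Q}$.

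The first step is an elementary perturbation bound: if $\xi\in\mathcal{A}_{e}(x)$, $\mathbb{Q}\in\widehat{\mathcal{Q}}(x)$ and $X^{\xi}_{t}\le\beta$ for $\mathbb{Q}\otimes dt$‑a.e.\ $(t,\omega)$ and some constant $\beta>0$, then, since the functionals in (\ref{second version of functional}) at $\theta_{2}$ and at $\theta_{1}$ differ only through the term $\pi(X^{\xi}_{t},\cdot)$, Assumptions \ref{Ass: Mean-Field Assumptions}-(\ref{ass: Lipschtiz property of pi wrt theta}) and the bound $X^{\xi}_{t}\le\beta$ give, for every finite $T$,
\[
\Big|\tfrac1T\mathbb{E}^{\mathbb{Q}}_{x}\Big[\int_{0}^{T}\big(\pi(X^{\xi}_{t},\theta_{2})-\pi(X^{\xi}_{t},\theta_{1})\big)\,dt\Big]\Big|\le C\big(1+\beta^{\delta}\big)\,|\theta_{2}-\theta_{1}|,
\]
and, the bound being uniform in $T$, it passes to the $\liminf_{T\to\infty}$ to yield $\big|J^{\epsilon}(x;\xi,\mathbb{Q},\theta_{2})-J^{\epsilon}(x;\xi,\mathbb{Q},\theta_{1})\big|\le C(1+\beta^{\delta})|\theta_{2}-\theta_{1}|$, \emph{uniformly} in $\mathbb{Q}$. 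Applying this with $\xi=\xi^{*}(\theta_{1})$ and $\beta=\beta_{\epsilon}(\theta_{1})$, then taking $\inf_{\mathbb{Q}}$ and using the maximin definition (\ref{value function}) of $\lambda^{\epsilon}(\theta_{2})$,
\[
\lambda^{\epsilon}(\theta_{2})\ge\inf_{\mathbb{Q}\in\widehat{\mathcal{Q}}(x)}J^{\epsilon}(x;\xi^{*}(\theta_{1}),\mathbb{Q},\theta_{2})\ge\inf_{\mathbb{Q}\in\widehat{\mathcal{Q}}(x)}J^{\epsilon}(x;\xi^{*}(\theta_{1}),\mathbb{Q},\theta_{1})-C\big(1+\beta_{\epsilon}(\theta_{1})^{\delta}\big)|\theta_{2}-\theta_{1}|,
\]
and the last infimum is $\ge\lambda^{\epsilon}(\theta_{1})$ by the second fact recalled above, so $\lambda^{\epsilon}(\theta_{2})\ge\lambda^{\epsilon}(\theta_{1})-C(1+\beta_{\epsilon}(\theta_{1})^{\delta})|\theta_{2}-\theta_{1}|$. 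Repeating the argument with the roles of $\theta_{1}$ and $\theta_{2}$ interchanged (now using $\xi^{*}(\theta_{2})$ and $\beta=\beta_{\epsilon}(\theta_{2})$) gives $\lambda^{\epsilon}(\theta_{1})\ge\lambda^{\epsilon}(\theta_{2})-C(1+\beta_{\epsilon}(\theta_{2})^{\delta})|\theta_{2}-\theta_{1}|$; since $\beta_{\epsilon}$ is nonincreasing the two bounds combine into (\ref{eq: Lip cont of Lambda wrt theta}) with $C_{0}(\theta_{1},\theta_{2}):=C\big(1+\beta_{\epsilon}(\theta_{1}\wedge\theta_{2})^{\delta}\big)$, which is finite by Proposition \ref{properties of b(a)}.

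The step I would be most careful about is the legitimacy of plugging the optimal control of the $\theta_{1}$‑game into the $\theta_{2}$‑game: this is not an abstract $\sup$–$\inf$ interchange but rests squarely on the two structural facts quoted from Theorem \ref{theorem: Verification result}, namely that the optimal policy confines the state to the fixed compact set $(0,\beta_{\epsilon}(\theta_{i})]$ against \emph{every} $\mathbb{Q}$, and that it already secures the payoff $\lambda^{\epsilon}(\theta_{i})$ against \emph{every} $\mathbb{Q}$. The confinement is precisely what makes the $\pi$‑perturbation bound uniform in $\mathbb{Q}$ — no moment estimates on the state are needed — whereas for an arbitrary admissible $\xi$ one would only be able to control $\mathbb{E}^{\mathbb{Q}}_{x}[\int_{0}^{T}|X^{\xi}_{t}|^{\delta}dt]$, which is not available. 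An alternative, purely PDE‑based route would instead compare $\phi_{\beta_{\epsilon}(\theta_{1})}(\cdot,\theta_{2})$ with $\phi_{\beta_{\epsilon}(\theta_{1})}(\cdot,\theta_{1})$ via the auxiliary problem (\ref{Auxiliary Second order ODE}) and the perturbation Lemma \ref{Pertrubed solutions lemma}, in the spirit of Lemmata \ref{lemma: monotonicity of fb wrt theta} and \ref{lemma: Robust lower bound of free boundary}; the probabilistic argument above is shorter and is the one I would present.
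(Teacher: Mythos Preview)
Your proof is correct and follows essentially the same strategy as the paper: exploit the game-value representation of $\lambda^{\epsilon}$, the confinement $X^{\xi^{*}(\theta_{i})}\in(0,\beta_{\epsilon}(\theta_{i})]$, and the $\theta$-Lipschitz property of $\pi$ from Assumption~\ref{Ass: Mean-Field Assumptions}-(\ref{ass: Lipschtiz property of pi wrt theta}). The only cosmetic difference is that the paper plugs in the specific pair $(\xi^{*}(\theta_{2}),\mathbb{Q}^{*}(\theta_{1}))$ and uses the saddle-point inequalities on both sides to get $\lambda^{\epsilon}(\theta_{2})-\lambda^{\epsilon}(\theta_{1})\le J^{\epsilon}(x;\xi^{*}(\theta_{2}),\mathbb{Q}^{*}(\theta_{1}),\theta_{2})-J^{\epsilon}(x;\xi^{*}(\theta_{2}),\mathbb{Q}^{*}(\theta_{1}),\theta_{1})$, whereas you keep the infimum over $\mathbb{Q}$ and rely only on (\ref{eq: last ineq step 1}); both routes yield the same constant $C_{0}=C\big(1+\beta_{\epsilon}(\theta_{1}\wedge\theta_{2})^{\delta}\big)$.
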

\begin{proof}
    For arbitrary $\theta_{1},\theta_{2}\in\mathbb{R}_{+}$, we recall (\ref{second version of functional}) and for convenience we denote $\mathbb{Q}_{i}^{*}:=\mathbb{Q}^{*}(\theta_{i}),\;i=1,2$. Then
    \begin{align}
        \lambda^{\epsilon}(\theta_{2})-\lambda^{\epsilon}(\theta_{1})&=\sup_{\xi\in\mathcal{A}_{e}(x)}\inf_{\mathbb{Q}\in\widehat{\mathcal{Q}}(x)}J^{\epsilon}(x;\xi,\mathbb{Q},\theta_{2})-\sup_{\xi\in\mathcal{A}_{e}(x)}\inf_{\mathbb{Q}\in\widehat{\mathcal{Q}}(x)}J^{\epsilon}(x;\xi,\mathbb{Q},\theta_{1}) \notag \\
        &\leq J^{\epsilon}(x;\xi^{*}(\theta_{2}),\mathbb{Q}_{1}^{*},\theta_{2})-J^{\epsilon}(x;\xi^{*}(\theta_{2}),\mathbb{Q}_{1}^{*},\theta_{1}) \notag \\
        &\leq \limsup_{T\uparrow\infty}\frac{1}{T}\mathbb{E}^{\mathbb{Q}_{1}^{*}}_{x}\bigg[ \int_{0}^{T}\big| \pi(X^{\xi^{*}(\theta_{2})}_{t},\theta_{2})- \pi(X^{\xi^{*}(\theta_{2})}_{t},\theta_{1})\big|dt \bigg] \notag \\
        &\leq C\limsup_{T\uparrow\infty}\frac{1}{T}\mathbb{E}^{\mathbb{Q}_{1}^{*}}_{x}\bigg[ \int_{0}^{T}(1+|X^{\xi^{*}(\theta_{2})}_{t}|^{\delta})dt\bigg]\big|\theta_{2}-\theta_{1}\big|, \notag
    \end{align}
    where in the second inequality we have used the property $\liminf_{n}\alpha_{n}-\liminf_{n}\beta_{n}\leq \limsup_{n}(\alpha_{n}-\beta_{n})$, and in the third inequality Assumption \ref{Ass: Mean-Field Assumptions}-(\ref{ass: Lipschtiz property of pi wrt theta}). Since, $X_{t}^{\xi^{*}(\theta_{2})}\in (0,\beta_{\epsilon}(\theta_{2})],\; \mathbb{Q}_{1}^{*}-$a.s., we obtain that
    \begin{equation}
        \mathbb{E}^{\mathbb{Q}_{1}^{*}}_{x}\big[ \big|X_{t}^{\xi^{*}(\theta_{2})} \big|^{\delta} \big]\leq \beta_{\epsilon}^{\delta}(\theta_{2})<\infty.
    \end{equation}
    Hence,
    \begin{equation}
        \limsup_{T\uparrow\infty}\frac{1}{T}\mathbb{E}^{\mathbb{Q}_{1}^{*}}\bigg[ \int_{0}^{T}(1+|X^{\xi^{*}(\theta_{2})}_{t}|^{\delta})dt\bigg]\leq \limsup_{T\uparrow\infty}\frac{1}{T}\int_{0}^{T}(1+\beta_{\epsilon}^{\delta}(\theta_{2}))dt=1+\beta_{\epsilon}^{\delta}(\theta_{2})<\infty.
    \end{equation}
    Then for $C_{0}(\theta_{2}):=C(1+\beta_{\epsilon}^{\delta}(\theta_{2})$ we conclude.
\end{proof}
Our next result is about the continuity of the map $\theta\mapsto V_{x}^{\epsilon}(x,\theta),\;x\in\mathbb{R}_{+}$.
\begin{proposition}
    \label{prop: Lip cont of phi wrt theta}
    The map $\theta\mapsto V_{x}^{\epsilon}(x,\theta),\;x\in\mathbb{R}_{+}$, is locally Lipschitz continuous.
\end{proposition}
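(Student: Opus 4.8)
The plan is to prove Lipschitz continuity of $\theta\mapsto V^{\epsilon}_{x}(x,\theta)$ on an arbitrary compact interval $K=[a,b]\subset\mathbb{R}_{+}$ (with a constant depending on $x$), which is equivalent to local Lipschitz continuity. Fix $K$ and $\theta_{1},\theta_{2}\in K$ with $\theta_{1}<\theta_{2}$, and write $\beta_{i}:=\beta_{\epsilon}(\theta_{i})$; by Lemma \ref{lemma: monotonicity of fb wrt theta}, $\beta_{1}\geq\beta_{2}$, by Lemma \ref{lemma: Robust lower bound of free boundary} and monotonicity both belong to the fixed compact $I:=[\underline{\beta}_{\epsilon},\beta_{\epsilon}(a)]\subset\mathbb{R}_{+}$, and $\overline{M}:=\sup_{\theta\in K}M(\theta)<\infty$ since, by the explicit formula in Proposition \ref{prop: uniform boundness of phi}, $M(\theta)=\big(|\lambda^{\epsilon}(\theta)|+\pi(\beta_{\epsilon}(\theta),\theta)\big)m^{\mathbb{P}}((0,\beta_{\epsilon}(\theta)))\leq\big(\sup_{K}|\lambda^{\epsilon}|+\sup_{I\times K}\pi\big)m^{\mathbb{P}}((0,\beta_{\epsilon}(a)))$. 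By (\ref{candidate solution}), $V^{\epsilon}_{x}(\cdot,\theta)=\phi_{\beta_{\epsilon}(\theta)}(\cdot,\theta)$ on $(0,\beta_{\epsilon}(\theta))$ and $V^{\epsilon}_{x}(\cdot,\theta)=-c(\cdot)$ on $[\beta_{\epsilon}(\theta),\infty)$; hence, according to whether $x\geq\beta_{1}$, $\beta_{2}\leq x<\beta_{1}$, or $x<\beta_{2}$, the difference $V^{\epsilon}_{x}(x,\theta_{2})-V^{\epsilon}_{x}(x,\theta_{1})$ equals, respectively, $0$, $-\big(\phi_{\beta_{1}}(x,\theta_{1})+c(x)\big)$, or $\phi_{\beta_{2}}(x,\theta_{2})-\phi_{\beta_{1}}(x,\theta_{1})$. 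The first case is trivial; the other two I would reduce to (i) a quantitative estimate on $\beta_{1}-\beta_{2}$, and (ii) perturbation estimates for the auxiliary problem (\ref{Auxiliary Second order ODE}).

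\textbf{Step (i): control of the free boundary.} By Theorem \ref{theorem: Verification result}, $\lambda^{\epsilon}(\theta)=\ell^{\epsilon}(\beta_{\epsilon}(\theta),\theta)$, and by (\ref{function for eigenvalue}) the $\theta$-dependence of $\ell^{\epsilon}$ enters only through $\pi(\cdot,\theta)$; hence, using Lemma \ref{lemma: Lip cont of Lambda wrt theta} and Assumptions \ref{Ass: Mean-Field Assumptions}-(\ref{ass: Lipschtiz property of pi wrt theta}),
\begin{align*}
\big|\ell^{\epsilon}(\beta_{1},\theta_{2})-\ell^{\epsilon}(\beta_{2},\theta_{2})\big|&=\big|\lambda^{\epsilon}(\theta_{1})+\big(\pi(\beta_{1},\theta_{2})-\pi(\beta_{1},\theta_{1})\big)-\lambda^{\epsilon}(\theta_{2})\big|\\
&\leq\big|\lambda^{\epsilon}(\theta_{2})-\lambda^{\epsilon}(\theta_{1})\big|+C\big(1+\beta_{\epsilon}(a)^{\delta}\big)|\theta_{2}-\theta_{1}|\leq C_{1}|\theta_{2}-\theta_{1}|.
\end{align*}
Since $\widehat{x}_{\epsilon}(\theta_{2})\leq\beta_{2}\leq\beta_{1}$ (Proposition \ref{properties of b(a)}-(\ref{infimum of b}) and Lemma \ref{lemma: monotonicity of fb wrt theta}) and $x\mapsto\ell^{\epsilon}(x,\theta_{2})$ is strictly decreasing on $[\widehat{x}_{\epsilon}(\theta_{2}),\infty)$ by Assumption \ref{Assumption for l}-(\ref{Condition of lambdas der 1}), inverting $\ell^{\epsilon}(\cdot,\theta_{2})$ on the compact $I$ yields $|\beta_{1}-\beta_{2}|\leq C_{2}|\theta_{2}-\theta_{1}|^{1/2}$, the square root reflecting the possible flatness of $\ell^{\epsilon}(\cdot,\theta_{2})$ at its maximiser $\widehat{x}_{\epsilon}(\theta_{2})$; the loss of a power is harmless because the matching-layer term below carries a factor $(\beta_{1}-\beta_{2})^{2}$.

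\textbf{Step (ii): perturbation estimates.} Plugging $x=\beta$ into (\ref{Auxiliary Second order ODE}) with $\gamma=0$ (recall $\phi_{\beta}(\cdot,\theta)\in C^{1}(\mathbb{R}_{+})$ by Proposition \ref{E&U of first order ODE}) and using (\ref{function for eigenvalue}) — the same computation as in the proof of Theorem \ref{E&U of free boundary problem} — shows that, for every $\beta$ and $\theta$, $\phi_{\beta}(\cdot,\theta)+c$ has a double zero at $\beta$; moreover (\ref{Auxiliary Second order ODE}) expresses $\partial_{x}\phi_{\beta}$, hence $\partial_{xx}\phi_{\beta}$, in terms of $\sigma,b,\pi$ and $\phi_{\beta}$, with $|\phi_{\beta_{1}}(\cdot,\theta_{1})|\leq\overline{M}$ on $I$ by Proposition \ref{prop: uniform boundness of phi}. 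Consequently, when $\beta_{2}\leq x<\beta_{1}$ a second-order Taylor expansion at $\beta_{1}$ gives $|\phi_{\beta_{1}}(x,\theta_{1})+c(x)|\leq C_{3}(\beta_{1}-x)^{2}\leq C_{3}C_{2}^{2}|\theta_{2}-\theta_{1}|$. When $x<\beta_{2}$, put $g:=V^{\epsilon}_{x}(\cdot,\theta_{2})-V^{\epsilon}_{x}(\cdot,\theta_{1})$; subtracting the two instances of (\ref{Auxiliary Second order ODE}) on $(0,\beta_{2})$ and linearising the quadratic term through $(V^{\epsilon}_{x}(\cdot,\theta_{2}))^{2}-(V^{\epsilon}_{x}(\cdot,\theta_{1}))^{2}=(V^{\epsilon}_{x}(\cdot,\theta_{1})+V^{\epsilon}_{x}(\cdot,\theta_{2}))g$ shows that $g$ solves a linear first-order ODE whose source is bounded by $\big(C_{0}+C(1+\beta_{\epsilon}(a)^{\delta})\big)|\theta_{2}-\theta_{1}|$ (Lemma \ref{lemma: Lip cont of Lambda wrt theta}, Assumptions \ref{Ass: Mean-Field Assumptions}-(\ref{ass: Lipschtiz property of pi wrt theta})) and whose value at $\beta_{2}$ satisfies $|g(\beta_{2})|=|\phi_{\beta_{1}}(\beta_{2},\theta_{1})+c(\beta_{2})|\leq C_{3}C_{2}^{2}|\theta_{2}-\theta_{1}|$ by the previous bound. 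Integrating this ODE from $x$ to $\beta_{2}$ against the integrating factor $\exp\big(\int_{x}^{\beta_{2}}\tfrac{2b}{\sigma^{2}}\big)$ — exactly the manipulation in the proof of Proposition \ref{prop: uniform boundness of phi}, with the bounded coefficient $\tfrac{\epsilon}{2}\sigma^{2}\big(V^{\epsilon}_{x}(\cdot,\theta_{1})+V^{\epsilon}_{x}(\cdot,\theta_{2})\big)$ playing the role of the quadratic term there — and absorbing the linear-in-$g$ term by Gr\"onwall gives $|g(x)|\leq C_{4}(x)|\theta_{2}-\theta_{1}|$. Collecting the three cases proves the proposition.

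\textbf{Main obstacle.} The crux is Step (i): transforming the Lipschitz continuity of the eigenvalue $\lambda^{\epsilon}$ into control of the free boundary requires inverting $\ell^{\epsilon}(\cdot,\theta)$ near its maximiser $\widehat{x}_{\epsilon}(\theta)$, where its derivative vanishes, so that in general only H\"older-$1/2$ continuity of $\theta\mapsto\beta_{\epsilon}(\theta)$ is available; it is the square $(\beta_{1}-\beta_{2})^{2}$, arising from the double zero of $\phi_{\beta}+c$ at the free boundary, that restores a genuine Lipschitz bound. A secondary point is the integrating-factor/Gr\"onwall estimate on $(0,\beta_{2})$, which has to tolerate the possible degeneracy of $1/\sigma^{2}$ near $0$; this is handled as in Proposition \ref{prop: uniform boundness of phi}, where the relevant integrand is shown to be integrable against the speed measure (Assumption \ref{Ass: Non explosion}).
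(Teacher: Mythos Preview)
Your three-region split and the inner-region Gr\"onwall argument ($x<\beta_{2}$) match the paper, but Step (i) has a genuine gap. The H\"older-$1/2$ bound $|\beta_{1}-\beta_{2}|\leq C_{2}|\theta_{2}-\theta_{1}|^{1/2}$ requires a quantitative inverse modulus for $\ell^{\epsilon}(\cdot,\theta_{2})$ near its maximiser $\widehat{x}_{\epsilon}(\theta_{2})$, essentially $|\ell^{\epsilon}(u,\theta_{2})-\ell^{\epsilon}(v,\theta_{2})|\geq c_{0}|u-v|^{2}$ for $u,v\geq\widehat{x}_{\epsilon}(\theta_{2})$. Under the paper's assumptions ($b,\sigma,c\in C^{1}$, $\pi\in C^{2}$), $\ell^{\epsilon}$ is at best $C^{1}$; with only $C^{1}$ regularity and a vanishing derivative at the maximum, the local decay could be $-(x-\widehat{x}_{\epsilon}(\theta_{2}))^{p}$ for any $p>1$, giving an inverse modulus of only H\"older-$1/p$, and then your Taylor step delivers merely $|\theta_{2}-\theta_{1}|^{2/p}$, which is Lipschitz only when $p\leq 2$. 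Nor can you sidestep the maximiser by arguing that $\beta_{\epsilon}(\theta)-\widehat{x}_{\epsilon}(\theta)\geq\delta>0$ uniformly on $K$: establishing this would need continuity of $\theta\mapsto\beta_{\epsilon}(\theta)$, which is proved only \emph{after} the present proposition (Theorem \ref{theorem: continuity of free boundary}), so the argument would be circular.

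The paper bypasses this entirely by never estimating $|\beta_{1}-\beta_{2}|$. In the middle layer it writes the ODE satisfied by $F_{2}(x):=V_{x}^{\epsilon}(x,\theta_{1})+c(x)$, with source $\big(\lambda^{\epsilon}(\theta_{1})-\ell^{\epsilon}(x,\theta_{2})\big)-\big(\pi(x,\theta_{1})-\pi(x,\theta_{2})\big)$, and exploits only the \emph{sign} of $(\ell^{\epsilon})_{x}$: since $\widehat{x}_{\epsilon}(\theta_{2})\leq\beta_{2}\leq x<\beta_{1}$, monotonicity of $\ell^{\epsilon}(\cdot,\theta_{2})$ on $[\widehat{x}_{\epsilon}(\theta_{2}),\infty)$ gives $\ell^{\epsilon}(x,\theta_{2})\leq\ell^{\epsilon}(\beta_{2},\theta_{2})=\lambda^{\epsilon}(\theta_{2})$, so the source is controlled by $|\lambda^{\epsilon}(\theta_{2})-\lambda^{\epsilon}(\theta_{1})|+|\pi(x,\theta_{2})-\pi(x,\theta_{1})|\leq C|\theta_{2}-\theta_{1}|$ via Lemma \ref{lemma: Lip cont of Lambda wrt theta} and Assumptions \ref{Ass: Mean-Field Assumptions}-(\ref{ass: Lipschtiz property of pi wrt theta}). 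A Gr\"onwall argument on $[x,\beta_{1}]\subset[\underline{\beta}_{\epsilon},\beta_{\epsilon}(a)]$ then yields the Lipschitz bound for $F_{2}$ directly, the width $\beta_{1}-\beta_{2}$ never entering.
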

\begin{proof}
    Let $\theta_{1},\theta_{2}\in\mathbb{R}_{+}$. We focus on the case of $\theta_{1}\leq \theta_{2}$, since the proof is analogous in the other case. By Lemma \ref{lemma: monotonicity of fb wrt theta}, we have that $\beta_{\epsilon}(\theta_{2})\leq \beta_{\epsilon}(\theta_{1})$. For $x\in \mathbb{R}_{+}$, we know that $V_{x}^{\epsilon}(\cdot,\theta_{i})$ satisfies (\ref{Auxiliary Second order ODE}), for $\beta=\beta_{\epsilon}(\theta_{i}),\;i=1,2$ and $\gamma=0$. Hence,
    \begin{align}
        V_{x}^{\epsilon}(x,\theta_{1})&-V_{x}^{\epsilon}(x,\theta_{2})=\big(V_{x}^{\epsilon}(x,\theta_{1})-V_{x}^{\epsilon}(x,\theta_{2})\big)\boldsymbol{1}_{\{x\in (0,\beta_{\epsilon}(\theta_{2}))\}}\notag \\
        &+\big(V_{x}^{\epsilon}(x,\theta_{1})-V_{x}^{\epsilon}(x,\theta_{2})\big)\boldsymbol{1}_{\{x\in [\beta_{\epsilon}(\theta_{2}),\beta_{\epsilon}(\theta_{1})]\}}+\big(V_{x}^{\epsilon}(x,\theta_{1})-V_{x}^{\epsilon}(x,\theta_{2})\big)\boldsymbol{1}_{\{x\in [\beta_{\epsilon}(\theta_{1}),\infty)\}} \notag \\
        &=\big(V_{x}^{\epsilon}(x,\theta_{1})-V_{x}^{\epsilon}(x,\theta_{2})\big)\boldsymbol{1}_{\{x\in (0,\beta_{\epsilon}(\theta_{2}))\}}+\big(V_{x}^{\epsilon}(x,\theta_{1})-c(x)\big)\boldsymbol{1}_{\{x\in [\beta_{\epsilon}(\theta_{2}),\beta_{\epsilon}(\theta_{1}))\}},
    \end{align}
    where we have used the fact that $V_{x}^{\epsilon}(x,\theta_{i})=c(x),\; x\in [\beta_{
    \epsilon}(\theta_{1}),\infty)$ for $i=1,2$.

    Take now $x\in [\beta_{\epsilon}(\theta_{2}),\beta_{\epsilon}(\theta_{1}))$ and define $F_{2}^{(\theta_{1},\theta_{2})}(x):=V_{x}^{\epsilon}(x,\theta_{1})-c(x)$. Notice that, actually, by Proposition \ref{section: Existence of a solution to First ODE}, there exists $\overline{M}(\theta_{1},\theta_{2})>0$ such that $\sup_{x\in [\beta_{\epsilon}(\theta_{2}),\beta_{\epsilon}(\theta_{1}))]}\big| F_{2}^{(\theta_{1},\theta_{2})}(x)\big|\leq \overline{M}(\theta_{1},\theta_{2})$.\ We start by showing that there exists $C_{2}(\theta_{1},\theta_{2})>0$ such that $\big|F^{(\theta_{1},\theta_{2})}_{2}(x)\big|\leq C_{2}(\theta_{1},\theta_{2})|\theta_{1}-\theta_{2}|,\;x\in [\beta_{\epsilon}(\theta_{2}),\beta_{\epsilon}(\theta_{1}))$. To that end, notice that by (\ref{Auxiliary Second order ODE}) (for $\beta=\beta_{\epsilon}(\theta_{1})$) $F^{(\theta_{1},\theta_{2})}_{2}$ satisfies
    \begin{align}
            \label{eq: ODE for F_2}
            \frac{1}{2}\sigma^{2}(x)\partial_{x}F^{(\theta_{1},\theta_{2})}_{2}&+b(x)F^{(\theta_{1},\theta_{2})}_{2}-\frac{\epsilon}{2}\sigma^{2}(x)(F_{2}^{(\theta_{1},\theta_{2})})^{2}(x)+\epsilon\sigma^{2}(x)c(x)F^{(\theta_{1},\theta_{2})}_{2}(x) \\
            &=\big(\lambda^{\epsilon}(\theta_{1})-\ell^{\epsilon}(x,\theta_{2})\big)-\big(\pi(x,\theta_{1})-\pi(x,\theta_{2})\big),\;x\in [\beta_{\epsilon}(\theta_{2}),\beta_{\epsilon}(\theta_{1})), \notag
    \end{align}
    with $F_{2}^{(\theta_{1},\theta_{2})}(\beta_{\epsilon}(\theta_{1}))=0$. For $x\in [\beta_{\epsilon}(\theta_{2}),\beta_{\epsilon}(\theta_{1}))$, by Assumption \ref{Assumption for l}-(\ref{Condition of lambdas der 1}) we have that
    \begin{equation}
       \label{eq: difference of lambdas ineq}
       \ell^{\epsilon}(x,\theta_{2})-\lambda^{\epsilon}(\theta_{1})\leq \ell^{\epsilon}(\beta_{\epsilon}(\theta_{2}),\theta_{2})-\lambda^{\epsilon}(\theta_{1})=\lambda^{\epsilon}(\theta_{2})-\lambda^{\epsilon}(\theta_{1}),
    \end{equation}
    and, by the fundamental theorem of calculus (for $x\in [\beta_{\epsilon}(\theta_{2}),\beta_{\epsilon}(\theta_{1}))$, using (\ref{eq: ODE for F_2}) and (\ref{eq: difference of lambdas ineq}) we have
    \begin{align}
        0\leq F^{(\theta_{1},\theta_{2})}_{2}(x)&=F^{(\theta_{1},\theta_{2})}_{2}(\beta_{\epsilon}(\theta_{1})))-\int_{x}^{\beta_{\epsilon}(\theta_{1})}\partial_{x}F^{(\theta_{1},\theta_{2})}_{2}(y)dy \notag \\
        &=-\int_{x}^{\beta_{\epsilon}(\theta_{1})}\frac{2}{\sigma^{2}(y)}\bigg( b(x)+\frac{\epsilon}{2}\sigma^{2}(y)F^{(\theta_{1},\theta_{2})}_{2}(y)-\epsilon\sigma^{2}(y)c(y) \bigg)F^{(\theta_{1},\theta_{2})}_{2}(y)dy \notag \\
        &\quad-\int_{x}^{\beta_{\epsilon}(\theta_{1})}\frac{2}{\sigma^{2}(y)}\bigg(\big(\lambda^{\epsilon}(\theta_{1})-\ell^{\epsilon}(y,\theta_{2})\big)-\big(\pi(y,\theta_{1})-\pi(y,\theta_{2})\big)\bigg)dy \notag \\
        &\leq\int_{x}^{\beta_{\epsilon}(\theta_{1})}\frac{2}{\sigma^{2}(y)}\bigg( +b(x)+\epsilon\sigma^{2}(y)c(y) \bigg)F^{(\theta_{1},\theta_{2})}_{2}(y)dy \notag \\
        &\quad+\int_{x}^{\beta_{\epsilon}(\theta_{1})}\frac{2}{\sigma^{2}(y)}\bigg(\big(\lambda^{\epsilon}(\theta_{2})-\lambda^{\epsilon}(\theta_{1})\big)-\big(\pi(y,\theta_{2})-\pi(y,\theta_{1})\big)\bigg)dy. \label{eq: first ineq for cont wrt theta}
    \end{align}
    Hence,
    \begin{align}
        0\leq F^{(\theta_{1},\theta_{2})}_{2}(x)&\leq \bigg(\int_{x}^{\beta_{\epsilon}(\theta_{1})}\frac{2}{\sigma^{2}(y)}dy\bigg)\big|\lambda^{\epsilon}(\theta_{2})-\lambda^{\epsilon}(\theta_{1})\big|+\int_{x}^{\beta_{\epsilon}(\theta_{1})}\frac{2}{\sigma^{2}(y)}\big| \pi(y,\theta_{2})-\pi(y,\theta_{1}) \big|dy\notag \\
        &\quad+\int_{x}^{\beta_{\epsilon}(\theta_{1})}\bigg(\frac{2|b(x)|}{\sigma^{2}(y)}+2\epsilon\overline{c}\bigg)F_{2}^{(\theta_{1},\theta_{2})}(y) dy.
    \end{align}
    Then, by Grönwall inequality,
    \begin{align}
        0\leq F^{(\theta_{1},\theta_{2})}_{2}(x)&\leq \bigg[\bigg(\int_{x}^{\beta_{\epsilon}(\theta_{1})}\frac{2}{\sigma^{2}(y)}dy\bigg)\big|\lambda^{\epsilon}(\theta_{2})-\lambda^{\epsilon}(\theta_{1})\big|+\int_{x}^{\beta_{\epsilon}(\theta_{1})}\frac{2}{\sigma^{2}(y)}\big| \pi(x,\theta_{1})-\pi(x,\theta_{2}) \big|dy\bigg]\notag \\
        &\quad \cdot\exp{\bigg( \int_{x}^{\beta_{\epsilon}(\theta_{1})}\bigg(\frac{2|b(x)|}{\sigma^{2}(y)}+2\epsilon\overline{c}\bigg)dy\bigg)}, \notag
    \end{align}
    which, using Assumption \ref{Assumptions for profit fun} and Lemma \ref{lemma: Lip cont of Lambda wrt theta}, yields
    \begin{align}
        0&\leq F^{(\theta_{1},\theta_{2})}_{2}(x)\notag \\
        &\leq \bigg(C_{0}\bigg(\int_{x}^{\beta_{\epsilon}(\theta_{1})}\frac{2}{\sigma^{2}(y)}dy\bigg)+ \int_{x}^{\beta_{\epsilon}(\theta_{1})}\frac{2C(1+|y|^{\delta})}{\sigma^{2}(y)}dy\bigg) \notag\\
        &\quad\cdot\exp{\bigg( \int_{x}^{\beta_{\epsilon}(\theta_{1})}\bigg(\frac{2|b(x)|}{\sigma^{2}(y)}+2\epsilon\overline{c}\bigg)dy\bigg)}|\theta_{2}-\theta_{1}|\notag \\
        &\leq\bigg(C_{0}\bigg(\int_{\beta_{\epsilon}(\theta_{2})}^{\beta_{\epsilon}(\theta_{1})}\frac{2}{\sigma^{2}(y)}dy\bigg)+ \int_{\beta_{\epsilon}(\theta_{2})}^{\beta_{\epsilon}(\theta_{1})}\frac{2C(1+|y|^{\delta})}{\sigma^{2}(y)}dy\bigg) \notag\\
        &\quad\cdot\exp{\bigg( \int_{\beta_{\epsilon}(\theta_{2})}^{\beta_{\epsilon}(\theta_{1})}\bigg(\frac{2|b(x)|}{\sigma^{2}(y)}+2\epsilon\overline{c}\bigg)dy\bigg)}|\theta_{2}-\theta_{1}|=:C_{2}(\theta_{1},\theta_{2})|\theta_{2}-\theta_{1}|.
        \label{ineq: Lip property for F1}
    \end{align}
    This gives the desired result. 
    
    Take now $x\in (0,\beta_{\epsilon}(\theta_{2})]$. We want to show that there exists $C_{1}(x,\theta_{1},\theta_{2})>0$ such that, $F_{1}^{(\theta_{1},\theta_{2})}(x):=V^{\epsilon}_{x}(x,\theta_{1})-V^{\epsilon}_{x}(x,\theta_{2})$, satisfies $\big|F_{1}^{(\theta_{1},\theta_{2})}(x)\big|\leq C_{1}(x,\theta_{1},\theta_{2})|\theta_{2}-\theta_{1}|,\; x\in (0,\beta_{\epsilon}(\theta_{2})]$. Notice that $F_{1}^{(\theta_{1},\theta_{2})}$ is the unique classical solution to 
    \begin{align}
            &\frac{1}{2}\sigma^{2}(x)\partial_{x}F_{1}^{(\theta_{1},\theta_{2})}(x)+b(x)F_{1}^{(\theta_{1},\theta_{2})}(x)-\frac{\epsilon}{2}\sigma^{2}(x)(F_{1}^{(\theta_{1},\theta_{2})})^{2}(x)-\epsilon\sigma^{2}(x)V_{x}^{\epsilon}(x,\theta_{1})F_{1}^{(\theta_{1},\theta_{2})}(x)\\
            &=\big(\lambda^{\epsilon}(\theta_{1})-\lambda^{\epsilon}(\theta_{2})\big)-\big(\pi(x,\theta_{1})-\pi(x,\theta_{2})\big),\;x\in (0,\beta_{\epsilon}(\theta_{2})), \notag
    \end{align}
    with $F_{1}^{(\theta_{1},\theta_{2})}(\beta_{\epsilon}(\theta_{2}))=F_{2}^{(\theta_{1},\theta_{2})}(\beta_{\epsilon}(\theta_{2}))$. Following the same steps as in the case of $F_{2}$, we have that
    \begin{align}
        &\big|F^{(\theta_{1},\theta_{2})}_{1}(x)\big|=\big|F^{(\theta_{1},\theta_{2})}_{1}(\beta_{\epsilon}(\theta_{2}))-\int_{x}^{\beta_{\epsilon}(\theta_{2})}\partial_{x}F^{(\theta_{1},\theta_{2})}_{1}(y)dy\big| \notag \\
        &\leq \big|F^{(\theta_{1},\theta_{2})}_{1}(\beta_{\epsilon}(\theta_{2}))\big|+\int_{x}^{\beta_{\epsilon}(\theta_{2})}\frac{2}{\sigma^{2}(y)}\big| b(x)+\frac{\epsilon}{2}\sigma^{2}(y)F^{(\theta_{1},\theta_{2})}_{1}(y)+\epsilon\sigma^{2}(y)V_{x}^{\epsilon}(y,\theta_{1}) \big|\cdot\big|F^{(\theta_{1},\theta_{2})}_{1}(y)\big|dy \notag \\
        &\quad+\int_{x}^{\beta_{\epsilon}(\theta_{2})}\frac{2}{\sigma^{2}(y)}\bigg(\big|\lambda^{\epsilon}(\theta_{2})-\lambda^{\epsilon}(\theta_{1})\big|+\big|\pi(y,\theta_{2})-\pi(y,\theta_{1})\big|\bigg)dy \notag \\
        &\leq \big|F^{(\theta_{1},\theta_{2})}_{1}(\beta_{\epsilon}(\theta_{2}))\big|+\int_{x}^{\beta_{\epsilon}(\theta_{2})}\bigg(\frac{2|b(x)|}{\sigma^{2}(y)}+\epsilon\big|F^{(\theta_{1},\theta_{2})}_{1}(y)\big|+2\epsilon \big|V^{\epsilon}_{x}(y,\theta_{1})\big|\bigg)\big|F^{(\theta_{1},\theta_{2})}_{1}(y)\big|dy \notag \\
        &\quad+\bigg( \int_{x}^{\beta_{\epsilon}(\theta_{2})}\frac{2}{\sigma^{2}(y)}dy\bigg)\big|\lambda^{\epsilon}(\theta_{2})-\lambda^{\epsilon}(\theta_{1}) \big|+\int_{x}^{\beta_{\epsilon}(\theta_{2})}\frac{2}{\sigma^{2}(y)}\big|\pi(y,\theta_{2})-\pi(y,\theta_{1})\big|dy. \label{eq: first eq for F_1}
    \end{align}
    From (\ref{ineq: Lip property for F1}), we know that $\big|F_{2}^{(\theta_{1},\theta_{2})}(x)\big|\leq C_{2}(\theta_{1},\theta_{2})|\theta_{2}-\theta_{1}|$ on $[\beta_{\epsilon}(\theta_{2}),\beta_{\epsilon}(\theta_{1}))$. 
    From Assumption \ref{Ass: Mean-Field Assumptions}-(\ref{ass: Lipschtiz property of pi wrt theta}), Proposition \ref{prop: uniform boundness of phi} and Lemma \ref{lemma: Lip cont of Lambda wrt theta}, we obtain
    \begin{align}
        \label{eq: Lip cont of boundary equation}
        \big|F^{(\theta_{1},\theta_{2})}_{1}(x)\big|&\leq \bigg( C_{2}(\theta_{1},\theta_{2})+C_{0}\int_{x}^{\beta_{\epsilon}(\theta_{2})}\frac{2}{\sigma^{2}(y)}dy+C\int_{x}^{\beta_{\epsilon}(\theta_{2})}\frac{2(1+|y|^{\delta})}{\sigma^{2}(y)}dy \bigg)\big|\theta_{2}-\theta_{1}\big| \notag \\
        &\quad+\int_{x}^{\beta_{\epsilon}(\theta_{2})}\bigg(\frac{2|b(x)|}{\sigma^{2}(y)}+\epsilon(|V^{\epsilon}_{x}(y,\theta_{1})|+|V^{\epsilon}_{x}(y,\theta_{2})|)+2\epsilon |V^{\epsilon}_{x}(y,\theta_{1})|\big)\bigg)\big|F^{(\theta_{1},\theta_{2})}_{1}(y)\big|dy \notag \\
        &=\bigg( C_{2}(\theta_{1},\theta_{2})+C_{0}\int_{x}^{\beta_{\epsilon}(\theta_{2})}\frac{2}{\sigma^{2}(y)}dy+C\int_{x}^{\beta_{\epsilon}(\theta_{2})}\frac{2(1+|y|^{\delta})}{\sigma^{2}(y)}dy \bigg)\big|\theta_{2}-\theta_{1}\big| \notag \\
        &\quad+\int_{x}^{\beta_{\epsilon}(\theta_{2})}\bigg(\frac{2|b(x)|}{\sigma^{2}(y)}+\epsilon(V^{\epsilon}_{x}(y,\theta_{1})+V^{\epsilon}_{x}(y,\theta_{2}))+2\epsilon V^{\epsilon}_{x}(y,\theta_{1})\big)\bigg)\big|F^{(\theta_{1},\theta_{2})}_{1}(y)\big|dy
    \end{align}
    where in the equality we use the fact that $V_{x}^{\epsilon}(y,\theta)$ is positive on $[x,\beta_{\epsilon}(\theta_{2})]$. Then, by Grönwall inequality and Proposition \ref{prop: uniform boundness of phi}, 
    \begin{align}
        \big|F^{(\theta_{1},\theta_{2})}_{1}(x)\big|&\leq \bigg[C_{2}(\theta_{1},\theta_{2})+C_{0}\int_{x}^{\beta_{\epsilon}(\theta_{2})}\frac{2}{\sigma^{2}(y)}dy+C\int_{x}^{\beta_{\epsilon}(\theta_{2})}\frac{2(1+|y|^{\delta})}{\sigma^{2}(y)}dy \bigg]\notag \\
        &\quad\cdot\exp\bigg( \int_{x}^{\beta_{\epsilon}(\theta_{2})}\frac{2|b(x)|}{\sigma^{2}(y)}dy+\epsilon (3M(\theta_{1})+M(\theta_{2}))\bigg)\big|\theta_{2}-\theta_{1}\big|=:\overline{C}_{2}(x,\theta_{1},\theta_{2})\big|\theta_{2}-\theta_{1}\big|. \label{ineq: upper Lip prop of F1}
    \end{align} 
\end{proof}
\begin{corollary}
    \label{cor: joint cont of Vx}
    The map $(x,\theta)\mapsto V^{\epsilon}_{x}(x,\theta)$ is continuous on $\mathbb{R}^{2}_{+}$.
\end{corollary}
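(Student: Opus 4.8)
The plan is to combine the separate continuity of $V^{\epsilon}_x$ in each of its two arguments — both already available — into joint continuity, the only extra ingredient being that the Lipschitz modulus in $\theta$ produced in Proposition~\ref{prop: Lip cont of phi wrt theta} can be taken \emph{locally uniform in $x$}. Fix $(x_0,\theta_0)\in\mathbb{R}^2_+$ and a sequence $(x_n,\theta_n)\to(x_0,\theta_0)$, and split
\[
\big|V^{\epsilon}_x(x_n,\theta_n)-V^{\epsilon}_x(x_0,\theta_0)\big|\le\big|V^{\epsilon}_x(x_n,\theta_n)-V^{\epsilon}_x(x_n,\theta_0)\big|+\big|V^{\epsilon}_x(x_n,\theta_0)-V^{\epsilon}_x(x_0,\theta_0)\big|.
\]
The second term tends to $0$ because $V^{\epsilon}(\cdot,\theta_0)\in C^{2}(\mathbb{R}_+)$ by Theorem~\ref{E&U of free boundary problem}, so $V^{\epsilon}_x(\cdot,\theta_0)$ is continuous and $x_n\to x_0$. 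It thus suffices to bound the first term by $\widehat{C}\,|\theta_n-\theta_0|$ for a constant $\widehat{C}$ independent of $n$.

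For this, enclose $\theta_0$ in a compact interval $[\underline{\theta},\overline{\theta}]\subset\mathbb{R}_+$ (so $\theta_n\in[\underline{\theta},\overline{\theta}]$ eventually) and the $x_n$ in a compact $[\underline{x},\overline{x}]\subset\mathbb{R}_+$. By the monotonicity of $\theta\mapsto\beta_{\epsilon}(\theta)$ (Lemma~\ref{lemma: monotonicity of fb wrt theta}) together with the uniform lower bound of Lemma~\ref{lemma: Robust lower bound of free boundary}, we have $\beta_{\epsilon}(\theta)\in[\underline{\beta}_{\epsilon},\beta_{\epsilon}(\underline{\theta})]=:[\underline{\beta}_{\epsilon},\overline{\beta}]$ for every $\theta\in[\underline{\theta},\overline{\theta}]$. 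Now revisit the constants $\overline{C}_2(x,\theta_1,\theta_2)$ in (\ref{ineq: upper Lip prop of F1}) and $C_2(\theta_1,\theta_2)$ in (\ref{ineq: Lip property for F1}): they are assembled from (i) the Lipschitz constant $C_0$ of $\theta\mapsto\lambda^{\epsilon}(\theta)$, bounded over $[\underline{\theta},\overline{\theta}]$ by Lemma~\ref{lemma: Lip cont of Lambda wrt theta}; (ii) the uniform bounds on $V^{\epsilon}_x(\cdot,\theta)$ and on $V^{\epsilon}_x(\cdot,\theta_1)-V^{\epsilon}_x(\cdot,\theta_2)$ from Proposition~\ref{prop: uniform boundness of phi}, which through the formula $M(\theta)=(|\lambda^{\epsilon}(\theta)|+\pi(\beta_{\epsilon}(\theta),\theta))\,m^{\mathbb{P}}((0,\beta_{\epsilon}(\theta)))$ stay uniformly bounded over $[\underline{\theta},\overline{\theta}]$ since $\beta_{\epsilon}(\theta)$ remains in the fixed compact $[\underline{\beta}_{\epsilon},\overline{\beta}]$ and $\lambda^{\epsilon}$, $\pi$ and $m^{\mathbb{P}}((0,\cdot))$ are bounded there; and (iii) integrals $\int_{x}^{\beta_{\epsilon}(\theta)}\frac{2}{\sigma^{2}(y)}dy$, $\int_{x}^{\beta_{\epsilon}(\theta)}\frac{2C(1+|y|^{\delta})}{\sigma^{2}(y)}dy$ and $\exp\big(\int_{x}^{\beta_{\epsilon}(\theta)}(\frac{2|b(y)|}{\sigma^{2}(y)}+\epsilon K)dy\big)$. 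Since $\underline{x}\le x$ and $\beta_{\epsilon}(\theta)\le\overline{\beta}$, each integral in (iii) is dominated by the same integral over $(0,\overline{\beta})$, which is finite by Assumption~\ref{Ass: Non explosion} exactly as in the proof of Proposition~\ref{prop: uniform boundness of phi}; hence these terms are bounded by a constant depending only on $[\underline{x},\overline{x}]$ and $[\underline{\theta},\overline{\theta}]$. Collecting (i)–(iii) produces the desired $n$-independent constant $\widehat{C}$, so that $\big|V^{\epsilon}_x(x_n,\theta_n)-V^{\epsilon}_x(x_n,\theta_0)\big|\le\widehat{C}\,|\theta_n-\theta_0|\to0$, which concludes the proof.

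The only genuinely delicate point is (iii) together with the bookkeeping in (ii): one must check that the Grönwall constant of Proposition~\ref{prop: Lip cont of phi wrt theta} — a priori depending on the base point $x$ and on $\theta_1,\theta_2$ through the moving free boundary $\beta_{\epsilon}$ — does not degenerate as these parameters range over compacts. This is precisely what Lemmata~\ref{lemma: monotonicity of fb wrt theta} and~\ref{lemma: Robust lower bound of free boundary} deliver, by trapping $\beta_{\epsilon}(\theta)$ in a fixed compact subinterval of $\mathbb{R}_+$ and thereby freezing all the $\theta$-dependent limits of integration, so that the finiteness of the speed-measure-type integrals (Assumption~\ref{Ass: Non explosion}) applies uniformly. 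Everything else is the routine triangle inequality displayed above.
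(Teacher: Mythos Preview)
Your approach is sound and fills a gap the paper leaves open: the corollary is stated without proof, presumably as an immediate consequence of Proposition~\ref{prop: Lip cont of phi wrt theta} and the $C^2$-regularity of $V^{\epsilon}(\cdot,\theta)$ from Theorem~\ref{E&U of free boundary problem}. As you correctly observe, passing from separate to joint continuity requires the $\theta$-Lipschitz modulus produced in Proposition~\ref{prop: Lip cont of phi wrt theta} to be locally uniform in $x$, and your tracking of the constants $C_2$ and $\overline{C}_2$ through Lemmata~\ref{lemma: monotonicity of fb wrt theta}, \ref{lemma: Robust lower bound of free boundary}, \ref{lemma: Lip cont of Lambda wrt theta} and Proposition~\ref{prop: uniform boundness of phi} is the right strategy.

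There is, however, a slip in your justification of point (iii). You dominate the integrals $\int_x^{\beta_\epsilon(\theta)}\frac{2}{\sigma^2(y)}\,dy$, $\int_x^{\beta_\epsilon(\theta)}\frac{2(1+|y|^{\delta})}{\sigma^2(y)}\,dy$ and the exponential term by the corresponding integrals over $(0,\overline{\beta})$ and claim finiteness via Assumption~\ref{Ass: Non explosion}. This is incorrect: Assumption~\ref{Ass: Non explosion} concerns the speed measure $m^{\mathbb{P}}$, which carries the additional factor $1/S^{\mathbb{P}}_x(y)$; the bare integral $\int_0^{\overline{\beta}}\frac{2}{\sigma^2(y)}\,dy$ can diverge (in the benchmark $\sigma(y)=\sigma y$ of Remark~\ref{remark: benchmark SDE} one gets $\int_0^{\overline{\beta}}\frac{2}{\sigma^2 y^2}\,dy=\infty$). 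The fix is immediate and already implicit in your setup: since $x\ge\underline{x}>0$, the integrals are actually over the compact interval $[\underline{x},\overline{\beta}]\subset\mathbb{R}_+$, on which $\sigma$ is continuous and strictly positive and $b$ is continuous (Assumption~\ref{Assumption for dynamics of SDE}), so the integrands are bounded and the integrals finite. No appeal to Assumption~\ref{Ass: Non explosion} is needed for this step.
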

Introduce the inaction region $\mathcal{C}$ and the action region $\mathcal{S}$ as it follows:
\begin{align}
    \mathcal{C}&:=\{(x,\theta)\in\mathbb{R}_{+}^{2}: V^{\epsilon}_{x}(x,\theta)> c(x)\} \\
    \mathcal{S}&:=\{(x,\theta)\in\mathbb{R}_{+}^{2}: V^{\epsilon}_{x}(x,\theta)=c(x)\},
\end{align}
and remember that 
\begin{equation}
    \label{eq: free boundary for the joint cont}
    \beta_{\epsilon}(\theta)=\inf\{x\in\mathbb{R}_{+}:V^{\epsilon}_{x}(x,\theta)=c(x)\}.
\end{equation}
We then have the following continuity result.
\begin{theorem}
\label{theorem: continuity of free boundary}
The map $\theta\mapsto \beta_{\epsilon}(\theta)$ is continuous.    
\end{theorem}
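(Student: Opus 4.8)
The plan is to exploit the monotonicity of the free boundary established in Lemma~\ref{lemma: monotonicity of fb wrt theta}. Since $\theta\mapsto\beta_{\epsilon}(\theta)$ is nonincreasing and real-valued, at every $\theta_{0}\in\mathbb{R}_{+}$ the one-sided limits $\beta_{\epsilon}(\theta_{0}^{-}):=\lim_{\theta\uparrow\theta_{0}}\beta_{\epsilon}(\theta)$ and $\beta_{\epsilon}(\theta_{0}^{+}):=\lim_{\theta\downarrow\theta_{0}}\beta_{\epsilon}(\theta)$ exist and are finite, are $\ge\underline{\beta}_{\epsilon}>0$ by Lemma~\ref{lemma: Robust lower bound of free boundary}, and satisfy $\beta_{\epsilon}(\theta_{0}^{+})\le\beta_{\epsilon}(\theta_{0})\le\beta_{\epsilon}(\theta_{0}^{-})$. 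Hence it suffices to prove right- and left-continuity separately, i.e.\ that both inequalities are equalities.

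For right-continuity I would take a sequence $\theta_{n}\downarrow\theta_{0}$, so that $\beta_{\epsilon}(\theta_{n})\uparrow\beta^{+}:=\beta_{\epsilon}(\theta_{0}^{+})\le\beta_{\epsilon}(\theta_{0})$. By construction (\ref{candidate solution}) one has $V^{\epsilon}_{x}(\beta_{\epsilon}(\theta_{n}),\theta_{n})=-c(\beta_{\epsilon}(\theta_{n}))$ for every $n$; letting $n\to\infty$ and using the joint continuity of $(x,\theta)\mapsto V^{\epsilon}_{x}(x,\theta)$ (Corollary~\ref{cor: joint cont of Vx}) together with the continuity of $c$ gives $V^{\epsilon}_{x}(\beta^{+},\theta_{0})=-c(\beta^{+})$. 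By the characterisation (\ref{eq: free boundary for the joint cont}) of the free boundary this forces $\beta^{+}\ge\beta_{\epsilon}(\theta_{0})$, whence $\beta^{+}=\beta_{\epsilon}(\theta_{0})$.

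The main obstacle is left-continuity, because the argument just used, applied to $\theta_{n}\uparrow\theta_{0}$ with $\beta_{\epsilon}(\theta_{n})\downarrow\beta^{-}:=\beta_{\epsilon}(\theta_{0}^{-})$, only recovers the inequality $\beta^{-}\ge\beta_{\epsilon}(\theta_{0})$ that is already known: since $V^{\epsilon}_{x}(\cdot,\theta_{0})\equiv-c(\cdot)$ on the whole of $[\beta_{\epsilon}(\theta_{0}),\infty)$, no information drawn from $V^{\epsilon}_{x}$ alone can exclude $\beta^{-}>\beta_{\epsilon}(\theta_{0})$. The extra input I would use is the rigidity carried by the eigenvalue $\lambda^{\epsilon}$. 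Recalling the identity $\lambda^{\epsilon}(\theta)=\ell^{\epsilon}(\beta_{\epsilon}(\theta),\theta)$, valid for every $\theta$ (Theorem~\ref{E&U of free boundary problem}), I would pass to the limit along $\theta_{n}\uparrow\theta_{0}$ using, on the left-hand side, the continuity of $\lambda^{\epsilon}$ (Lemma~\ref{lemma: Lip cont of Lambda wrt theta}) and, on the right-hand side, the joint continuity of $\ell^{\epsilon}$ (which follows from the joint continuity of $\pi$ granted by Assumptions~\ref{Assumptions for profit fun}-(\ref{Regularity of profit fun}) and \ref{Ass: Mean-Field Assumptions}-(\ref{ass: Lipschtiz property of pi wrt theta})), to obtain
\begin{equation*}
	\ell^{\epsilon}(\beta^{-},\theta_{0})=\lim_{n\to\infty}\ell^{\epsilon}(\beta_{\epsilon}(\theta_{n}),\theta_{n})=\lim_{n\to\infty}\lambda^{\epsilon}(\theta_{n})=\lambda^{\epsilon}(\theta_{0})=\ell^{\epsilon}(\beta_{\epsilon}(\theta_{0}),\theta_{0}).
\end{equation*}
Now $\beta^{-}\ge\beta_{\epsilon}(\theta_{0})\ge\widehat{x}_{\epsilon}(\theta_{0})$ by Proposition~\ref{properties of b(a)}-(\ref{infimum of b}), while $x\mapsto\ell^{\epsilon}(x,\theta_{0})$ is strictly decreasing on $[\widehat{x}_{\epsilon}(\theta_{0}),\infty)$ by Assumption~\ref{Assumption for l}-(\ref{Condition of lambdas der 1}); hence the displayed equality is possible only if $\beta^{-}=\beta_{\epsilon}(\theta_{0})$, which together with the right-continuity step establishes continuity of $\theta\mapsto\beta_{\epsilon}(\theta)$ on $\mathbb{R}_{+}$. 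As an alternative to the eigenvalue argument one could pass to the limit directly in the first-order ODE (\ref{Auxiliary Second order ODE}) satisfied by $V^{\epsilon}_{x}(\cdot,\theta_{n})=\phi_{\beta_{\epsilon}(\theta_{n})}(\cdot,\theta_{n})$ on $(0,\beta^{-})\subseteq(0,\beta_{\epsilon}(\theta_{n}))$, using Proposition~\ref{prop: Lip cont of phi wrt theta} for the local uniform convergence and Proposition~\ref{prop: uniform boundness of phi} for a uniform bound on a compact $\theta$-neighbourhood of $\theta_{0}$; this shows that $V^{\epsilon}_{x}(\cdot,\theta_{0})$ solves that ODE with eigenvalue $\lambda^{\epsilon}(\theta_{0})$ on $(0,\beta^{-})$, whereas on $(\beta_{\epsilon}(\theta_{0}),\beta^{-})$ it equals $-c$, which by (\ref{function for eigenvalue}) reduces to $\ell^{\epsilon}(\cdot,\theta_{0})\equiv\lambda^{\epsilon}(\theta_{0})$ there, again contradicting the strict monotonicity of $\ell^{\epsilon}(\cdot,\theta_{0})$ unless $\beta^{-}=\beta_{\epsilon}(\theta_{0})$.
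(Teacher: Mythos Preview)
Your proof is correct. The right-continuity step coincides with the paper's: both exploit the joint continuity of $(x,\theta)\mapsto V^{\epsilon}_{x}(x,\theta)$ (Corollary~\ref{cor: joint cont of Vx}) to show that the limit point $(\beta^{+},\theta_{0})$ lies in the action region, and then invoke the characterisation (\ref{eq: free boundary for the joint cont}).

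For left-continuity the two arguments diverge. The paper proceeds in the style of PDE free-boundary results (cf.\ \cite{DeAngelis_Cont_of_OS}): assuming a jump, it selects a rectangle $[x_{1},x_{2}]\times[\theta_{1},\theta_{0})$ inside the inaction region whose top edge lies in the action region, multiplies the ODE $\mathcal{L}^{\epsilon}V^{\epsilon}+\pi=\lambda^{\epsilon}$ by a nonnegative test function, integrates by parts, and passes to the limit $\theta\uparrow\theta_{0}$ to obtain $\int_{x_{1}}^{x_{2}}(\ell^{\epsilon}(\beta_{\epsilon}(\theta_{0}),\theta_{0})-\ell^{\epsilon}(x,\theta_{0}))\psi(x)\,dx=0$, which contradicts Assumption~\ref{Assumption for l}-(\ref{Condition of lambdas der 1}). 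Your route is shorter and more elementary: you pass to the limit directly in the scalar identity $\lambda^{\epsilon}(\theta)=\ell^{\epsilon}(\beta_{\epsilon}(\theta),\theta)$, using the continuity of $\lambda^{\epsilon}$ (Lemma~\ref{lemma: Lip cont of Lambda wrt theta}) and of $\ell^{\epsilon}$, and conclude by the strict monotonicity of $\ell^{\epsilon}(\cdot,\theta_{0})$ on $[\widehat{x}_{\epsilon}(\theta_{0}),\infty)$. Both arguments ultimately rest on the same structural fact, namely that $\ell^{\epsilon}(\cdot,\theta_{0})$ is injective to the right of $\widehat{x}_{\epsilon}(\theta_{0})$; your approach extracts this consequence in one line, while the paper's test-function method is more robust and would generalise to settings where such a scalar identity is unavailable. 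The alternative you sketch at the end---passing to the limit in the ODE itself---is essentially the paper's argument stripped of the test-function formalism.
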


\begin{proof}
    We split the proof into two steps.
    \vspace{0.25 cm}
    
    \textbf{Step 1:} In this step we show that the map $\theta\mapsto \beta_{\epsilon}(\theta)$ is right-continuous. Fix $\theta\in\mathbb{R}_{+}$, and a sequence $\{\theta^{n}\}_{n\in\mathbb{N}}$ such that $\theta^{n}\searrow\theta$ as $n\uparrow\infty$. Since the map $\theta\mapsto\beta_{\epsilon}(\theta)$ is nonincreasing (cf. Lemma \ref{lemma: monotonicity of fb wrt theta}) we obtain that $\beta_{\epsilon}(\theta^{n})\leq \beta_{\epsilon}(\theta)$, for any $n\in\mathbb{N}$, which implies $\beta_{\epsilon}(\theta^{+}):=\lim_{n\uparrow\infty}\beta_{\epsilon}(\theta^{n})\leq \beta_{\epsilon}(\theta)$. It remains to show that $\beta_{\epsilon}(\theta)\leq \beta_{\epsilon}(\theta^{+})$. To that end, we observe that $(\theta^{n},\beta_{\epsilon}(\theta^{n}))\in \mathcal{S}$, for any $n\in\mathbb{N}$, and $(\theta^{n},\beta_{\epsilon}(\theta^{n}))\to(\theta,\beta_{\epsilon}(\theta^{+}))$ as $n\uparrow\infty$. Thanks to continuity of $(x,\theta)\mapsto V_{x}^{\epsilon}(x,\theta)$ (cf. Corollary \ref{cor: joint cont of Vx}) we know that $\mathcal{S}$ is closed, hence $(\theta,\beta_{\epsilon}(\theta^{+}))\in\mathcal{S}$. Now, (\ref{eq: free boundary for the joint cont}) gives $\beta_{\epsilon}(\theta)\leq \beta_{\epsilon}(\theta^{+})$.
    \vspace{0.25 cm}

    \textbf{Step 2:} Now we prove that map $\theta\mapsto\beta_{\epsilon}(\theta)$ is left-continuous. In order to do this, we borrow ideas from \cite{DeAngelis_Cont_of_OS} (see also \cite{FerrariDebt}). Arguing by contradiction, we assume that there exists $\theta_{0}\in\Theta$ such that $\beta_{\epsilon}(\theta_{0})<\beta_{\epsilon}(\theta^{-}_{0})$, where $\beta_{\epsilon}(\theta^{-}_{0}):=\lim_{\delta\downarrow 0}\beta_{\epsilon}(\theta_{0}-\delta)$. The limit exists due to the monotonicity of $\theta\mapsto\beta_{\epsilon}(\theta)$ (cf. Lemma \ref{lemma: monotonicity of fb wrt theta}). Then, we can choose $x_{1},x_{2}\in\mathbb{R}_{+}$ such that $\beta_{\epsilon}(\theta_{0})<x_{1}<x_{2}<\beta_{\epsilon}(\theta^{-}_{0})$ and $\theta_{1}<\theta_{0}$. We define a rectangular domain denoted by $\mathcal{R}$ with vertices $(x_{1},\theta_{1}),(x_{1},\theta_{0}),(x_{2},\theta_{1})$ and $(x_{2},\theta_{0})$. Notice that $\mathcal{R}\subset\mathcal{C}$ and $[x_{1},x_{2}]\times \{\theta_{0}\}\subset \mathcal{S}$. From (\ref{free-boundary problem}) we know that $V^{\epsilon}$ satisfies
    \begin{equation}
        \label{eq: ODE for (x,theta)}
        \begin{cases}
                    \mathcal{L}^{\epsilon}V^{\epsilon}(x,\theta)+\pi(x,\theta)=\lambda^{\epsilon}(\theta),\quad (x,\theta)\in [x_{1},x_{2}]\times [\theta_{1},\theta_{0}), \\
                    V_{x}^{\epsilon}(x,\theta_{0})=c(x),\quad x\in [x_{1},x_{2}].
        \end{cases}
    \end{equation}
    Denote by $C_{c}^{\infty}((x_{1},x_{2}))$ the set of functions with infinitely many continuous derivatives and compact support in $(x_{1},x_{2})$. Pick arbitrary $\psi\in C_{c}^{\infty}((x_{1},x_{2}))$ such that $\psi\geq 0$ and $\int_{x_{1}}^{x_{2}}\psi(x)dx>0$, and, for $\theta\in [\theta_{1},\theta_{0})$, multiply the first equation in (\ref{eq: ODE for (x,theta)}) by $\psi$ and integrate both sides over $(x_{1},x_{2})$. This gives
    \begin{equation}
        \int_{x_{1}}^{x_{2}}\big(\mathcal{L}^{\epsilon}V^{\epsilon}(x,\theta)+\pi(x,\theta))\psi(x)dx=\int_{x_{1}}^{x_{2}}\lambda^{\epsilon}(\theta)\psi(x)dx.
    \end{equation}
    Rearranging terms and using integration by parts on the left-hand side we obtain
    \begin{align}
        -\int_{x_{1}}^{x_{2}}\bigg(\frac{1}{2}\sigma^{2}(x)\psi(x)\bigg)_{x}V^{\epsilon}_{x}(x,\theta)dx&=\int_{x_{1}}^{x_{2}}\big(\lambda^{\epsilon}(\theta)-b(x)V_{x}^{\epsilon}(x,\theta)\notag \\
        -\pi(x,\theta)&+\frac{\epsilon}{2}\sigma^{2}(x)(V^{\epsilon}_{x})^{2}(x,\theta)\big)\psi(x)dx.
    \end{align}
    From Proposition \ref{prop: Lip cont of phi wrt theta} we know that the map $\theta\mapsto V_{x}^{\epsilon}(x,\theta),\; x\in\mathbb{R}_{+}$, is continuous. Hence, taking limits as $\theta\uparrow \theta_{0}$, by invoking the dominated convergence theorem, we obtain
    \begin{align}
        -\int_{x_{1}}^{x_{2}}\bigg(\frac{1}{2}\sigma^{2}(x)\psi(x)\bigg)_{x}V^{\epsilon}_{x}(x,\theta_{0})dx&=\int_{x_{1}}^{x_{2}}\big(\lambda^{\epsilon}(\theta_{0})-b(x)V_{x}^{\epsilon}(x,\theta_{0})\notag \\
        -\pi(x,\theta_{0})&+\frac{\epsilon}{2}\sigma^{2}(x)(V^{\epsilon}_{x})^{2}(x,\theta_{0})\big)\psi(x)dx.
    \end{align}
    Since now $V^{\epsilon}_{x}(x,\theta_{0})=c(x),\; x\in [x_{1},x_{2}]$, recalling (\ref{function for eigenvalue}) and that $\lambda^{\epsilon}(\theta)=\ell^{\epsilon}(\beta_{\epsilon}(\theta),\theta)$, applying again integration by parts on the left-hand side and rearranging the terms, we obtain
    \begin{equation}
        \label{eq: integral of eigenvalues}
        \int_{x_{1}}^{x_{2}}\big(\ell^{\epsilon}(\beta_{\epsilon}(\theta_{0}),\theta_{0})-\ell^{\epsilon}(x,\theta_{0})\big)\psi(x)dx=0.
    \end{equation}
    However, the left-hand side of (\ref{eq: integral of eigenvalues}) is strictly negative by Assumption \ref{Assumption for l}-(\ref{Condition of lambdas der 1}). Hence, we have a contradiction.
  \end{proof}

\subsection{Existence and Uniqueness of the Ergodic MFG Equilibrium}

\begin{proposition}
    \label{Prop: existence of stationary distribution}
    For $\epsilon\geq 0$, the following hold:
    \begin{enumerate}
        \item \label{existence of stat dist} For any $\theta\in\mathbb{R}_{+}$, there exists a stationary distribution of $(X^{\xi^{*}(\theta)}_{t})_{t\geq 0}$ under $\mathbb{Q}^{*}(\theta)$, denoted by $\nu^{\theta,\epsilon}\in\mathcal{P}(\mathbb{R}_{+},\mathcal{B}(\mathbb{R}_{+}))$, and its density, denoted by $m^{\theta,\epsilon}$, is such that
    \begin{equation}
        m^{\theta,\epsilon}(x)=\frac{2}{\nu^{\theta,\epsilon}((0,\beta_{\epsilon}(\theta)])\sigma^{2}(x)}\exp{\bigg(-\int_{x}^{\beta_{\epsilon}(\theta)}\frac{2b(x)}{\sigma^{2}(y)}dy+2\epsilon\int_{x}^{\beta_{\epsilon}(\theta)}V_{x}^{\epsilon}(y,\theta)dy \bigg)}\boldsymbol{1}_{(0,\beta_{\epsilon}(\theta)]}(x),
    \end{equation}
    where 
    $$\nu^{\theta,\epsilon}((0,\beta_{\epsilon}(\theta)]):=\int_{0}^{\beta_{\epsilon}(\theta)}\frac{2}{\sigma^{2}(x)}\exp{\bigg(-\int_{x}^{\beta_{\epsilon}(\theta)}\frac{2b(x)}{\sigma^{2}(y)}dy+2\epsilon\int_{x}^{\beta_{\epsilon}(\theta)}V_{x}^{\epsilon}(y,\theta)dy \bigg)}dx$$
    \item \label{continuity of stat dist wrt theta} The map $\theta\mapsto \nu^{\theta,\epsilon}$ is continuous.
    \end{enumerate}
\end{proposition}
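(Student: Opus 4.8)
The plan is to combine the pathwise description of the optimally controlled state obtained in Proposition \ref{Prop: Optimal policy} with the classical ergodic theory of one-dimensional diffusions, and then to pass to the limit in $\theta$ in the resulting explicit density by dominated convergence. For \emph{Part (1)}, recall from Proposition \ref{Prop: Optimal policy} and Theorem \ref{theorem: Verification result} that, under $\mathbb{Q}^{*}(\theta)$, the process $X^{\xi^{*}(\theta)}$ is a regular diffusion on $(0,\beta_{\epsilon}(\theta)]$ with generator $\tfrac12\sigma^{2}(x)\partial_{xx}+\big(b(x)-\epsilon\sigma^{2}(x)V^{\epsilon}_{x}(x,\theta)\big)\partial_{x}$, instantaneously reflected at $\beta_{\epsilon}(\theta)$, with $0$ not attainable (Assumption \ref{Ass: Non explosion} and Proposition \ref{Prop: Optimal policy}). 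For such a process the candidate invariant density is, up to normalization, the density of the speed measure built from the shifted drift $b-\epsilon\sigma^{2}V^{\epsilon}_{x}$, i.e.\ proportional to $\sigma^{-2}(x)\exp\!\big(\int_{\beta_{\epsilon}(\theta)}^{x}\tfrac{2(b(y)-\epsilon\sigma^{2}(y)V^{\epsilon}_{x}(y,\theta))}{\sigma^{2}(y)}\,dy\big)$; expanding the exponent gives exactly $m^{\theta,\epsilon}$ as in the statement. It then remains to check that this measure has finite mass: since $V^{\epsilon}_{x}(\cdot,\theta)=\phi_{\beta_{\epsilon}(\theta)}(\cdot,\theta)$ is bounded on $(0,\beta_{\epsilon}(\theta)]$ by Proposition \ref{prop: uniform boundness of phi}, the factor $\exp\!\big(2\epsilon\int_{x}^{\beta_{\epsilon}(\theta)}V^{\epsilon}_{x}(y,\theta)\,dy\big)$ is bounded above and below by strictly positive constants, so integrability of $m^{\theta,\epsilon}$ reduces to finiteness of $\lim_{x_{0}\downarrow0}m^{\mathbb{P}}((x_{0},\beta_{\epsilon}(\theta)))$, which holds by Assumption \ref{Ass: Non explosion}, while near the reflecting endpoint there is nothing to check as $\sigma$ is continuous and strictly positive there. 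That the normalized speed measure is then the (unique) stationary distribution of a one-dimensional diffusion with an instantaneously reflecting boundary and a non-attainable one is classical (see Section II.1 in \cite{BorodinSalminen2012}); equivalently, one checks directly that $m^{\theta,\epsilon}$ solves the stationary Fokker--Planck equation on $(0,\beta_{\epsilon}(\theta))$ together with the zero-flux condition at $\beta_{\epsilon}(\theta)$.

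For \emph{Part (2)}, fix $\theta\in\mathbb{R}_{+}$ and a sequence $\theta_{n}\to\theta$; it suffices to show $\nu^{\theta_{n},\epsilon}\rightharpoonup\nu^{\theta,\epsilon}$. Write $g^{\theta}$ for the unnormalized density, so that $m^{\theta,\epsilon}=g^{\theta}/\mathcal{N}(\theta)$ with $\mathcal{N}(\theta):=\int_{0}^{\beta_{\epsilon}(\theta)}g^{\theta}(x)\,dx\in(0,\infty)$ (finite by Part (1), and positive since $\beta_{\epsilon}(\theta)\ge\underline{\beta}_{\epsilon}>0$ by Lemma \ref{lemma: Robust lower bound of free boundary}). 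By Theorem \ref{theorem: continuity of free boundary} we have $\beta_{\epsilon}(\theta_{n})\to\beta_{\epsilon}(\theta)$, and by Corollary \ref{cor: joint cont of Vx} we have $V^{\epsilon}_{x}(\cdot,\theta_{n})\to V^{\epsilon}_{x}(\cdot,\theta)$ pointwise; splitting the inner integrals at $\beta_{\epsilon}(\theta)$ and using dominated convergence, this yields $g^{\theta_{n}}(x)\to g^{\theta}(x)$ for every $x\neq\beta_{\epsilon}(\theta)$, hence Lebesgue-a.e. The crux is to exhibit a $dx$-integrable dominating function uniform in $n$. For $n$ large, $\beta_{\epsilon}(\theta_{n})$ lies in a fixed compact set $[\underline{\beta}_{\epsilon},\bar\beta]$, and inspecting the proof of Proposition \ref{prop: uniform boundness of phi} together with the continuity of $\lambda^{\epsilon}$ (Lemma \ref{lemma: Lip cont of Lambda wrt theta}), of $\pi$, of $\theta\mapsto\beta_{\epsilon}(\theta)$, and of $\beta\mapsto m^{\mathbb{P}}((0,\beta))$, the constant $M(\theta_{n})$ there is bounded uniformly in $n$ by some $\bar M<\infty$, so $\sup_{n}\|V^{\epsilon}_{x}(\cdot,\theta_{n})\|_{\infty}\le\bar M$. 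Writing $\exp\!\big(-\int_{x}^{\beta_{\epsilon}(\theta_{n})}\tfrac{2b(y)}{\sigma^{2}(y)}\,dy\big)=C(\beta_{\epsilon}(\theta_{n}))\big/S^{\mathbb{P}}_{x}(x)$ with $C(\cdot)$ bounded on $[\underline{\beta}_{\epsilon},\bar\beta]$, one obtains $g^{\theta_{n}}(x)\le K\,\tfrac{2}{\sigma^{2}(x)S^{\mathbb{P}}_{x}(x)}\boldsymbol{1}_{(0,\bar\beta]}(x)$ for a constant $K$ independent of $n$, and the right-hand side is integrable by Assumption \ref{Ass: Non explosion}. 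Dominated convergence then gives $\mathcal{N}(\theta_{n})\to\mathcal{N}(\theta)$ and $\int h\,g^{\theta_{n}}\,dx\to\int h\,g^{\theta}\,dx$ for every bounded continuous $h$, whence $\int h\,d\nu^{\theta_{n},\epsilon}\to\int h\,d\nu^{\theta,\epsilon}$, which is the claimed weak continuity (Scheff\'e's lemma in fact upgrades this to total-variation convergence of the densities).

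The delicate point I expect is the uniform dominating function in \emph{Part (2)}: the pointwise-in-$\theta$ bound of Proposition \ref{prop: uniform boundness of phi} must be promoted to a locally uniform one, which forces us to track that every quantity entering $M(\theta)$ — namely $\lambda^{\epsilon}(\theta)$, $\beta_{\epsilon}(\theta)$, $\pi(\beta_{\epsilon}(\theta),\theta)$ and $m^{\mathbb{P}}((0,\beta_{\epsilon}(\theta)))$ — is continuous, hence locally bounded, and to control the scale-function factor $S^{\mathbb{P}}_{x}$ near the non-attainable boundary $0$ uniformly through Assumption \ref{Ass: Non explosion}. Everything else is routine: Part (1) is essentially a speed-measure computation plus an integrability check, and the a.e.\ pointwise convergence in Part (2) is immediate from the continuity results already established.
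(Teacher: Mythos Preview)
Your proposal is correct and follows essentially the same approach as the paper. For Part~(1) both you and the paper identify the process under $\mathbb{Q}^{*}(\theta)$ as a reflected diffusion with drift $b-\epsilon\sigma^{2}V^{\epsilon}_{x}$, write down the speed-measure density, and verify finiteness of its total mass via the bound of Proposition~\ref{prop: uniform boundness of phi} combined with Assumption~\ref{Ass: Non explosion}, before invoking the ergodic theory in \cite{BorodinSalminen2012}. For Part~(2) the paper's proof is a single sentence --- ``Claim~\ref{continuity of stat dist wrt theta} follows from Proposition~\ref{prop: Lip cont of phi wrt theta} and Theorem~\ref{theorem: continuity of free boundary}'' --- whereas you spell out the dominated-convergence argument that makes this rigorous, in particular the promotion of the pointwise bound $M(\theta)$ to a locally uniform one; this is exactly the content the paper leaves implicit, and your identification of it as the ``delicate point'' is accurate.
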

\begin{proof}
    Let $x\in\mathbb{R}_{+}$ and $\theta\in\mathbb{R}_{+}$. From Proposition \ref{Prop: Optimal policy}, we know that under $\mathbb{Q}^{*}(\theta)\in\widehat{\mathcal{Q}}(x)$, with $\frac{d\mathbb{Q}^{*}(\theta)}{d\mathbb{P}}\big|_{\mathcal{F}_{t}}=-\epsilon\sigma(X^{\xi^{*}(\theta)}_{t})V^{\epsilon}_{x}(X^{\xi^{*}(\theta)}_{t},\theta)\;\mathbb{P}$-a.s. (cf. Theorem \ref{theorem: Verification result}), the process $(X^{\xi^{*}(\theta)}_{t})_{t\geq 0}$ evolves as 
    \begin{equation}
        \label{eq: optimal diffusion}
        dX^{\xi^{*}(\theta)}_{t}=\big(b(X^{\xi^{*}(\theta)}_{t})-\epsilon\sigma^{2}(X^{\xi^{*}(\theta)}_{t})V^{\epsilon}_{x}(X^{\xi^{*}(\theta)}_{t},\theta)\big)dt+\sigma(X^{\xi^{*}(\theta)}_{t})dW^{\mathbb{Q}^{*}(\theta)}_{t}-d\xi^{*}_{t}(\theta),
    \end{equation}
    with $X^{\xi^{*}(\theta)}_{0}=x$, and $(X^{\xi^{*}(\theta)},\xi^{*}(\theta))$ solving $\textbf{SP}(x,\beta_{\epsilon}(\theta);\mathbb{Q}^{*}(\theta),-\epsilon\sigma(\cdot) V^{\epsilon}_{x}(\cdot,\theta))$ (cf.\ Definition \ref{def Skorokhod}). From Proposition \ref{prop: uniform boundness of phi}, there exists $M:=M(\theta)>0$ such that $|V_{x}^{\epsilon}(x,\theta)|\leq M(\theta)$ for any $x\in [0,\infty)$ (see (\ref{candidate solution})), so that 
    \begin{align}
        \nu^{\theta,\epsilon}((0,\beta_{\epsilon}(\theta)])&=\int_{0}^{\beta_{\epsilon}(\theta)}\frac{2}{\sigma^{2}(x)}\exp\bigg(-\int_{x}^{\beta_{\epsilon}(\theta)}\frac{2b(x)}{\sigma^{2}(y)}dy+2\epsilon\int_{x}^{\beta_{\epsilon}(\theta)}V^{\epsilon}_{x}(y,\theta)dy\bigg)dx \notag \\
        &\leq \int_{0}^{\beta_{\epsilon}(\theta)}\frac{2}{\sigma^{2}(x)}\exp\bigg(-\int_{x}^{\beta_{\epsilon}(\theta)}\frac{2b(x)}{\sigma^{2}(y)}dy+2\epsilon M(\theta)\bigg)dx \notag \\
        &=:\exp(2\epsilon M_{\epsilon}(\theta)\beta_{\epsilon}(\theta))m^{\mathbb{P}}((0,\beta_{\epsilon}(\theta)])<\infty,
    \end{align}
    by Assumption \ref{Ass: Non explosion}.

    Then, from Section 36 of Chapter II in \cite{BorodinSalminen2012} the process $(X^{\xi^{*}(\theta)}_{t})_{t\geq 0}$ is ergodic and has invariant measure $\nu^{\theta,\epsilon}\in\mathcal{P}(\mathbb{R}_{+},\mathcal{B}(\mathbb{R}_{+}))$ with
    \begin{equation}
        \label{eq: stationary distribution}
        \nu^{\theta,\epsilon}((0,x))=\int_{0}^{x}m^{\theta,\epsilon}(y)dy,\quad x\in(0,\beta_{\epsilon}(\theta)].
    \end{equation}
    Claim \ref{continuity of stat dist wrt theta} follows from Proposition \ref{prop: Lip cont of phi wrt theta} and Theorem \ref{theorem: continuity of free boundary}.
\end{proof}
We are now in the position to prove the main result of this section. To this end, recall Assumption \ref{Ass: Regularity of Mean-Field term} we introduce the operator $\mathcal{T}:\mathbb{R}_{+}\to\mathbb{R}_{+}$, as
\begin{equation}
    \label{eq: Mean-Field Operator}
    \mathcal{T}\theta:=F\bigg(\int_{\mathbb{R}_{+}}f(x)\nu^{\theta,\epsilon}(dx)\bigg)=F(\langle f, \nu^{\theta,\epsilon} \rangle),
\end{equation}
where $\langle f,\nu^{\theta,\epsilon}\rangle:= \int_{\mathbb{R}_{+}}f(x)\nu^{\theta,\epsilon}(dx)$. Thanks to the previous results, we can now prove the
existence and uniqueness of a stationary mean-field equilibrium as in Definition \ref{Def Mean-Field equilibrium}.
\begin{theorem}
    \label{theorem: existence and uniqueness of MFE}
    Let Assumptions \ref{Assumption for dynamics of SDE}, \ref{Ass: Non explosion}, \ref{Assumptions for profit fun}, \ref{Ass: Regularity of Mean-Field term}, \ref{Assumption for l} and \ref{Ass: Mean-Field Assumptions} hold. For any $\epsilon\geq0$, there exists a unique $\theta^{\epsilon}\in\mathbb{R}_{+}$ such that $\theta^{\epsilon}=\mathcal{T}\theta^{\epsilon}$.
\end{theorem}

\begin{proof}
    Let $\epsilon\geq 0$. We divide the proof into three steps.
    
    \vspace{0.25 cm}
    \textbf{Step 1: Set of relevant $\theta$.} Let $x\in\mathbb{R}_{+}$ and $\theta\in\mathbb{R}_{+}$. We start by obtaining a lower bound for $\mathcal{T}\theta$, which is uniform with respect to $\theta$. For $\underline{\beta}_{\epsilon}\in \mathbb{R}_{+}$ as in Lemma \ref{lemma: Robust lower bound of free boundary}, and arguing as in Proposition \ref{E&U of first order ODE}, we can find $\underline{V}^{\epsilon}\in C^{2}(\mathbb{R}_{+})$ to be the unique classical solution to the problem
    \begin{equation}
        \label{eq: lower free boundary problem for MFE}
        \begin{cases}
            \frac{1}{2}\sigma^{2}(x)\underline{V}^{\epsilon}_{xx}(x)+b(x)\underline{V}^{\epsilon}_{x}(x)-\frac{\epsilon}{2}\sigma^{2}(x)\big(\underline{V}^{\epsilon}_{x}\big)^{2}(x)=\underline{\lambda}^{\epsilon},\quad x<\underline{\beta}_{\epsilon}, \\
            \underline{V}^{\epsilon}_{x}(x)=c(x),\quad x\geq \underline{\beta}_{\epsilon}.
        \end{cases}
    \end{equation}    
     Recall $\mathbb{Q}^{*}(\theta)$ (cf. Theorem \ref{theorem: Verification result}). Then, Proposition \ref{Prop: Optimal policy} allows to construct an $\mathbb{F}$-adapted pair $(Y^{\zeta},\zeta)\in \mathcal{D}[0,\infty)\times\mathcal{A}$ such that 
    \begin{equation}
        \label{eq: lower diffusion process}
        dY^{\zeta}_{t}=(b(Y^{\zeta}_{t})-\epsilon\sigma^{2}(Y^{\zeta}_{t})\underline{V}^{\epsilon}_{x}(Y^{\zeta}_{t}))dt+\sigma(Y^{\zeta}_{t})dW^{\mathbb{Q}^{*}(\theta)}_{t}-d\zeta_{t},\quad Y^{\zeta}_{0}=x,
    \end{equation}
    and $(Y^{\zeta},\zeta)$ solves $\textbf{SP}(x,\underline{\beta}_{\epsilon};\mathbb{Q}^{*}(\theta),-\epsilon\sigma\underline{V}^{\epsilon}_{x})$. Notice that, while $Y^{\zeta}$ is independent of $\theta$, its expectation under $\mathbb{Q}^{*}(\theta)$ is not. However, this is easily fixed. We define a new complete probability space $(\widehat{\Omega},\widehat{\mathcal{F}},\widehat{\mathbb{Q}})$ supporting a Brownian motion $(\widehat{W}_{t})_{t\geq 0}$, let $(\widehat{\mathcal{F}}^{o}_{t})_{t\geq 0}$ be the filtration generated by Brownian motion $\widehat{W}$, and denote by $\widehat{\mathbb{F}}:=(\widehat{\mathcal{F}}_{t})_{t\geq 0}$ its augmentation with the $\widehat{\mathbb{Q}}$-null sets. Hence, we introduce
    \begin{equation}
        \label{eq: new set of control under new prob space}
        \widehat{\mathcal{A}}:=\{(\widehat{\xi}_{t})_{t\geq 0},\;\widehat{\mathbb{F}}\text{-adapted, nondecreasing, leftcontinuous and such that }\widehat{\xi}_{0}=0,\;\widehat{\mathbb{Q}}\text{-a.s.}\}.
    \end{equation}
    Thanks to Lemma 5.5 in \cite{DeAngelisMilazzo}, since $\zeta\in\mathcal{A}$ we can find $\widehat{\zeta}\in\widehat{\mathcal{A}}$ that is $(\widehat{\mathcal{F}}^{o}_{t})_{t\geq0}$-predictable and such that $\text{Law}_{\mathbb{Q}^{*}(\theta)}(W^{\mathbb{Q}^{*}(\theta)},\zeta)=\text{Law}_{\widehat{\mathbb{Q}}}(\widehat{W},\widehat{\zeta})$. Therefore, from Lemma 5.6 in \cite{DeAngelisMilazzo} we have that
    \begin{equation}
        \label{eq: equality of the laws}
        \text{Law}_{\mathbb{Q}^{*}(\theta)}(W^{\mathbb{Q}^{*}(\theta)},Y^{\zeta},\zeta)=\text{Law}_{\widehat{\mathbb{Q}}}(\widehat{W},\widehat{Y}^{\widehat{\zeta}},\widehat{\zeta}),
    \end{equation}
    where $(\widehat{Y}^{\widehat{\zeta}})$ is the unique strong solution on $(\widehat{\Omega},\widehat{\mathcal{F}},\widehat{\mathbb{Q}},\widehat{\mathbb{F}})$ to
    \begin{equation}
        \label{eq: Y in new prob space}
        d\widehat{Y}^{\widehat{\zeta}}_{t}=\big(b(\widehat{Y}^{\widehat{\zeta}}_{t})-\epsilon\sigma^{2}(\widehat{Y}^{\widehat{\zeta}}_{t})\underline{V}^{\epsilon}_{x}(\widehat{Y}^{\widehat{\zeta}}_{t})\big)dt+\sigma(\widehat{Y}^{\widehat{\zeta}}_{t})d\widehat{W}_{t}-d\widehat{\zeta}_{t},\quad \widehat{Y}^{\widehat{\zeta}}_{0}=x,
    \end{equation}
    subject to $(\widehat{Y}^{\widehat{\zeta}},\widehat{\zeta})$ uniquely solving $\textbf{SP}(x,\underline{\beta}_{\epsilon};\widehat{\mathbb{Q}},-\epsilon\sigma\underline{V}^{\epsilon}_{x})$.
    Hence, we have that
    \begin{equation}
        \label{eq:equality of expecation values for different prob spaces}
        \mathbb{E}^{\widehat{\mathbb{Q}}}_{x}\big[ f(\widehat{Y}^{\widehat{\zeta}}_{t}) \big]=\mathbb{E}^{\mathbb{Q}^{*}(\theta)}_{x}\big[ f(Y_{t}^{\zeta}) \big],\text{ for any }t\geq 0.
    \end{equation}
    Furthermore, arguing as in Proposition \ref{Prop: existence of stationary distribution}, we can show that $(\widehat{Y}^{\widehat{\zeta}}_{t})_{t\geq 0}$ admits a unique stationary distribution denoted by $\underline{\nu}^{\epsilon}\in\mathcal{P}(\mathbb{R}_{+},\mathcal{B}(\mathbb{R}_{+}))$. Thus, by ergodicity of $\widehat{Y}^{\widehat{\zeta}}$ (see Section 36 of Chapter II in \cite{BorodinSalminen2012}), we obtain that
    \begin{equation}
        \langle f,\underline{\nu}^{\epsilon}\rangle=\lim_{T\uparrow\infty}\frac{1}{T}\int_{0}^{T}\mathbb{E}^{\widehat{\mathbb{Q}}}_{x}\big[ f(\widehat{Y}^{\widehat{\zeta}}_{t}) \big]dt,
    \end{equation}
    and we define
    \begin{equation}
        \label{eq: lower mean-field bound}
        \underline{\theta}_{1}^{\epsilon}:=F\big(\langle f,\underline{\nu}^{\epsilon}\rangle\big)\quad\text{ and }\quad\overline{\theta}_{1}^{\epsilon}:=F\big(f(\beta_{\epsilon}(\underline{\theta}^{\epsilon}_{1}))\big).
    \end{equation}
    It is then clear that
    \begin{equation}
        \label{eq: upper mean-field bound}
        \underline{\theta}_{1}^{\epsilon}:=F\big(\langle f,\underline{\nu}^{\epsilon}\rangle\big)\leq F\big(f(\underline{\beta}_{\epsilon})\big)\leq F\big(f(\beta_{\epsilon}(\underline{\theta}^{\epsilon}_{1})\big)=\overline{\theta}^{\epsilon}_{1},
    \end{equation}
    where the first inequality follows from Assumption \ref{Ass: Regularity of Mean-Field term}-(\ref{Ass: Regularity of Mean-Field term-1}) and the second inequality is due to Lemma \ref{lemma: Robust lower bound of free boundary}.

    For $\theta\in [\underline{\theta}_{1}^{\epsilon},\overline{\theta}_{1}^{\epsilon}]$, we know that $\beta_{\epsilon}(\overline{\theta}_{1}^{\epsilon})\leq \beta_{\epsilon}(\theta)\leq \beta_{\epsilon}(\underline{\theta}_{1}^{\epsilon})$ (cf. Lemma \ref{lemma: monotonicity of fb wrt theta}). Furthermore, it holds
    \begin{equation}
        V^{\epsilon}_{x}(x,\theta)=\phi_{\beta_{\epsilon}(\theta)}(x,\theta)\leq \phi_{\beta_{\epsilon}(\underline{\theta}_{1}^{\epsilon})}(x,\theta)\leq \phi_{\beta_{\epsilon}(\underline{\theta}_{1}^{\epsilon})}(x,\overline{\theta}_{1}^{\epsilon})=:\widehat{V}^{\epsilon}_{x}(x),\quad x\in (0,\beta_{\epsilon}(\theta)],
    \end{equation}
    where the first inequality follows from Proposition \ref{prop: comparison princ wrt free-boundary} and the second follows from Proposition \ref{Proposition: Comparison Principle wrt theta}. Additionally, $\widehat{V}^{\epsilon}_{x}$ is the unique solution to (\ref{Auxiliary Second order ODE}) for $\beta=\beta_{\epsilon}(\underline{\theta}_{1}^{\epsilon})$, $\theta=\overline{\theta}_{1}^{\epsilon}$ and $\gamma=0$. We introduce the $\mathbb{F}$-adapted process $(Z_{t})_{t\geq 0}$ with dynamics
    \begin{equation}
        \label{eq: upper diffusion procees}
        dZ_{t}=(b(Z_{t})-\epsilon\sigma^{2}(Z_{t})\widehat{V}^{\epsilon}_{x}(Z_{t}))dt+\sigma(Z_{t})dW_{t}^{\mathbb{Q}^{*}(\theta)},\quad Z_{0}=x.
    \end{equation}
    Thanks to the regularity of $\widehat{V}^{\epsilon}$ (cf. Proposition \ref{E&U of first order ODE}), equation (\ref{eq: upper diffusion procees}) admits a unique strong solution. Arguing as in Proposition \ref{Prop: Optimal policy}, we can find a pair $(Z^{\xi(\underline{\theta}^{\epsilon}_{1})},\xi(\underline{\theta}^{\epsilon}_{1}))\in \mathcal{D}[0,\infty)\times\mathcal{A}$ that uniquely solves $\textbf{SP}(x,\beta_{\epsilon}(\overline{\theta}_{1}^{\epsilon});\mathbb{Q}^{*}(\theta),-\epsilon\sigma\widehat{V}^{\epsilon}_{x})$. Consequently, given that $f$ is increasing, Proposition \ref{Prop: Comparison Principle for reflected/unreflected SDEs} gives us
    \begin{equation}
        \label{eq: inequality of expected values for Z}
        \mathbb{E}^{\mathbb{Q}^{*}(\theta)}_{x}\big[f(Z^{\xi(\underline{\theta}^{\epsilon}_{1})})\big]\leq \mathbb{E}^{\mathbb{Q}^{*}(\theta)}_{x}\big[f(X^{\xi^{*}(\theta)}_{t}) \big],
    \end{equation}
    where $(X^{\xi^{*}(\theta)}_{t})_{t\geq 0}$ is the strong solution to
    \begin{equation}
        \label{eq: optimal diffusion on the new prob space}
        dX_{t}^{\xi^{*}(\theta)}=(b(X_{t}^{\xi^{*}(\theta)})-\epsilon\sigma^{2}(X_{t}^{\xi^{*}(\theta)})V^{\epsilon}_{x}(X_{t}^{\xi^{*}(\theta)},\theta))dt+\sigma(X_{t}^{\xi^{*}(\theta)})dW_{t}^{\mathbb{Q}^{*}(\theta)}-d\xi^{*}_{t}(\theta),
    \end{equation}
    with $\xi^{*}(\theta)$ as in (\ref{eq: Optimal Policy}) (cf. Proposition \ref{Prop: Optimal policy}). Furthermore, using again Lemma 5.5 and Lemma 5.6 from \cite{DeAngelisMilazzo} we can find $(\widehat{Z}^{\widehat{\xi}(\underline{\theta}^{\epsilon}_{1})},\widehat{\xi}(\underline{\theta}^{\epsilon}_{1}))\in\mathcal{D}[0,\infty)\times\widehat{\mathcal{A}}$ such that $(\widehat{Z}^{\widehat{\xi}(\underline{\theta}^{\epsilon}_{1})},\widehat{\xi}(\underline{\theta}^{\epsilon}_{1}))$ solves $\textbf{SP}(x,\beta_{\epsilon}(\overline{\theta}_{1}^{\epsilon});\widehat{\mathbb{Q}},-\epsilon\sigma\widehat{V}^{\epsilon}_{x})$, where $\widehat{Z}^{\widehat{\xi}(\underline{\theta}^{\epsilon}_{1})}$ is the unique strong solution to
    \begin{equation}
        d\widehat{Z}^{\widehat{\xi}(\underline{\theta}^{\epsilon}_{1})}_{t}=(b(\widehat{Z}^{\widehat{\xi}(\underline{\theta}^{\epsilon}_{1})}_{t})-\epsilon\sigma^{2}(\widehat{Z}^{\widehat{\xi}(\underline{\theta}^{\epsilon}_{1})}_{t})\widehat{V}^{\epsilon}_{x}(\widehat{Z}^{\widehat{\xi}(\underline{\theta}^{\epsilon}_{1})}_{t}))dt+\sigma(\widehat{Z}^{\widehat{\xi}(\underline{\theta}^{\epsilon}_{1})}_{t})dW_{t}^{\widehat{\mathbb{Q}}}-d\widehat{\xi}_{t}(\underline{\theta}^{\epsilon}_{1})
    \end{equation}
    and
    \begin{equation}
        \label{eq: equation of expected values for Z}
        \mathbb{E}^{\widehat{\mathbb{Q}}}_{x}\big[ f(\widehat{Z}^{\widehat{\xi}(\underline{\theta}^{\epsilon}_{1})}_{t}) \big]=\mathbb{E}^{\mathbb{Q}^{*}(\theta)}_{x}\big[ f(Z^{\xi(\underline{\theta}^{\epsilon}_{1})}_{t}) \big],\text{ for any }t\geq 0.
    \end{equation} We claim that $(\widehat{Z}_{t}^{\widehat{\xi}(\underline{\theta}^{\epsilon}_{1})})_{t\geq 0}$ admits a stationary distribution under $\widehat{\mathbb{Q}}$. Indeed, arguing as in the proof of Proposition \ref{Prop: existence of stationary distribution}, we can show that there exists unique $\widehat{\nu}^{\epsilon}\in\mathcal{P}(\mathbb{R}_{+},\mathcal{B}(\mathbb{R}_{+}))$. Hence, from ergodicity of $\widehat{Z}^{\widehat{\xi}(\underline{\theta}^{\epsilon}_{1})}$, (\ref{eq: inequality of expected values for Z}) and (\ref{eq: equation of expected values for Z}) we have
    \begin{equation}
        \langle f,\widehat{\nu}^{\epsilon}\rangle=\lim_{T\uparrow\infty}\frac{1}{T}\int_{0}^{T}\mathbb{E}^{\widehat{\mathbb{Q}}}_{x}\big[ f(\widehat{Z}_{t}^{\widehat{\xi}(\underline{\theta}^{\epsilon}_{1})})\big]dt\leq \lim_{T\uparrow\infty}\frac{1}{T}\int_{0}^{T}\mathbb{E}^{\mathbb{Q}^{*}(\theta)}_{x}\big[f(X_{t}^{\xi^{*}(\theta)}) \big]dt=\langle f,\nu^{\theta,\epsilon}\rangle,
    \end{equation}
    which in turn, by monotonicity of $F$, leads to
    \begin{equation}
        \label{eq: lower bound of theta}
        \underline{\theta}^{\epsilon}:=F(\langle f,\widehat{\nu}^{\epsilon}\rangle)\leq \mathcal{T}\theta.
    \end{equation}
    To find an upper bound for $\mathcal{T}\theta$, is easier and we proceed as follows. Since $X_{t}^{\xi^{*}(\theta)}\in (0,\beta_{\epsilon}(\theta)],\;\mathbb{Q}^{*}_{x}(\theta)$-a.s. and thanks to the monotonicity of $f$ and $F$ (see Assumption \ref{Ass: Regularity of Mean-Field term}-(\ref{Ass: Regularity of Mean-Field term-1}))), we obtain
    \begin{equation}
        \label{eq: upper bound of theta}
        \mathcal{T}\theta=F(\langle f,\nu^{\theta,\epsilon}\rangle)\leq F(f(\beta_{\epsilon}(\theta)) \leq F(f(\beta_{\epsilon}(\underline{\theta}^{\epsilon})))=:\overline{\theta}^{\epsilon},
    \end{equation}
    for any $\theta\geq \underline{\theta}^{\epsilon}$. In the last inequality we have used the fact that the map $\theta\mapsto\beta_{\epsilon}(\theta)$ is nonincreasing (cf. Lemma \ref{lemma: monotonicity of fb wrt theta}). Thus, combining (\ref{eq: lower bound of theta}) and (\ref{eq: upper bound of theta}) we conclude that any potential fixed point of $\mathcal{T}$ must lie in the convex, compact set
    \begin{equation}
        \label{eq: set of relevant theta}
        K^{\epsilon}:=[\underline{\theta}^{\epsilon},\overline{\theta}^{\epsilon}]\subset \mathbb{R}_{+}.
    \end{equation}

    \textbf{Step 2: Continuity of $\mathcal{T}$.} We define the map $\mathcal{T}_{1}:K^{\epsilon}\to \mathcal{P}(\mathbb{R}_{+},\mathcal{B}(\mathbb{R}_{+}))$ by
    \begin{equation}
        \label{eq: first mean-field map}
        \mathcal{T}_{1}\theta=\nu^{\theta,\epsilon},\quad \theta\in K^{\epsilon}.
    \end{equation}
    The map is well-defined and continuous thanks to Proposition \ref{Prop: existence of stationary distribution}-(\ref{continuity of stat dist wrt theta}). Next, we denote by $\mathcal{T}_{2}:\mathcal{P}(\mathbb{R}_{+},\mathcal{B}(\mathbb{R}_{+}))\to K^{\epsilon}$ the map
    \begin{equation}
        \label{eq: second mean-field map}
        \mathcal{T}_{2}\nu:=F\bigg( \int_{\mathbb{R}_{+}}f(x)\nu(dx) \bigg).
    \end{equation}
    Since the functions $f$ and $F$ are continuous and the probability measures have compact support, the map $\mathcal{T}_{2}$ is clearly continuous. Concluding, the map $\mathcal{T}:=\mathcal{T}_{2}\circ\mathcal{T}_{1}:K^{\epsilon}\to K^{\epsilon}$ is continuous in the convex compact set $K^{\epsilon}$ and, by Schauder-Tychonof fixed-point theorem (Corollary 17.56 in \cite{aliprantis1999infinite}) there exists $\theta^{\epsilon}\in K^{\epsilon}$ such that $\mathcal{T}\theta^{\epsilon}=\theta^{\epsilon}$.

    \textbf{Step 3: Uniqueness.} Let $\theta^{\epsilon}\in K^{\epsilon}$ be the fixed-point of $\mathcal{T}$ and let $\tilde{\theta}^{\epsilon}\in K^{\epsilon}$ be another fixed-point of $\mathcal{T}$ such that $\theta^{\epsilon}\neq \tilde{\theta}^{\epsilon}$. Without loss of generality, we assume that $\theta^{\epsilon}>\tilde{\theta}^{\epsilon}$. Then, by monotonicity of $\theta\mapsto \beta_{\epsilon}(\theta)$ (cf. Lemma \ref{lemma: monotonicity of fb wrt theta}), we have that $\beta_{\epsilon}(\theta^{\epsilon})\leq \beta_{\epsilon}(\tilde{\theta}^{\epsilon})$, and mimicking Step 1 we obtain that
    \begin{equation}
        \theta^{\epsilon}=\mathcal{T}\theta^{\epsilon}\leq \mathcal{T}\tilde{\theta}^{\epsilon}=\tilde{\theta}^{\epsilon},
    \end{equation}
    which leads to a contradiction.
\end{proof}

\section{A Case Study: A Stylized Dirty-Capacity Reduction Model}
\label{Section: Case study}

This section numerically illustrates our previous findings through a stylized mean-field game of irreversible reductions of emission-intensive productive capacity under Knightian uncertainty.\ The purpose of the example is to provide a tractable benchmark satisfying the assumptions of the paper and to study how the equilibrium reacts to changes in the ambiguity level.

We interpret $X^{0}$ as a firm-level index of dirty productive capacity, residual fossil-based exposure, or emission-intensive activity in the absence of active reductions. Its dynamics are given by
\begin{equation}
    \label{eq: uncontrolled SDE example}
    dX^{0}_{t}=(\kappa- \alpha X^{0}_{t})dt+\sigma X^{0}_{t}dW_{t}^{\mathbb{P}},
    \quad X^{0}_{0}=x,
\end{equation}
where $\kappa>0$ captures the baseline tendency of this capacity index to recover or be maintained, $\alpha>0$ is the mean-reversion speed, and $\sigma>0$ measures exogenous fluctuations in operating conditions. Notice that, for an arbitrary $x_{0}>0$, we have
\begin{equation}
    \label{eq: unattainability of example test}
    S^{\mathbb{P}}_{x}(x)=
    \exp\bigg(
    -\int_{x_{0}}^{x}\frac{2(\kappa-\alpha y)}{\sigma^{2}y^{2}}dy
    \bigg)
    \sim
    \exp\bigg(
    \frac{2\kappa}{\sigma^{2}x}
    +\frac{2\alpha}{\sigma^{2}}\ln x
    \bigg),
    \qquad x\downarrow 0,
\end{equation}
which readily implies that Assumption \ref{Ass: Non explosion} is satisfied.

The singular control $\xi\in \mathcal{A}_{e}(x)$ represents cumulative irreversible reductions of dirty capacity, for instance through retirement, decommissioning, or costly adaptation to environmental constraints. The representative firm receives a constant marginal reward $c>0$ from such reductions. This may be interpreted in reduced form as a retirement value, a subsidy, avoided compliance costs, or a salvage benefit.

On the revenue side, we use the reduced-form isoelastic inverse-demand specification
\[
    p(\theta)=\theta^{-(1+\delta)}, \qquad \delta\in(0,1),
\]
where $\theta$ denotes an aggregate dirty-capacity index. The instantaneous profit is therefore
\[
    \pi(x,\theta)=x^{\delta}p(\theta)
    =x^{\delta}\theta^{-(1+\delta)}.
\]
This specification captures the negative effect of aggregate dirty capacity on individual profitability in a tractable way. At equilibrium, the aggregate index satisfies
\begin{equation}
    \theta=
    \bigg(\int_{\mathbb{R}_{+}}x^{\delta}\nu_{\infty}(dx)\bigg)^{1/\delta},
\end{equation}
where $\nu_{\infty}$ is the stationary distribution of the optimally controlled state.

Knightian uncertainty concerns the firm's imperfect knowledge of the law governing the future evolution of this dirty-capacity index. In reduced form, this may reflect ambiguity about future regulation, carbon prices, operating conditions, demand conditions, input costs, technological obsolescence, or the effective depreciation of emission-intensive capacity. We study how such ambiguity affects the optimal reduction boundary, the stationary distribution, and the corresponding mean-field equilibrium.

From Remarks \ref{remark: benchmark SDE} and \ref{remark: benchmark example} we obtain that Assumptions \ref{Assumption for dynamics of SDE}, \ref{Ass: Non explosion}, \ref{Assumptions for profit fun}, \ref{Ass: Regularity of Mean-Field term}, \ref{Assumption for l} and \ref{Ass: Mean-Field Assumptions} are satisfied. Hence, there exists a unique equilibrium of the ergodic MFG $(\beta_{\epsilon}(\theta^{\epsilon}),\theta^{\epsilon},\mathbb{Q}^{*}(\theta^{\epsilon}))$ which satisfies Definition \ref{Def Mean-Field equilibrium}. Moreover, there exists $\phi_{\beta_{\epsilon}(\theta^{\epsilon})}(\cdot,\theta^{\epsilon})\in C^{1}(\mathbb{R}_{+})$ such that
\begin{align}
    \label{eq: TVP problem case study}
        \frac{1}{2}\sigma^{2}x^{2}(\phi_{\beta_{\epsilon}(\theta^{\epsilon})})_{x}(x,\theta^{\epsilon})
        &+(\kappa-\alpha x)\phi_{\beta_{\epsilon}(\theta^{\epsilon})}(x,\theta^{\epsilon})
        -\frac{\epsilon}{2}\sigma^{2}x^{2}(\phi_{\beta_{\epsilon}(\theta^{\epsilon})})^{2}(x,\theta^{\epsilon})  \\
        &=\lambda^{\epsilon}(\theta^{\epsilon})
        -x^{\delta}(\theta^{\epsilon})^{-(1+\delta)},
        \quad x<\beta_{\epsilon} (\theta^{\epsilon}), \notag \\
        \phi_{\beta_{\epsilon}(\theta^{\epsilon})}(x,\theta^{\epsilon})
        &=c,\quad x\geq \beta_{\epsilon}(\theta^{\epsilon}), \notag
\end{align}
where
\begin{equation}
    \lambda^{\epsilon}(\theta^{\epsilon})
    =
    c\big(\kappa-\alpha\beta_{\epsilon}(\theta^{\epsilon})\big)
    +(\beta_{\epsilon}(\theta^{\epsilon}))^{\delta}
    (\theta^{\epsilon})^{-(1+\delta)}
    -\frac{\epsilon}{2}\sigma^{2}c^{2}(\beta_{\epsilon}(\theta^{\epsilon}))^{2},
\end{equation}
see \eqref{optimal eigenvalue} in Theorem \ref{theorem: Verification result}. In the numerical specification used below, where $\kappa=\alpha=1$, this becomes
\[
    \lambda^{\epsilon}(\theta^{\epsilon})
    =
    c\big(1-\beta_{\epsilon}(\theta^{\epsilon})\big)
    +(\beta_{\epsilon}(\theta^{\epsilon}))^{\delta}
    (\theta^{\epsilon})^{-(1+\delta)}
    -\frac{\epsilon}{2}\sigma^{2}c^{2}(\beta_{\epsilon}(\theta^{\epsilon}))^{2}.
\]

Given that the equilibrium cannot be determined explicitly, we introduce a policy iteration algorithm for its evaluation. The policy iteration method introduced by Bellman \cite{bellman1957dynamic} is a numerical method for solving Hamilton-Jacobi-Bellman equations. Recently, it has been generalized to the case of mean-field games; see Cacace et al. \cite{CacaceCamilliGoffi} and Camilli and Tang \cite{CAMILLI2022126138}. Our algorithm is inspired by \cite{dianetti2024exploratory} and is described in the following table.

\begin{algorithm}
\caption{Policy Iteration Algorithm}
\label{algo: PIA II}
\KwIn{$\theta^{(0)}=\underline{\theta}^{\epsilon}$.}
\For{$n = 0$ \KwTo $N-1$}{
    \KwIn{$\beta^{(0,n)}=\widehat{\underline{x}}_{\epsilon}(\theta^{(n)})$. Find $\phi^{(0,n)}(x,\theta^{(n)})\in C^{1}(\mathbb{R}_{+})$ solution to (\ref{Auxiliary Second order ODE}) for $\beta=\beta^{(0,n)}$ and $\gamma=0$.}
    \For{$k=0$ \KwTo $N-1$}{Calculate
    \begin{equation}
        \beta^{*}=\max\{x\in (\widehat{x}_{\epsilon}(\theta^{(n)}),\beta^{(k,n)}):\phi^{(k)}(x,\theta^{(n)})=c\}.
    \end{equation}
    Find a $\phi^{*}(\cdot,\theta^{(n)})\in C^{1}(\mathbb{R}_{+})$ solution to the following equations:
    \begin{align} 
        \label{algo: eq: solution given beta*}
        &\frac{1}{2}\sigma^{2}(x)(\phi^{*})_{x}(x,\theta^{(n)})+b(x)\phi^{*}(x,\theta^{(n)})-\frac{\epsilon}{2}\sigma^{2}(x)(\phi^{*})^{2}(x,\theta^{(n)})\\
        &\quad\quad\quad=\ell^{\epsilon}(\beta^{*},\theta^{(n)})-\pi(x,\theta^{(n)}),\quad x\leq\beta^{*}, \notag\\
        &\quad\quad\quad\phi^{*}(x,\theta^{(n)})=c,\quad x\geq\beta^{*}.
    \end{align}
    Update policy as follows
    \begin{equation}
        \label{algo: eq: update the new free-boundary}
        \beta^{(k+1,n)}=\begin{cases}
            \beta^{*},\text{ if }\phi^{*}(x,\theta^{(n)})\geq c,\;\text{for any }x\in (0,\beta^{*}], \\
            \beta^{(k,n)},\text{ otherwise.}
        \end{cases}
    \end{equation}}
    Update mean-field equilibrium as follows
    \begin{equation}
        \label{algo: eq: update new theta}
    	\theta^{(n+1)}=\begin{cases}
    	    F\big(\int_{\mathbb{R}_{+}}f(x)\nu^{\beta^{(N,n)}}(dx;\theta^{(n)})\big)=:\theta^{*},\quad\text{if }\theta^{*}\in [\underline{\theta}^{\epsilon},\overline{\theta}^{\epsilon}], \\
            \theta^{(n)},\quad\text{otherwise.}
            \end{cases}
    \end{equation}}
\Return{$(\beta^{(N,N)},\theta^{(N)})$.}
\end{algorithm}

\subsection{Sensitivity Analysis}
In this section, we employ the policy iteration algorithm introduced above to numerically explore the sensitivity of the mean-field equilibrium with respect to the ambiguity parameter $\epsilon$ and the volatility parameter $\sigma$. More precisely, we study how these parameters affect the equilibrium reduction boundary $\beta_{\epsilon}(\theta^{\epsilon})$ and the equilibrium aggregate index $\theta^{\epsilon}$. We then examine the stationary distribution of the optimally controlled dirty-capacity index for different levels of ambiguity. Unless otherwise stated, the baseline parameters are
\[
    \kappa=\alpha=\sigma_{\textnormal{default}}=1,
    \qquad c=1,\qquad \delta=0.6,\qquad \epsilon_{\textnormal{default}}=1.
\]
With the normalization $\kappa=\alpha=1$, the deterministic uncontrolled dynamics revert to the level $\kappa/\alpha=1$. These parameter values are chosen for numerical illustration and are not meant as a calibration exercise.

\subsubsection{Level of Ambiguity $\epsilon$} 
\label{subsection: sensitivity wrt ambiguity}

The ambiguity parameter $\epsilon$ determines the extent to which the representative firm accounts for deviations from the reference model $\mathbb{P}$. Equivalently, it governs the impact of the worst-case probability measure $\mathbb{Q}^{*}$ in the robust control problem. As illustrated in Figure \ref{fig: free-boundary vs. epsilon}, the reflection boundary decreases as the level of ambiguity increases. Thus, under stronger ambiguity, the firm reduces dirty capacity earlier and maintains the controlled state at a lower level. Economically, this reflects a precautionary response to uncertainty about the future drift of the dirty-capacity index, which may capture regulatory pressure, operating conditions, input costs, technological obsolescence, or transition-policy shocks.\ This behavior is consistent with related results for singular stochastic control problems under model uncertainty; see, for instance, Theorem 5.1 in \cite{CohenHening}.

Moreover, Figure \ref{fig: equilibrium price vs. espilon} shows that the equilibrium aggregate index $\theta^{\epsilon}$ decreases as ambiguity increases. This is consistent with the decrease of the reflection boundary: stronger ambiguity induces more aggressive reductions of dirty capacity, which shifts the stationary distribution toward lower values and therefore lowers the aggregate index. As $\epsilon \downarrow 0$, the numerical results are consistent with convergence toward the risk-neutral benchmark. A rigorous analysis of the continuity of the mean-field equilibrium with respect to the ambiguity parameter is left for future research; see Section \ref{sec:outlook}.

\begin{figure}
     \centering
     \begin{subfigure}[b]{0.4\textwidth}
         \centering
         \includegraphics[width=\textwidth]{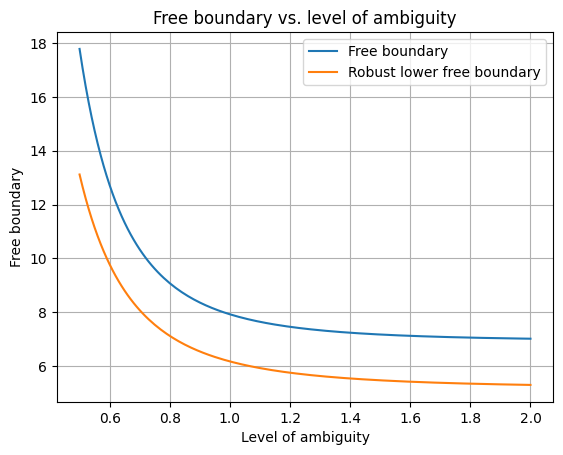}
         \caption{$\beta_{\epsilon}(\theta^{\epsilon})$ vs. $\epsilon$.}
         \label{fig: free-boundary vs. epsilon}
     \end{subfigure}
     \begin{subfigure}[b]{0.4\textwidth}
         \centering
         \includegraphics[width=\textwidth]{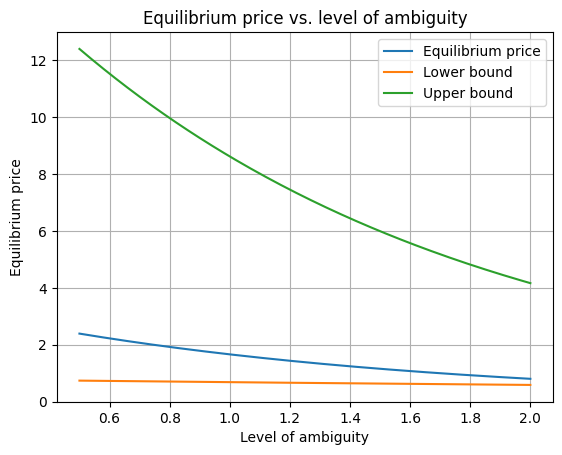}
         \caption{$\theta^{\epsilon}$ vs. $\epsilon$.}
         \label{fig: equilibrium price vs. espilon}
     \end{subfigure}
        \caption{Comparative statics of equilibrium w.r.t. level of ambiguity $\epsilon$.}
        \label{comparative statics wrt epsilon}
\end{figure}

\subsubsection{Level of Volatility $\sigma$}

The volatility parameter $\sigma$ measures the intensity of exogenous fluctuations affecting the dirty-capacity index under the reference model $\mathbb{P}$. As illustrated in Figure \ref{free boundary vs volatility}, an increase in volatility lowers the equilibrium reflection boundary. Thus, higher volatility leads firms to reduce dirty capacity earlier and to maintain the controlled state at a lower level. This reflects a precautionary response to greater uncertainty in the future evolution of emission-intensive activity.

Figure \ref{price equilibrium vs volatility} shows that the equilibrium aggregate index $\theta^{\epsilon}$ also decreases as volatility increases. This is consistent with the behavior of the reflection boundary: stronger fluctuations induce more aggressive reductions of dirty capacity, shifting the stationary distribution toward lower values and reducing the aggregate dirty-capacity index.
\begin{figure}
     \centering
     \begin{subfigure}[b]{0.4\textwidth}
         \centering
         \includegraphics[width=\textwidth]{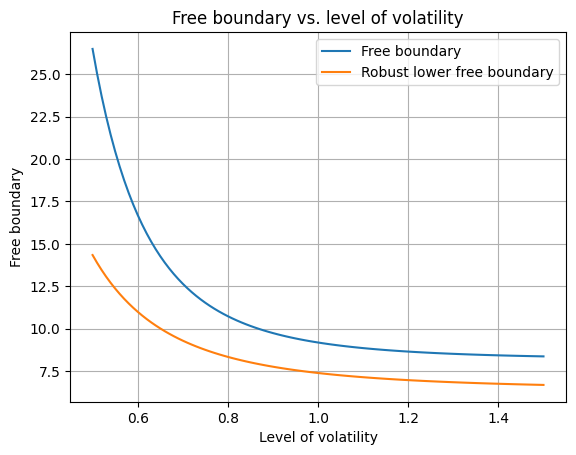}
         \caption{$\beta_{\epsilon}(\theta^{\epsilon})$ vs. $\sigma$.}
         \label{free boundary vs volatility}
     \end{subfigure}
     \begin{subfigure}[b]{0.4\textwidth}
         \centering
         \includegraphics[width=\textwidth]{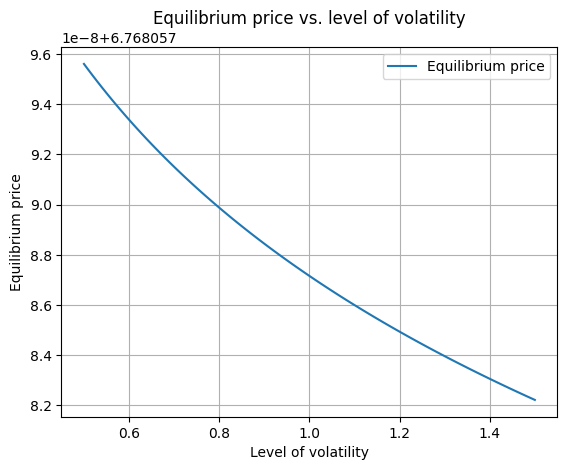}
         \caption{$\theta^{\epsilon}$ vs. $\sigma$.}
         \label{price equilibrium vs volatility}
     \end{subfigure}
        \caption{Comparative statics of equilibria w.r.t. level of volatility $\sigma$.}
        \label{comparative statics wrt volatility}
\end{figure}

\subsubsection{Effect of Ambiguity on the Stationary Distribution}

In continuation of Subsection \ref{subsection: sensitivity wrt ambiguity}, we examine how the stationary distribution of the optimally controlled state changes with the ambiguity parameter. As shown in Figure \ref{stat distributions}, higher levels of ambiguity shift the stationary distribution toward lower values of the dirty-capacity index. This is consistent with the decrease of the reflection boundary documented above: under stronger ambiguity, firms reduce dirty capacity earlier, and the long-run distribution places more mass on lower levels of emission-intensive activity.

\begin{figure}
    \centering
    \begin{subfigure}[b]{0.4\textwidth}
        \includegraphics[width=\textwidth]{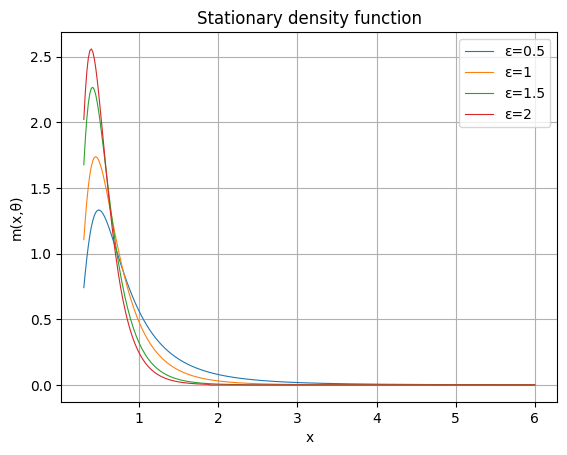}
    \caption{Stationary density function for different levels of ambiguity.}
    \label{stat distributions}
    \end{subfigure}
    \begin{subfigure}[b]{0.5\textwidth}
        \includegraphics[width=\textwidth]{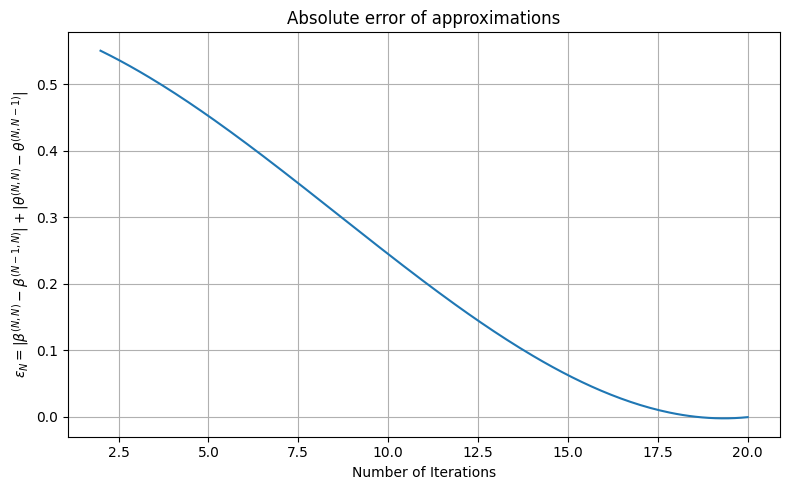}
    \caption{Absolute error of the algorithm for different number of iterations.}
    \label{absolute error}
    \end{subfigure}
\end{figure}

\section{Outlook}
\label{sec:outlook}

In this work, we have studied stationary mean-field games of singular control under model uncertainty, in which a representative agent exerts instantaneous downward control on an It\^{o} diffusion. The agent evaluates a long-run average reward under a worst-case probability measure, while the interaction with the population enters through a scalar functional of the stationary distribution. Under suitable assumptions, we solve the representative agent's robust ergodic singular control problem, characterize the optimal policy through a reflection boundary, construct the stationary distribution under the worst-case measure, and prove existence and uniqueness of a stationary mean-field equilibrium.

The numerical section illustrates the theory through a stylized dirty-capacity reduction benchmark. This example is not intended as a calibrated structural model of the energy sector, but rather as a tractable environment in which the assumptions of the paper can be verified and the equilibrium can be computed. The sensitivity analysis shows how ambiguity and volatility affect the equilibrium reduction boundary, the stationary distribution of the controlled state, and the aggregate dirty-capacity index.

Several natural extensions remain open. A first natural question arises regarding the behavior of the mean-field equilibrium as the level of ambiguity vanishes (the risk-neutral case) or tends to infinity (the degenerate case). More specifically, one might seek to establish the continuity of the mean-field equilibrium with respect to the ambiguity level. This could potentially be achieved by extending existing results for the single-agent problem (cf.\ Theorem 5.1 in \cite{CohenHening}) to our framework and demonstrating that the mean-field operator is locally Lipschitz continuous. The latter requires proving that the free boundary is locally Lipschitz continuous with respect to the mean-field term. However, this step demands analytical techniques distinct from existing methods (see, for instance, \cite{Campi_et.al}). Consequently, we leave this extension for future research.
    
Furthermore, in our framework, we assume from the outset that the mean-field interaction occurs through the stationary distribution. By adapting arguments from \cite{CaoFerrariDianetti}, \cite{ChristensenMordeckiOliu}, and \cite{dianetti2023ergodic}, one can demonstrate two distinct limiting results. The first concerns the connection to a mean-field game with time-dependent interactions. Relying on the ergodicity of the optimally controlled diffusion in the worst-case scenario and Assumption \ref{Ass: Mean-Field Assumptions}-(\ref{ass: Lipschtiz property of pi wrt theta}), one can show that the objective functional with a time-dependent mean-field interaction coincides with that of the stationary distribution. The second limiting result relates to an associated $N$-player game with time-dependent interactions. It is important to note that establishing this connection requires a highly careful analysis; in the worst-case scenario, the state dynamics depend directly on the mean-field interaction—a feature that is absent in the aforementioned works. Consequently, this extension falls beyond the scope of the present paper and is left for future work.

\subsection*{Acknowledgments}
We thank the Associate Editor and the two anonymous reviewers for their valuable comments, which significantly improved the quality of this paper. We also thank Luciano Campi and Asaf Cohen for fruitful discussions and helpful suggestions.\ Moreover, the authors gratefully acknowledge financial support from Deutsche Forschungsgemeinschaft (DFG, German Research Foundation) – Project-ID 317210226 – SFB 1283.


\appendix

\section{Some Technical Results}
\label{section: appendix}
The proof of the next lemma is straightforward.
\begin{lemma}
    \label{Elementary lemma}
   Let $f\in C^{1}((\alpha,\beta))$. Fix $x\in (\alpha,\beta)$ and $\overline{y}:=\sup\{y\in (\alpha,x): f(y)=f(x)\}$ and $\underline{y}:=\inf\{y\in (x,\beta): f(y)=f(x)\}$, if they exist. If $f'(x)> 0$ (resp., $<0$), then $f'(\overline{y})\leq 0$ (resp., $\geq 0$) and $f'(\underline{y})\leq 0$ (resp., $\geq 0$).
\end{lemma}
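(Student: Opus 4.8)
The plan is to establish the statement for the case $f'(x)>0$ and then deduce the case $f'(x)<0$ by applying the $f'(x)>0$ result to $-f$, which has exactly the same level sets while reversing the sign of the derivative. First I would record the elementary consequence of differentiability at $x$ with $f'(x)>0$: there is a $\delta>0$ with $(x-\delta,x+\delta)\subset(\alpha,\beta)$ such that $f(y)<f(x)$ for $y\in(x-\delta,x)$ and $f(y)>f(x)$ for $y\in(x,x+\delta)$. In particular, no point of $\{y\in(\alpha,x):f(y)=f(x)\}$ lies in $(x-\delta,x)$, so whenever $\overline{y}$ exists it satisfies $\alpha<\overline{y}\le x-\delta<x$, and symmetrically $x<x+\delta\le\underline{y}<\beta$ when $\underline{y}$ exists; moreover, since $f$ is continuous the level set $\{z\in(\alpha,\beta):f(z)=f(x)\}$ is closed, hence $\overline{y}$ and $\underline{y}$ belong to it, i.e. $f(\overline{y})=f(\underline{y})=f(x)$.

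The crucial step for $\overline{y}$ is the observation that $f<f(x)$ on the whole open interval $(\overline{y},x)$: by the very definition of $\overline{y}$ as a supremum, $f$ never takes the value $f(x)$ on $(\overline{y},x)$, and since $f<f(x)$ near the right endpoint of this connected interval, the intermediate value theorem forbids $f>f(x)$ anywhere on it. Consequently the one-sided difference quotients $\big(f(y)-f(\overline{y})\big)/(y-\overline{y})$ are strictly negative for all $y\in(\overline{y},x)$, and letting $y\downarrow\overline{y}$ yields $f'(\overline{y})\le 0$. The argument for $\underline{y}$ is the mirror image: $f$ never equals $f(x)$ on $(x,\underline{y})$ and $f>f(x)$ just to the right of $x$, so $f>f(x)=f(\underline{y})$ throughout $(x,\underline{y})$, whence $\big(f(y)-f(\underline{y})\big)/(y-\underline{y})<0$ for $y\in(x,\underline{y})$ and letting $y\uparrow\underline{y}$ gives $f'(\underline{y})\le 0$.

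I do not expect any genuine obstacle here; the only points requiring a little care are the two uses of continuity, namely that $f(\overline{y})=f(\underline{y})=f(x)$ (closedness of the level set) and that $f$ does not cross the value $f(x)$ on the relevant open intervals (intermediate value theorem). Once these are in place, the conclusion follows immediately from the signs of the one-sided difference quotients, so the lemma is indeed straightforward.
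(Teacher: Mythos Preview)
Your argument is correct: the separation $\overline{y}<x<\underline{y}$ via the sign of $f'(x)$, the closedness of the level set to get $f(\overline{y})=f(\underline{y})=f(x)$, the intermediate value theorem to pin down the sign of $f-f(x)$ on $(\overline{y},x)$ and $(x,\underline{y})$, and the one-sided difference quotient limits are all sound. The paper does not give a proof of this lemma, simply declaring it straightforward, so your write-up in fact supplies the details the paper omits.
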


Recall $\phi_{\cdot}(x,\theta)$ from Section \ref{section: Existence of a solution to First ODE}. We then have the following Lemmata from \cite{CohenHening} (cf. Lemmata 4, 6 and 8).

\begin{lemma}
    \label{lemma: lemma 5 from Cohen}
    For any $\beta>\widehat{x}_{\epsilon}(\theta)$, there exists $y_{1}\in (0,\widehat{x}_{\epsilon}(\theta))$, such that $\phi_{\beta}(x,\theta)\geq c(x),\; x\in (y_{1},\beta]$.
    \end{lemma}

\begin{lemma}
    \label{prop: comparison princ wrt free-boundary}
    Recall $\widehat{x}_{\epsilon}(\theta)$ as in Assumption \ref{Assumption for l}-(\ref{Condition of lambdas der 1}) and let $\widehat{x}^{\epsilon}(\theta)\leq \alpha<\eta$. For  any $x\in (0,\infty)$, one has $\phi_{\alpha}(x,\theta)\leq \phi_{\eta}(x,\theta)$.
\end{lemma}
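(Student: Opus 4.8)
The plan is to re-run the perturbation-and-contradiction scheme of the proof of Proposition \ref{properties of b(a)}, now comparing two solutions of (\ref{Auxiliary Second order ODE}) with different boundary points. Fix $\theta\in\mathbb{R}_{+}$ and $\widehat{x}_{\epsilon}(\theta)\le\alpha<\eta$, and abbreviate $\phi_{\alpha}:=\phi_{\alpha}(\cdot,\theta)$, $\phi_{\eta}:=\phi_{\eta}(\cdot,\theta)$. For $x\geq\eta$ one has trivially $\phi_{\alpha}(x)=\phi_{\eta}(x)=-c(x)$, so it suffices to prove $\phi_{\alpha}\le\phi_{\eta}$ on $(0,\eta)$. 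For $\gamma<0$ small let $\phi_{\eta}^{\gamma}:=\phi_{\eta}^{\gamma}(\cdot,\theta)$ be the regular solution of (\ref{Auxiliary Second order ODE}) with boundary point $\eta$ and perturbation $\gamma$ (Proposition \ref{E&U of first order ODE}), and set $g^{\gamma}:=\phi_{\eta}^{\gamma}-\phi_{\alpha}$ on $(0,\eta]$.

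The key preliminary observation is that $\phi_{\alpha}$ itself solves an equation of the form in the first line of (\ref{Auxiliary Second order ODE}) on the whole of $(0,\eta)$: on $(0,\alpha)$ with the constant source $\ell^{\epsilon}(\alpha,\theta)-\pi(x,\theta)$ by construction, and on $[\alpha,\eta)$, where $\phi_{\alpha}\equiv-c$, with the source $\ell^{\epsilon}(x,\theta)-\pi(x,\theta)$ — this is precisely the defining identity (\ref{function for eigenvalue}). Writing the corresponding ``target'' as $T_{\alpha}(x):=\ell^{\epsilon}(\alpha,\theta)$ for $x\le\alpha$ and $T_{\alpha}(x):=\ell^{\epsilon}(x,\theta)$ for $\alpha\le x<\eta$, the hypothesis $\alpha\ge\widehat{x}_{\epsilon}(\theta)$ together with the strict monotonicity of $x\mapsto\ell^{\epsilon}(x,\theta)$ on $[\widehat{x}_{\epsilon}(\theta),\infty)$ (Assumption \ref{Assumption for l}-(\ref{Condition of lambdas der 1})) gives $T_{\alpha}(x)>\ell^{\epsilon}(\eta,\theta)$ for every $x\in(0,\eta)$.

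Next I would pin down the sign of $g^{\gamma}$ near $\eta$. Plugging $x=\eta$ into (\ref{Auxiliary Second order ODE}) and using $\phi_{\eta}^{\gamma}(\eta)=-c(\eta)$ and (\ref{function for eigenvalue}), exactly as in the proof of Proposition \ref{properties of b(a)}, yields $\frac{1}{2}\sigma^{2}(\eta)\,\partial_{x}\bigl(\phi_{\eta}^{\gamma}+c\bigr)(\eta)=\gamma$, so $g^{\gamma}(\eta)=0$ and $(g^{\gamma})_{x}(\eta^{-})=2\gamma/\sigma^{2}(\eta)<0$; hence $g^{\gamma}>0$ on a left-neighbourhood of $\eta$. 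Arguing by contradiction, if $g^{\gamma}$ fails to be positive on all of $(0,\eta)$, set $x_{0}:=\sup\{x\in(0,\eta):g^{\gamma}(x)\le 0\}$; then $x_{0}\in(0,\eta)$, $g^{\gamma}(x_{0})=0$ and $g^{\gamma}>0$ on $(x_{0},\eta)$. Subtracting the Riccati equation solved by $\phi_{\alpha}$ from the one solved by $\phi_{\eta}^{\gamma}$ and evaluating at $x_{0}$, the linear term $b\,g^{\gamma}$ and the quadratic term $-\frac{\epsilon}{2}\sigma^{2}(\phi_{\eta}^{\gamma}+\phi_{\alpha})\,g^{\gamma}$ both vanish since $g^{\gamma}(x_{0})=0$, leaving $\frac{1}{2}\sigma^{2}(x_{0})(g^{\gamma})_{x}(x_{0})=\ell^{\epsilon}(\eta,\theta)-T_{\alpha}(x_{0})+\gamma<0$ by the previous step. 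But $x_{0}$ is a right-minimiser of $g^{\gamma}$ (equivalently, apply Lemma \ref{Elementary lemma} at $\eta$ with preceding level-zero point $x_{0}$), so $(g^{\gamma})_{x}(x_{0})\ge 0$ — a contradiction. Therefore $g^{\gamma}>0$ on $(0,\eta)$, i.e.\ $\phi_{\eta}^{\gamma}>\phi_{\alpha}$ there.

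Finally, letting $\gamma\uparrow 0$ and invoking Lemma \ref{Pertrubed solutions lemma} (so that $\phi_{\eta}^{\gamma}\to\phi_{\eta}$ uniformly on $(0,\eta]$) gives $\phi_{\eta}\ge\phi_{\alpha}$ on $(0,\eta)$; combined with the equality on $[\eta,\infty)$ noted at the outset, this proves the claim. I expect the only genuinely delicate point to be the bookkeeping in the contradiction step — verifying that $\phi_{\alpha}$ really does solve a Riccati equation across the junction $x=\alpha$ and that the associated target stays strictly above $\ell^{\epsilon}(\eta,\theta)$ throughout $(0,\eta)$, since this is exactly what forces the sign of $(g^{\gamma})_{x}(x_{0})$, and it is where the hypothesis $\alpha\ge\widehat{x}_{\epsilon}(\theta)$ is used.
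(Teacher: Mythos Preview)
Your proof is correct. The paper does not provide an independent argument for this lemma but simply cites Lemma~6 of \cite{CohenHening}; your approach --- perturb $\phi_{\eta}$ by $\gamma<0$, observe that $\phi_{\alpha}$ solves a Riccati equation on all of $(0,\eta)$ with source $T_{\alpha}(x)-\pi(x,\theta)$ where $T_{\alpha}(x)>\ell^{\epsilon}(\eta,\theta)$ by Assumption~\ref{Assumption for l}-(\ref{Condition of lambdas der 1}), and then run the contradiction via Lemma~\ref{Elementary lemma} --- is exactly the template used in \cite{CohenHening} and reproduced throughout this paper (Proposition~\ref{properties of b(a)}, Proposition~\ref{Proposition: Comparison Principle wrt theta}, Lemma~\ref{lemma: Robust lower bound of free boundary}). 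The only point worth making explicit is that $\phi_{\alpha}$ is genuinely $C^{1}$ across the junction $x=\alpha$ (this follows from plugging $x=\alpha$ into (\ref{Auxiliary Second order ODE}) with $\gamma=0$ and comparing with (\ref{function for eigenvalue}), which forces $(\phi_{\alpha})_{x}(\alpha^{-})=-c_{x}(\alpha)$), so the subtraction at a hypothetical zero $x_{0}=\alpha$ is legitimate; you flag this yourself, and it holds.
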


\begin{lemma}
    \label{lemma: lemma 8 from Cohen}
    For any $\beta\in (\widehat{x}_{\epsilon}(\theta),\beta_{\epsilon}(\theta))$, there exists $y_{2}\in (0,\widehat{x}_{\epsilon}(\theta))$ such that $\phi_{\beta}(x,\theta)\leq c(x)$, for any $x\in (0,y_{2}]$.
\end{lemma}

\begin{proposition}
    \label{prop: uniform boundness of phi}
    For any $\theta\in\mathbb{R}_{+}$ and $\beta\in\mathbb{R}_{+}$ such that $\ell^{\epsilon}(\beta,\theta)\geq 0$, one has
    \begin{equation}
        \int_{x}^{\beta}\phi_{\beta}(y,\theta)dy\leq\begin{cases}
            \pi(\beta,\theta)m^{\mathbb{P}}((0,\beta))\beta=:M(\beta,\theta),\quad x\in (0,\beta) \\
            c(\beta)(x-\beta),\quad x\in [\beta,\infty).
        \end{cases}
    \end{equation}
    Furthermore,
    \begin{equation}
        \lim_{x\downarrow 0}\int_{x}^{\beta}\phi_{\beta}(y,\theta)dy=-\infty.
    \end{equation}
\end{proposition}
\begin{proof}
    \label{Proof of Proposition of uniform boundedness}
    Let arbitrarily fixed $\theta\in\mathbb{R}_{+}$ and $\beta\in \mathbb{R}_{+}$ such that $\ell^{\epsilon}(\beta,\theta)\geq 0$. Note that such a $\beta\in \mathbb{R}_{+}$ exists due to Assumptions~\ref{Assumption for l}-(\ref{Condition of lambdas der 1}) and \ref{Assumption for l}-(\ref{limit assumption}). In the sequel, we denote for simplicity $\phi_{\beta}(\cdot,\theta)$ by $\phi(\cdot,\theta)$. Given that $\phi_{\beta}(x,\theta)=c(x)$ for $x\in [\beta,\infty)$ and $c$ is bounded, it suffices to consider $x\in (0,\beta]$. We then have on $(0,\beta)$
	\begin{align}
		\label{eq: rearranged ODE}
		\frac{d}{dx}&\bigg(\phi(x,\theta)\exp\bigg(-\int_{x}^{\beta}\frac{2b(y)}{\sigma^{2}(y)}dy\bigg)\bigg) \\
        &=\bigg(\phi_{x}(x,\theta)+\frac{2b(x)}{\sigma^{2}(x)}\phi(x,\theta) \bigg)\exp\bigg(-\int_{x}^{\beta}\frac{2b(y)}{\sigma^{2}(y)}dy\bigg) \notag \\
        &=\bigg(\frac{2}{\sigma^{2}(x)}\ell^{\epsilon}(\beta,\theta)-\frac{2\pi(x,\theta)}{\sigma^{2}(x)}+\epsilon\phi^{2}(x,\theta)\bigg)\exp\bigg(-\int_{x}^{\beta}\frac{2b(y)}{\sigma^{2}(y)}dy\bigg) \notag,
	\end{align}
    where in second equation we have used (\ref{Auxiliary Second order ODE}) for $\gamma=0$.
	Then, for $x<\beta$, we integrate (\ref{eq: rearranged ODE}) from $x$ to $\beta$ and exploiting monotonicity of $\pi$ (cf. Assumption \ref{Assumptions for profit fun}-(\ref{Concativity})) and nonnegativity of $\ell^{\epsilon}(\beta,\theta)$ we obtain
    \begin{align}
        \label{eq: rearranged ODE 2}
        \int_{x}^{\beta}\frac{d}{dy}\bigg(\phi(y,\theta)&\exp\bigg(-\int_{y}^{\beta}\frac{2b(z)}{\sigma^{2}(z)}dz\bigg)\bigg)dy\geq \notag \\
        &\quad-\pi(\beta,\theta)\int_{x}^{\beta}\frac{2}{\sigma^{2}(y)} \exp\bigg(-\int_{y}^{\beta}\frac{2b(z)}{\sigma^{2}(z)}dz\bigg)dx.
    \end{align}
    By applying condition $\phi(\beta,\theta)=c(\beta)$, we get
	\begin{align}
		\label{eq: rearranged ODE 3}
		c(\beta)-\phi(x,\theta)&\exp\bigg(-\int_{x}^{\beta}\frac{2b(y)}{\sigma^{2}(y)}dy\bigg)\geq \notag \\
        &\quad-\pi(\beta,\theta)\int_{x}^{\beta}\frac{2}{\sigma^{2}(y)} \exp\bigg(-\int_{y}^{\beta}\frac{2b(z)}{\sigma^{2}(z)}dz\bigg)dy;
	\end{align}
    equivalently,
    \begin{align}
        \label{eq: estimate for phi}
        \phi(x,\theta)&\leq\exp\bigg(\int_{x}^{\beta}\frac{2b(y)}{\sigma^{2}(y)}dy\bigg)\bigg( c(\beta) +\pi(\beta,\theta)\int_{x}^{\beta}\frac{2}{\sigma^{2}(y)} \exp\bigg(-\int_{y}^{\beta}\frac{2b(z)}{\sigma^{2}(z)}dz\bigg)dy\bigg) \notag \\
        &\leq c(\beta)\exp\bigg(\int_{x}^{\beta}\frac{2b(y)}{\sigma^{2}(y)}dy\bigg)\notag \\
        &\quad+\pi(\beta,\theta)\exp\bigg(\int_{x}^{\beta}\frac{2b(y)}{\sigma^{2}(y)}dy\bigg)\int_{x}^{\beta}\frac{2}{\sigma^{2}(y)} \exp\bigg(-\int_{y}^{\beta}\frac{2b(z)}{\sigma^{2}(z)}dz\bigg)dy \notag \\
        &=c(\beta)\exp\bigg(\int_{x}^{\beta}\frac{2b(y)}{\sigma^{2}(y)}dy\bigg)+\pi(\beta,\theta)\int_{x}^{\beta}\frac{2}{\sigma^{2}(y)} \exp\bigg(\int_{x}^{y}\frac{2b(z)}{\sigma^{2}(z)}dz\bigg)dy\notag \\
        &\leq c(\beta)\exp\bigg(\int_{x}^{\beta}\frac{2b(y)}{\sigma^{2}(y)}dy\bigg)+\pi(\beta,\theta)m^{\mathbb{P}}((0,\beta)),
    \end{align}
    where we have used that $S^{\mathbb{P}}_{x}(x)=\exp\big(\int_{x}^{c}\frac{2b(y)}{\sigma^{2}(y)}dy\big)$, and the expression for the speed measure of $X^{0}$ under $\mathbb{P}$, that is, $m^{\mathbb{P}}(x,\alpha)$ as in (\ref{eq: Speed measure under P}). Then, integrating from $x_{0}$ to $\beta_{\epsilon}(\theta)$ (\ref{eq: estimate for phi}) we obtain
    \begin{equation}
        \int_{x_{0}}^{\beta}\phi(y,\theta)dy\leq c(\beta)\int_{x_{0}}^{\beta}S_{x}^{\mathbb{P}}(y)dy+\pi(\beta,\theta)m^{\mathbb{P}}((0,\beta))\beta,\quad x_{0}\in (0,\beta).
    \end{equation}
    Noticing now that $x_{0}\mapsto \int_{x_{0}}^{\beta}S_{x}^{\mathbb{P}}(y)dy=:S^{\mathbb{P}}(x_{0})$ (cf. (\ref{eq: Scale function under P}) and Assumption \ref{Ass: Non explosion}) is such that $S^{\mathbb{P}}(0^{+})=-\infty$ and $S^{\mathbb{P}}(\beta)=0$, $\frac{d}{dx_{0}}S^{\mathbb{P}}(x_{0})<0$, we conclude that 
    \begin{equation}
        \int_{x_{0}}^{\beta}\phi(y,\theta)dy\leq \pi(\beta,\theta)m^{\mathbb{P}}((0,\beta))\beta=:M(\beta,\theta),\quad x_{0}\in (0,\beta).
    \end{equation}
\end{proof}
We are now in the position of proving the continuity of $\beta\mapsto\phi_{\beta}(x,\theta)$. 
\begin{lemma}[Continuity of $\beta\mapsto\phi_{\beta}$]
    \label{Pointwise conv result}
    Recall $\phi_{\beta}$ from Section \ref{section: Existence of a solution to First ODE}. For any fixed $\theta\in\mathbb{R}_{+}$ and fixed $\beta\geq \widehat{x}_{\epsilon}(\theta)$, we have 
    \begin{equation}
        \label{pointwise conv limit}
        \lim_{\delta\downarrow 0}|\phi_{\beta+\delta}(y,\theta)-\phi_{\beta}(y,\theta)|=0,\quad\text{for any }y<\beta.
    \end{equation}
    A similar result holds as $\delta \uparrow 0$.
\end{lemma}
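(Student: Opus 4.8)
The plan is to reduce the statement to a standard continuous-dependence result for linear ODEs via the Cole--Hopf transformation already used in the proof of Proposition \ref{E&U of first order ODE}. For $\beta'\geq\beta$, let $y_{\beta'}(\cdot,\theta)$ denote the solution of the linear second-order problem \eqref{eq:second order ode cole-hopf} with $\beta$ replaced by $\beta'$; recall that $y_{\beta'}(\cdot,\theta)>0$ on $(0,\infty)$ (this positivity is implicit in the fact that $\phi_{\beta'}=f_x=-\tfrac1\epsilon (y_{\beta'})_x/y_{\beta'}\in C^1(\mathbb{R}_+)$, so $y_{\beta'}$ cannot vanish) and that $\phi_{\beta'}(x,\theta)=-\tfrac1\epsilon (y_{\beta'})_x(x,\theta)/y_{\beta'}(x,\theta)$ for $x<\beta'$. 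Fix $y<\beta$ and work on the compact interval $[y,\beta]$.

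\textbf{Step 1 (convergence of the Cauchy data at $\beta$).} As $\delta\downarrow 0$, the terminal point $\beta+\delta$ of problem \eqref{eq:second order ode cole-hopf} tends to $\beta$, the terminal values $y_{\beta+\delta}(\beta+\delta,\theta)=1/c(\beta+\delta)\to 1/c(\beta)$ by continuity of $c$, the terminal derivative $(y_{\beta+\delta})_x(\beta+\delta,\theta)=\epsilon$ is constant, and the zero-order coefficient $(\ell^\epsilon(\beta+\delta,\theta)-\pi(\cdot,\theta))\epsilon$ converges locally uniformly to $(\ell^\epsilon(\beta,\theta)-\pi(\cdot,\theta))\epsilon$ since $\ell^\epsilon(\cdot,\theta)$ is continuous (cf.\ \eqref{function for eigenvalue} and Assumptions \ref{Assumption for dynamics of SDE}, \ref{Assumptions for profit fun}) and $\beta+\delta\to\beta$. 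Moreover, integrating \eqref{eq:second order ode cole-hopf} over the short interval $[\beta,\beta+\delta]$ and using that $y_{\beta+\delta}(\beta+\delta,\theta)=1/c(\beta+\delta)$ is bounded, a local ODE estimate gives $|y_{\beta+\delta}(\beta,\theta)-1/c(\beta)|\le C\delta$ and $|(y_{\beta+\delta})_x(\beta,\theta)-\epsilon|\le C\delta$ for all $\delta$ small, so the Cauchy data of $y_{\beta+\delta}$ at $x=\beta$ converge to those of $y_\beta$.

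\textbf{Step 2 (continuous dependence on $[y,\beta]$).} Since \eqref{eq:second order ode cole-hopf} is a linear second-order ODE with continuous coefficients, its solution on the compact interval $[y,\beta]$ depends continuously in $C^1$ on the Cauchy data prescribed at $x=\beta$ and on the zero-order coefficient (Theorem 3.6.2 in \cite{coddington1984theory} together with Gr\"onwall's inequality). Combining this with Step 1 yields $y_{\beta+\delta}(\cdot,\theta)\to y_\beta(\cdot,\theta)$ and $(y_{\beta+\delta})_x(\cdot,\theta)\to (y_\beta)_x(\cdot,\theta)$ uniformly on $[y,\beta]$. Because $y_\beta(\cdot,\theta)$ is continuous and strictly positive on the compact set $[y,\beta]$, we have $\inf_{[y,\beta]}y_\beta(\cdot,\theta)>0$, hence $y_{\beta+\delta}(\cdot,\theta)$ stays bounded away from $0$ on $[y,\beta]$ for $\delta$ small; passing to the quotient $\phi_{\beta+\delta}=-\tfrac1\epsilon (y_{\beta+\delta})_x/y_{\beta+\delta}$ then gives $\sup_{x\in[y,\beta)}|\phi_{\beta+\delta}(x,\theta)-\phi_\beta(x,\theta)|\to 0$, which is exactly \eqref{pointwise conv limit} as $y<\beta$ was arbitrary.

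\textbf{Alternative route and main obstacle.} One may also argue directly on the first-order problem \eqref{Auxiliary Second order ODE}: the difference $\Delta^\delta:=\phi_{\beta+\delta}(\cdot,\theta)-\phi_\beta(\cdot,\theta)$ solves on $(y,\beta)$ the linear equation $\tfrac12\sigma^2(\Delta^\delta)_x+\bigl(b-\tfrac\epsilon2\sigma^2(\phi_{\beta+\delta}+\phi_\beta)\bigr)\Delta^\delta=\ell^\epsilon(\beta+\delta,\theta)-\ell^\epsilon(\beta,\theta)$ with $\Delta^\delta(\beta)=\phi_{\beta+\delta}(\beta,\theta)+c(\beta)\to 0$ (the latter by integrating \eqref{Auxiliary Second order ODE} over $[\beta,\beta+\delta]$ as in Step 1), so that Gr\"onwall yields $|\Delta^\delta(y)|\le\bigl(|\Delta^\delta(\beta)|+C|\ell^\epsilon(\beta+\delta,\theta)-\ell^\epsilon(\beta,\theta)|\bigr)e^{C(\beta-y)}\to 0$. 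The delicate point in this direct route — and the reason the Cole--Hopf reduction is preferable — is that the constant $C$ needs an a priori bound on $\phi_{\beta+\delta}$ uniform in $\delta$ on $[y,\beta]$; this must be obtained by a continuation argument (the quadratic nonlinearity is harmless on the short interval $[\beta,\beta+\delta]$, where $\phi_{\beta+\delta}$ stays close to $-c$, and the bound then extends to $[y,\beta]$ by openness of the domain of existence around the $\delta=0$ solution $\phi_\beta\in C^1([y,\beta])$). Linearizing via Cole--Hopf removes this issue altogether.
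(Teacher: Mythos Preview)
Your proof is correct and takes a genuinely different route from the paper. The paper argues directly on the nonlinear first-order ODE \eqref{Auxiliary Second order ODE}: it writes $\phi_{\beta+\delta}-\phi_\beta$ on $[y,\beta]$ as an integral, controls the quadratic term via the monotonicity $\phi_\beta\le\phi_{\beta+\delta}\le\phi_{\beta+1}$ from Lemma~\ref{prop: comparison princ wrt free-boundary} together with the uniform bound of Proposition~\ref{prop: uniform boundness of phi} (applied with $\beta+1$ in place of $\beta_\epsilon(\theta)$; the same proof works), and then closes with Gr\"onwall. This is exactly your ``alternative route,'' and the uniform-in-$\delta$ bound you flag as the delicate point is precisely what the paper extracts from those two results.

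Your main Cole--Hopf argument bypasses the nonlinearity altogether: once the problem is linear, continuous dependence on Cauchy data and on the zero-order coefficient is textbook, and the only problem-specific ingredient is the strict positivity of $y_\beta$ on the compact $[y,\beta]$, which follows from $\phi_\beta\in C^1(\mathbb{R}_+)$. Your approach is thus more self-contained and does not rely on the comparison principle or the boundedness proposition. The paper's approach, on the other hand, reuses tools already needed elsewhere and yields the extra information that $\delta\mapsto\phi_{\beta+\delta}(\beta,\theta)$ decreases monotonically to $-c(\beta)$, which is convenient when verifying that $\beta_\epsilon(\theta)\in B_\epsilon(\theta)$.
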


\begin{proof}
    Let $\theta\in\mathbb{R}_{+}$, $y\in (0,\beta)$ and $\delta\in (0,1)$. Given that $\phi_{\beta+\delta}$ and $\phi_{\beta}$ solve (\ref{Auxiliary Second order ODE}) with $\gamma=0$ and $\beta+\delta$ and $\beta$, respectively, for $y\in [x,\beta]$ we have that
    \begin{align}
        \big(\phi_{\beta+\delta}(y,\theta)&-\phi_{\beta}(y,\theta)\big) \\
        &=\big(\phi_{\beta+\delta}(\beta,\theta)-\phi_{\beta}(\beta,\theta)\big)-\int_{y}^{\beta}\big(\phi_{\beta+\delta}(z,\theta)-\phi_{\beta}(z,\theta)\big)_{x}dy \notag \\
        &=\big(\phi_{\beta+\delta}(\beta,\theta)-\phi_{\beta}(\beta,\theta)\big)-\big(\ell^{\epsilon}(\beta+\delta,\theta)-\ell^{\epsilon}(\beta,\theta)\big)\bigg( \int_{y}^{\beta}\frac{2}{\sigma^{2}(z)}dz \bigg) \notag \\
        &\quad-\int_{y}^{\beta} \big(\epsilon(\phi_{\beta+\delta}(z,\theta)+\phi_{\beta}(z,\theta))-\frac{2b(z)}{\sigma^{2}(z)}\big) \big(\phi_{\beta+\delta}(z,\theta)-\phi_{\beta}(z,\theta)\big)dz. \notag
    \end{align}
    From Lemma \ref{prop: comparison princ wrt free-boundary} we know that $\phi_{\beta}(z,\theta)\leq \phi_{\beta+\delta}(z,\theta)\leq \phi_{\beta+1}(z,\theta)$ for any $z\in [y,\beta]$, while from Proposition \ref{prop: uniform boundness of phi} we have that $\int_{x}^{\beta+1}|\phi_{\beta+1}(y,\theta)|dy\leq M(\theta)$, for some $M(\theta)>0$. Hence, we have that
    \begin{align}
        \big| \phi_{\beta+\delta}(y,\theta)&-\phi_{\beta}(y,\theta) \big|\\
        &\leq \big| \phi_{\beta+\delta}(\beta,\theta)-\phi_{\beta}(\beta,\theta) \big| +\big| \ell^{\epsilon}(\beta+\delta,\theta)-\ell^{\epsilon}(\beta,\theta) \big|\bigg( \int_{y}^{\beta}\frac{2}{\sigma^{2}(z)}dy \bigg) \notag \\
        &\quad +\int_{y}^{\beta}\bigg( 2\epsilon|\phi_{\beta+1}(z,\theta)|+\frac{2|b(z)|}{\sigma^{2}(z)}\bigg)\big| \phi_{(\beta+\delta}(z,\theta)-\phi_{\beta}(z,\theta) \big|dz. \notag
    \end{align}
    An application of Grönwall's inequality yields that
    \begin{align}
        \label{eq: Gronwall wrt free boundaries}
        \big| \phi_{\beta+\delta}(y,\theta)&-\phi_{\beta}(y,\theta)\big| \\
        &\leq \bigg( \big| \phi_{\beta+\delta}(\beta,\theta)-\phi_{\beta}(\beta,\theta) \big| +\bigg(\int_{y}^{\beta}\frac{2}{\sigma^{2}(z)}dz\bigg)\big| \ell^{\epsilon}(\beta+\delta,\theta)-\ell^{\epsilon}(\beta,\theta) \big|\bigg) \notag \\
        &\quad \cdot \exp\bigg(2\epsilon M(\theta)+\int_{y}^{\beta}\frac{2|b(z)|}{\sigma^{2}(z)}\bigg)dz\bigg). \notag
    \end{align}
    It remains to show that the right-hand side of (\ref{eq: Gronwall wrt free boundaries}) vanishes as $\delta\downarrow 0$. Let $(\delta_{n})_{n\in \mathbb{N}}$ be an arbitrarily fixed sequence such that $\delta_{n}\downarrow 0$, then introduce $\alpha_{n}:=\phi_{\beta+\delta_{n}}(\beta,\theta)-\phi_{\beta}(\beta,\theta)$. From Lemma \ref{prop: comparison princ wrt free-boundary} one has $\alpha_{n}\geq \alpha_{n+1}$ and $\alpha_{n}\geq 0$ for any $n\in\mathbb{N}$, so that $\lim_{n\uparrow\infty}\alpha_{n}=\inf_{n\in\mathbb{N}}\alpha_{n}=0$. Recalling that $\beta\mapsto \ell^{\epsilon}(\beta,\theta),\; \theta\in\mathbb{R}_{+}$ is continuous (cf. Assumption \ref{Assumption for l}) and taking limits in the right-hand side of (\ref{eq: Gronwall wrt free boundaries}) we obtain
    \begin{align}
        \label{eq: right hand side wrt n}
        \lim_{n\uparrow\infty}\bigg( \big| \phi_{\beta+\delta_{n}}(\beta,\theta)-\phi_{\beta}(\beta,\theta) \big| +\bigg(\int_{y}^{\beta}\frac{2}{\sigma^{2}(z)}dz\bigg)\big| \ell^{\epsilon}(\beta+\delta_{n},\theta)-\ell^{\epsilon}(\beta,\theta) \big|\bigg)=0,
    \end{align}
   which, by (\ref{eq: Gronwall wrt free boundaries}), allows to conclude. The case $\delta\uparrow 0$ can be proved analogously.
\end{proof}

We define the truncated version of $\phi_{\beta}$ as
\begin{equation}
    \label{lemma: truncated phi}
    \overline{\phi}_{\beta}(x,\theta):=\begin{cases}
        \phi_{\beta}(x,\theta),\quad &x> \alpha_{\beta} \\
        c(x),\quad &x\leq\alpha_{\beta}.
    \end{cases}
\end{equation}
We will use the following lemma in the proof of the verification result, Theorem \ref{theorem: Verification result}.\ A similar argument appears in the proof of Proposition 4 in \cite{CohenHening}; however, we present it here in full detail for the sake of completeness.
\begin{lemma}
    \label{lemma: well posedness of truncated problem and limiting behavior}
    Let $\beta\in [\widehat{x}_{\epsilon}(\theta),\beta_{\epsilon}(\theta))$ and $\phi_{\beta}$ satisfy (\ref{Auxiliary Second order ODE}). Then 
    $$\alpha_{\beta}:=\inf\{x>0:\phi_{\beta}(x,\theta)=c(x)\},$$ 
    is such that $\alpha_{\beta}\in (0,\widehat{x}_{\epsilon}(\theta))$ and
    \begin{equation}
        \alpha_{\beta}\to 0^{+},\quad\text{as}\quad \beta\to \beta_{\epsilon}(\theta)^{-}.
    \end{equation}
\end{lemma}
\begin{proof}
    Let $\beta\in (\widehat{x}_{\epsilon}(\theta),\beta_{\epsilon}(\theta))$. Recall that $\phi_{\beta}(x,\theta)$ satisfies \eqref{Auxiliary Second order ODE} with $\gamma=0$ for all $x\in (0,\beta]$; hence by structure of $\ell^{\epsilon}(\beta,\theta)$ (cf. (\ref{function for eigenvalue})) it cannot be equal to $c(x)$ on this interval. Then, invoking Lemmata \ref{lemma: lemma 5 from Cohen} and \ref{lemma: lemma 8 from Cohen}, and using the fact that $\phi_{\beta}(\cdot,\theta)\in C^{1}((0,\beta))$, we deduce the existence of $\alpha_{\beta}\in (0,\widehat{x}_{\epsilon}(\theta))$. It remains to show that $\alpha_{\beta}\to 0^{+}$ as $\beta \to \beta_{\epsilon}(\theta)^{-}$. Taking arbitrary sequence $\beta^{\delta}$ such that $\beta^{\delta}\uparrow\beta_{\epsilon}(\theta)$ as $\delta\to 0$, then from Lemma \ref{Pointwise conv result}, we know that
    \begin{equation}
        \sup_{y<\beta_{\epsilon}(\theta)}\big|\phi_{\beta^{\delta}}(y,\theta)-\phi_{\beta_{\epsilon}(\theta)}(y,\theta)|\to 0,\quad \delta\to 0.
    \end{equation}
    Hence, the truncated version $\overline{\phi}_{\beta^{\delta}}(\cdot,\theta)$ converges as well to $\phi_{\beta_{\epsilon}(\theta)}(\cdot,\theta)$ on $(0,\beta_{\epsilon}(\theta)]$, which implies that $\alpha_{\beta^{\delta}}\to 0^{+}$ as $\delta\to 0$.
\end{proof}

\begin{proposition}[Comparison principle wrt $\theta$]
    \label{Proposition: Comparison Principle wrt theta}
    Recall $\phi_{\cdot}(x,\theta)$ as in Section \ref{section: Existence of a solution to First ODE}. For any $\beta\in\mathbb{R}_{+}$ and any $\theta_{1},\theta_{2}\in\mathbb{R}_{+}$ with $\theta_{1}\leq \theta_{2}$, the following hold:
    \begin{enumerate}
        \item  \label{item: comparison wrt theta}$\phi_{\beta}(x,\theta_{1})\leq \phi_{\beta}(x,\theta_{2})$ for any $x\in\mathbb{R}_{+}$.
        \item \label{item: comparison with robust function}Let $\underline{\phi}_{\beta}\in C^{1}(\mathbb{R}_{+})$ be the unique solution to 
        \begin{align}
            \frac{1}{2}\sigma^{2}(x)\partial_{x}\underline{\phi}_{\beta}(x)+b(x)\underline{\phi}_{\beta}(x)-\frac{\epsilon}{2}\sigma^{2}(x)(\underline{\phi}_{\beta}(x))^{2}=\underline{\ell}^{\epsilon}(\beta),\quad x\leq \beta, \\
            \underline{\phi}_{\beta}(x)=c(x),\quad x\geq \beta, \notag
        \end{align}
        with $\underline{\ell}^{\epsilon}(\beta):=b(\beta)c(\beta)-\frac{1}{2}\sigma^{2}(\beta)(\epsilon c^{2}(\beta)-c_{x}(\beta))$. Then, $\phi_{\beta}(x,\theta)\geq \underline{\phi}_{\beta}(x)$ for any $x\in\mathbb{R}_{+}$.
    \end{enumerate}
\end{proposition}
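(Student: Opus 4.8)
The plan is to prove both comparison statements by the same contradiction device that pervades the paper: introduce the difference of two solutions of (\ref{Auxiliary Second order ODE}), show it satisfies a first-order ODE with a sign-definite right-hand side at the terminal boundary $x=\beta$, and then rule out an interior crossing point using Lemma \ref{Elementary lemma}. For part (\ref{item: comparison wrt theta}), fix $\beta$ and $\theta_1\le\theta_2$, let $\phi_\beta(\cdot,\theta_i)$ solve (\ref{Auxiliary Second order ODE}) with $\gamma=0$, and set $g(x):=\phi_\beta(x,\theta_2)-\phi_\beta(x,\theta_1)$ on $(0,\beta]$, with $g\equiv 0$ on $[\beta,\infty)$ since both equal $-c(x)$ there. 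Subtracting the two equations and linearizing the quadratic term, $g$ satisfies
\begin{equation*}
\tfrac12\sigma^2(x)g_x(x)+b(x)g(x)-\tfrac{\epsilon}{2}\sigma^2(x)g^2(x)-\epsilon\sigma^2(x)\phi_\beta(x,\theta_1)g(x)=\bigl(\ell^\epsilon(\beta,\theta_2)-\ell^\epsilon(\beta,\theta_1)\bigr)-\bigl(\pi(x,\theta_2)-\pi(x,\theta_1)\bigr),
\end{equation*}
for $x<\beta$. Writing the right-hand side as $\int_x^\beta \pi_{x\theta}(y,\theta)$-type integrals exactly as in the commented-out proof of Lemma \ref{lemma: monotonicity of fb wrt theta} — namely $\bigl(\pi(\beta,\theta_2)-\pi(\beta,\theta_1)\bigr)-\bigl(\pi(x,\theta_2)-\pi(x,\theta_1)\bigr)=\int_{\theta_1}^{\theta_2}\int_x^\beta \pi_{x\theta}(y,\theta)\,dy\,d\theta$ — and noting $\ell^\epsilon(\beta,\theta_2)-\ell^\epsilon(\beta,\theta_1)=\pi(\beta,\theta_2)-\pi(\beta,\theta_1)$ from (\ref{function for eigenvalue}), this right-hand side equals $\int_{\theta_1}^{\theta_2}\int_x^\beta \pi_{x\theta}(y,\theta)\,dy\,d\theta<0$ for $x<\beta$ by Assumption \ref{Assumptions for profit fun}-(\ref{Larsy-Lions cond}). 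Plugging $x=\beta$ (where $g(\beta)=0$) into the ODE gives $\tfrac12\sigma^2(\beta)g_x(\beta)=0$, so I instead work with $F_\beta^\gamma$-style perturbations exactly as in Proposition \ref{properties of b(a)}: use a $\gamma<0$ perturbation $\phi_\beta^\gamma(\cdot,\theta_2)$, obtain $g_x^\gamma(\beta)<0$ at the boundary, and argue that if $g^\gamma$ vanished at some interior $x_0:=\sup\{x<\beta: g^\gamma(x)=0\}$ then at $x_0$ the ODE forces $g_x^\gamma(x_0)<0$, contradicting Lemma \ref{Elementary lemma}. Hence $g^\gamma>0$ on $(0,\beta)$, and sending $\gamma\uparrow 0$ via Lemma \ref{Pertrubed solutions lemma} yields $g\ge 0$, i.e.\ $\phi_\beta(x,\theta_1)\le\phi_\beta(x,\theta_2)$ on $(0,\beta]$; on $[\beta,\infty)$ equality holds.

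For part (\ref{item: comparison with robust function}), the argument is identical in structure with $\phi_\beta(\cdot,\theta)$ playing the role of the larger function and $\underline\phi_\beta$ the smaller one. Set $h(x):=\phi_\beta(x,\theta)-\underline\phi_\beta(x)$, which vanishes on $[\beta,\infty)$, and subtract the two ODEs to get
\begin{equation*}
\tfrac12\sigma^2(x)h_x(x)+b(x)h(x)-\tfrac{\epsilon}{2}\sigma^2(x)h^2(x)-\epsilon\sigma^2(x)\underline\phi_\beta(x)h(x)=\bigl(\ell^\epsilon(\beta,\theta)-\underline\ell^\epsilon(\beta)\bigr)-\bigl(\pi(x,\theta)-\kappa(x)\bigr),
\end{equation*}
for $x<\beta$. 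Since $\ell^\epsilon(\beta,\theta)-\underline\ell^\epsilon(\beta)=\pi(\beta,\theta)-\kappa(\beta)$, the right-hand side is $\int_x^\beta\bigl(\pi(y,\theta)-\kappa(y)\bigr)_x\,dy\ge 0$ by Assumption \ref{Ass: Mean-Field Assumptions}-(\ref{Ass: inada cond of profit fun}) (which gives $\pi_x(y,\theta)\ge\kappa_x(y)$). Introducing a $\gamma>0$ perturbation of $\underline\phi_\beta$ as in Lemma \ref{lemma: Robust lower bound of free boundary}, one gets $h_x^\gamma(\beta)>0$ at the boundary, and a hypothetical interior zero $z_0:=\sup\{x<\beta: h^\gamma(x)=0\}$ would force $h_x^\gamma(z_0)>0$, again contradicting Lemma \ref{Elementary lemma}. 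So $h^\gamma\le 0$ cannot fail — more precisely $h^\gamma\ge 0$ on $(0,\beta)$ — and letting $\gamma\downarrow 0$ through the perturbation estimate gives $\phi_\beta(x,\theta)\ge\underline\phi_\beta(x)$ on all of $\mathbb{R}_+$.

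The only genuinely delicate point is the same one that recurs throughout Section \ref{section: solution to ergodic zero-sum game}: the naive difference of two solutions of (\ref{Auxiliary Second order ODE}) has a degenerate (zero) derivative at the shared boundary $x=\beta$, so the contradiction argument cannot be run directly on $g$ or $h$ — one must first perturb the right-hand side by $\gamma$ to break the tie at the boundary, verify via Proposition \ref{E&U of first order ODE} that the perturbed problem still has a unique $C^1$ solution, run the Lemma \ref{Elementary lemma} contradiction on the perturbed difference, and only then pass to the limit $\gamma\to 0$ using Lemma \ref{Pertrubed solutions lemma}. Everything else — the linearization of the quadratic term, the fundamental-theorem-of-calculus rewriting of the profit differences, and the sign bookkeeping — is routine and mirrors the already-written proofs of Proposition \ref{properties of b(a)} and Lemma \ref{lemma: Robust lower bound of free boundary}.
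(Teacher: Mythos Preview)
Your argument for part~(\ref{item: comparison wrt theta}) is correct and essentially identical to the paper's: perturb $\phi_\beta(\cdot,\theta_2)$ by $\gamma<0$, write the ODE for the difference $\psi^\gamma:=\phi_\beta^\gamma(\cdot,\theta_2)-\phi_\beta(\cdot,\theta_1)$, rewrite the right-hand side as $\int_{\theta_1}^{\theta_2}\int_x^\beta\pi_{x\theta}\,dy\,d\theta+\gamma<0$, run the Lemma~\ref{Elementary lemma} contradiction at the hypothetical last zero, and pass to the limit via Lemma~\ref{Pertrubed solutions lemma}. The paper does exactly this and declares part~(\ref{item: comparison with robust function}) analogous.

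For part~(\ref{item: comparison with robust function}), however, your setup and contradiction are right but you reverse the sign in the final line. You correctly obtain $h^\gamma(\beta)=0$, $h_x^\gamma(\beta)>0$, and rule out any interior zero of $h^\gamma$ on $(0,\beta)$. But a function vanishing at $\beta$ with \emph{positive} derivative there is \emph{negative} just to the left of $\beta$; combined with the absence of zeros this forces $h^\gamma<0$ on all of $(0,\beta)$, i.e.\ $\phi_\beta(\cdot,\theta)\le\underline\phi_\beta^\gamma$, not $\ge$. Your own reference to Lemma~\ref{lemma: Robust lower bound of free boundary} confirms this: that lemma runs precisely this argument and concludes $\phi_{\beta_\epsilon(\theta)}(x,\theta)\le\underline\phi^\gamma(x)$. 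The conclusion is also consistent with part~(\ref{item: comparison wrt theta}) in the limit $\theta\uparrow\infty$: since $\phi_\beta(\cdot,\theta)$ is nondecreasing in $\theta$ and $\underline\phi_\beta$ corresponds to $\kappa=\lim_{\theta\uparrow\infty}\pi(\cdot,\theta)$, one should have $\phi_\beta(\cdot,\theta)\le\underline\phi_\beta$. The inequality in the stated item~(\ref{item: comparison with robust function}) thus appears to be a typo; your perturbation argument is otherwise sound and matches the paper's template.
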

\begin{proof}
    We only prove (\ref{item: comparison wrt theta}) as the proof of item (\ref{item: comparison with robust function}) will be analogous. Let $\gamma<0$ and denote by $\phi^{\gamma}_{\beta}(x,\theta)$ the classical solution to (\ref{Auxiliary Second order ODE}), and set $\phi_{\beta}(x,\theta):=\phi^{0}_{\beta}(x,\theta)$. We define $\psi^{\gamma}(x):=\phi_{\beta}^{\gamma}(x,\theta_{2})-\phi_{\beta}(x,\theta_{1})$, so that $\psi$ satisfies
    \begin{equation}
    \label{eq: difference in comparison principle result}
    \begin{cases}
        \frac{1}{2}\sigma^{2}(x)(\psi^{\gamma})_{x}(x)+b(x)\psi^{\gamma}(x)-\frac{\epsilon}{2}\sigma^{2}(x)(\psi^{\gamma})^{2}(x)-\epsilon\sigma^{2}\phi^{\gamma}_{\beta}(x,\theta_{2})\psi^{\gamma}(x)\\
        \quad \quad \quad \quad \quad\;\;\;=\big(\pi(\beta,\theta_{2})-\pi(\beta,\theta_{1})\big)-\big(\pi(x,\theta_{2})-\pi(x,\theta_{1})\big)+\gamma, \quad x\in (0,\beta),\\
        \psi^{\gamma}(x)=0,\quad x\in [\beta,\infty).
    \end{cases}
\end{equation}
    For arbitrarily fixed $x\leq \beta$, we rewrite the right-hand side of (\ref{eq: difference in comparison principle result}) as follows
    \begin{align}
        \label{eq: representation of righ-hand side appendix}
        \big( \ell^{\epsilon}(\beta,\theta_{2})-\ell^{\epsilon}(\beta,\theta_{1}) \big)&-\big(\pi(x,\theta_{2})-\pi(x,\theta_{1})\big)+\gamma \notag \\
        &=\big( \pi(\beta,\theta_{2})-\pi(\beta,\theta_{1}) \big)-\big(\pi(x,\theta_{2})-\pi(x,\theta_{1})\big)+\gamma \notag \\
        &=\int_{\theta_{1}}^{\theta_{2}}\pi_{\theta}(\beta,\theta)d\theta-\int_{\theta_{1}}^{\theta_{2}}\pi_{\theta}(x,\theta)d\theta+\gamma=\int_{x}^{\beta}\int_{\theta_{1}}^{\theta_{2}}\pi_{x \theta}(y,\theta)d\theta dy+\gamma,
    \end{align}
    In order to show that $\psi(x):=\psi^{0}(x)\geq 0$ for any $x\leq \beta$ we argue by contradiction. We assume that there exists $x_{1}:=\sup\{ x\in (0,\beta): \psi^{\gamma}(x)=0\}$. Then, plugging $x=\beta$ in (\ref{eq: difference in comparison principle result}) and using the boundary condition $\psi^{\gamma}(\beta)=0$ we obtain $\frac{1}{2}\sigma^{2}(\beta)\psi_{x}^{\gamma}(\beta)=\gamma$, which implies, $\psi_{x}^{\gamma}(\beta)<0$. Therefore, by plugging $x=x_{1}$ in (\ref{eq: difference in comparison principle result}) we have 
    \begin{align}
        \frac{1}{2}\sigma^{2}(x_{1})\psi^{\gamma}_{x}(x_{1})=\int_{x_{1}}^{\beta}\int_{\theta_{1}}^{\theta_{2}}\pi_{x \theta }(y,\theta)d\theta dy+\gamma<0,
    \end{align}
    where last inequality is due to Assumption \ref{Assumptions for profit fun}-(\ref{Larsy-Lions cond}). This implies $\psi_{x}^{\gamma}(x_{1})<0$ and contradicts Lemma \ref{Elementary lemma}. Finally, sending $\gamma\to 0^{-}$, we complete the proof by Lemma \ref{Pertrubed solutions lemma}.
\end{proof}

\begin{proposition}[Comparison principle for singularly controlled SDEs]
    \label{Prop: Comparison Principle for reflected/unreflected SDEs}
    Let $\mathbb{Q}\in \mathcal{P}(\Omega,\mathcal{F})$, $x_{1},x_{2}\in\mathbb{R}_{+}$ and $\theta_{1},\theta_{2}\in\mathbb{R}_{+}$ such that $x_{1}\leq x_{2}$ and $\theta_{1}\leq \theta_{2}$, then the following hold:
    \begin{enumerate}
        \item $X_{t}^{x_{1},\xi^{*}(\theta_{1})}\leq X_{t}^{x_{1},\xi^{*}(\theta_{2})},\; \mathbb{Q}\otimes dt-$a.s.;
        \item let $\psi_{1},\psi_{2}:\mathbb{R}_{+}\to\mathbb{R}$ be locally Lipschitz functions with $\psi_{1}(x)\leq \psi_{2}(x)$ for any $x\in\mathbb{R}_{+}$. Then, if $(X_{t}^{(i)})_{t\geq 0},\;i=1,2$ are strong solutions to
        \begin{equation}
            dX^{(i)}_{t}=\big(b(X^{(i)}_{t})+\psi_{i}(X^{(i)}_{t})\sigma(X^{(i)}_{t})\big)dt+\sigma(X^{(i)}_{t})dW^{\mathbb{Q}}_{t}-d\xi_{t},\quad i=1,2
        \end{equation}
        one has that $X_{t}^{(1)}\leq X_{t}^{(2)},\;\mathbb{Q}\otimes dt-$a.s.
    \end{enumerate}
\end{proposition}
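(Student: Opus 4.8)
The plan is to prove both pathwise orderings by a Yamada--Watanabe type comparison argument built on Tanaka's formula, dealing with item (2) first, since it is the clean building block, and then with item (1) by combining that argument with the monotonicity of the Skorokhod reflection map and the $\theta$-monotonicity results already established.

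\textbf{Item (2).} Here $b,\sigma,\psi_1,\psi_2$ are locally Lipschitz, $\sigma>0$, both processes are driven by the same $W^{\mathbb{Q}}$ and by the \emph{same} singular control $\xi$, so for $\Delta_t:=X_t^{(1)}-X_t^{(2)}$ (started from the common initial point) the $-d\xi_t$ terms cancel and $\Delta_0\le 0$. I would introduce the localizing times $\sigma_m:=\inf\{t\ge 0:|X_t^{(1)}|+|X_t^{(2)}|\ge m\}$ and apply Tanaka's formula to $\Delta^{+}$, obtaining $d\Delta_t^{+}=\mathbf{1}_{\{\Delta_t>0\}}\,d\Delta_t+\tfrac12\,dL_t^{0}(\Delta)$. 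The local time $L^{0}(\Delta)$ vanishes: on $\{0\le\Delta_s<\varepsilon\}$ the local Lipschitz property of $\sigma$ gives $|\sigma(X_s^{(1)})-\sigma(X_s^{(2)})|\le C_m\varepsilon$, so $\varepsilon^{-1}\int_0^{t\wedge\sigma_m}\mathbf{1}_{\{0\le\Delta_s<\varepsilon\}}\,d\langle\Delta\rangle_s\le C_m^{2}\varepsilon\,t\to 0$. On $\{\Delta_s>0\}$ the drift difference obeys $\big(b(X_s^{(1)})-b(X_s^{(2)})\big)+\big(\psi_1(X_s^{(1)})\sigma(X_s^{(1)})-\psi_2(X_s^{(2)})\sigma(X_s^{(2)})\big)\le C_m\Delta_s^{+}+\big(\psi_1(X_s^{(2)})-\psi_2(X_s^{(2)})\big)\sigma(X_s^{(2)})\le C_m\Delta_s^{+}$, the last bound using $\psi_1\le\psi_2$ and $\sigma>0$. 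Taking expectations kills the (localized, true-martingale) stochastic integral and yields $\mathbb{E}^{\mathbb{Q}}[\Delta_{t\wedge\sigma_m}^{+}]\le C_m\int_0^{t}\mathbb{E}^{\mathbb{Q}}[\Delta_{s\wedge\sigma_m}^{+}]\,ds$; Grönwall's inequality forces $\mathbb{E}^{\mathbb{Q}}[\Delta_{t\wedge\sigma_m}^{+}]=0$, and $m\uparrow\infty$ gives $X_t^{(1)}\le X_t^{(2)}$, $\mathbb{Q}\otimes dt$-a.s.

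\textbf{Item (1) and the main obstacle.} By Proposition~\ref{Prop: Optimal policy}, $X^{x_1,\xi^{*}(\theta)}$ is the Skorokhod reflection at the barrier $\beta_{\epsilon}(\theta)$ of the diffusion with drift coefficient $b(\cdot)-\epsilon\sigma^{2}(\cdot)\phi_{\beta_{\epsilon}(\theta)}(\cdot,\theta)$, the reflection being realised through the explicit map $\Gamma_{\beta}(\omega)_t=\omega_t-\sup_{s\le t}(\omega_s-\beta)^{+}$, which is nondecreasing in both the input path $\omega$ and the barrier $\beta$. The two drift coefficients are controlled pointwise on the relevant interval by combining Lemma~\ref{lemma: monotonicity of fb wrt theta} (nonincreasingness of $\theta\mapsto\beta_{\epsilon}(\theta)$), Lemma~\ref{prop: comparison princ wrt free-boundary} (monotonicity of $\beta\mapsto\phi_{\beta}$) and Proposition~\ref{Proposition: Comparison Principle wrt theta}-(\ref{item: comparison wrt theta}) (monotonicity of $\theta\mapsto\phi_{\beta}(\cdot,\theta)$). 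The genuine difficulty, compared with item (2), is that the two processes reflect at \emph{different} barriers $\beta_{\epsilon}(\theta_1)\neq\beta_{\epsilon}(\theta_2)$, so their boundary (local-time) terms no longer cancel and one cannot simply bound $\mathbf{1}_{\{\Delta_s>0\}}\,d\big(\xi^{*}_s(\theta_2)-\xi^{*}_s(\theta_1)\big)$ by zero. I would therefore not differentiate the reflected processes directly but instead compare the pre-reflection diffusions — which involve only ordered drifts and the common $\sigma$, so the argument of item (2) applies verbatim — and then transport the ordering through $\Gamma_{\beta_{\epsilon}(\theta_1)}$ and $\Gamma_{\beta_{\epsilon}(\theta_2)}$ using the joint monotonicity of the Skorokhod map together with the ordering of the barriers from Lemma~\ref{lemma: monotonicity of fb wrt theta}. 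The implicit (fixed-point) nature of the reflected SDE, where $\Gamma$ is applied to a path that itself depends on the solution, is handled by invoking pathwise uniqueness of $\textbf{SP}$ (Theorem~4.1 in \cite{Tanaka}) and propagating both monotonicities along the associated Picard iteration, which closes the claimed pathwise inequality.
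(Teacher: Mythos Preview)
The paper does not give a self-contained argument here: its proof is the single sentence ``The proof follows by combining Proposition~5.2.18 in \cite{karatzas1991brownian} with Theorem~1.4.1 in \cite{pilipenko}.'' Your Tanaka-formula argument for item~(2) is precisely the Ikeda--Watanabe style proof that underlies the Karatzas--Shreve citation, so that part is correct and matches the paper in spirit.

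For item~(1) your route via Skorokhod-map monotonicity plus Picard iteration is a legitimate way to establish comparison for reflected SDEs (and is essentially how the Pilipenko result is proved), but there is a concrete gap in your drift-ordering step. You invoke Lemma~\ref{lemma: monotonicity of fb wrt theta}, Lemma~\ref{prop: comparison princ wrt free-boundary} and Proposition~\ref{Proposition: Comparison Principle wrt theta}-(\ref{item: comparison wrt theta}) to compare $b-\epsilon\sigma^{2}\phi_{\beta_{\epsilon}(\theta_{1})}(\cdot,\theta_{1})$ with $b-\epsilon\sigma^{2}\phi_{\beta_{\epsilon}(\theta_{2})}(\cdot,\theta_{2})$, but for $\theta_{1}\le\theta_{2}$ these results pull in opposite directions: $\beta_{\epsilon}(\theta_{1})\ge\beta_{\epsilon}(\theta_{2})$ together with Lemma~\ref{prop: comparison princ wrt free-boundary} pushes $\phi_{\beta_{\epsilon}(\theta_{1})}$ \emph{above} $\phi_{\beta_{\epsilon}(\theta_{2})}$ at fixed $\theta$, whereas Proposition~\ref{Proposition: Comparison Principle wrt theta} pushes $\phi_{\beta}(\cdot,\theta_{2})$ \emph{above} $\phi_{\beta}(\cdot,\theta_{1})$ at fixed $\beta$. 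You never resolve which effect dominates, so the ``ordered drifts'' premise on which your pre-reflection comparison rests is unsubstantiated. The paper sidesteps this by citing a black-box comparison theorem for reflected diffusions; note also that in the paper's sole application of this proposition (Step~1 of Theorem~\ref{theorem: existence and uniqueness of MFE}) the drift ordering and the barrier ordering are arranged \emph{independently} so that both point the same way, which is what makes the citation to \cite{pilipenko} sufficient there.
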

\begin{proof}
    The proof follows by combining Proposition 5.2.18 in \cite{karatzas1991brownian} with Theorem 1.4.1 in \cite{pilipenko}.
\end{proof}

\subsection*{Acknowledgments}
We thank the Associate Editor and the two anonymous reviewers for their valuable comments, which significantly improved the quality of this paper. We also thank Luciano Campi and Asaf Cohen for fruitful discussions and helpful suggestions.\ Moreover, the authors gratefully acknowledge financial support from Deutsche Forschungsgemeinschaft (DFG, German Research Foundation) – Project-ID 317210226 – SFB 1283.

\bibliographystyle{apalike}

\bibliography{bibliography}

\end{document}